\documentclass[a4paper, reqno]{amsart}

\usepackage{a4wide}
\usepackage{graphicx,xcolor}
\usepackage[hidelinks]{hyperref}

\usepackage[shortlabels]{enumitem}
\setlist[enumerate,1]{label={\arabic*. }}

\usepackage{amsmath,amssymb}
\usepackage{dsfont} 
\usepackage{mathrsfs} 
\usepackage{esint} 
\usepackage{tikz}
\usepackage{overpic}

\usepackage[normalem]{ulem}

\newcommand{\dd}[1]{\mathrm{d}{#1}} 

\newcommand{\dist}{\mathrm{dist}}
\newcommand{\N}{\mathbb{N}}
\newcommand{\Z}{\mathbb{Z}}
\newcommand{\R}{\mathbb{R}}
\newcommand{\E}{\mathbb{E}}
\renewcommand{\P}{\mathbb{P}}
\newcommand{\Q}{\mathbb{Q}}
\newcommand{\eps}{\varepsilon}
\newcommand{\Rn}{\R^n}

\newcommand{\locdefect}[4]{\int_{#1} \frac{|#2(x) - #2(y)|^p}{|#3-#4|^{n+sp}} \, \dd{#3} \, \dd{#4}}
\newcommand{\wspseminorm}[2]{|#1|^p_{W^{s,p}(#2)}}
\newcommand{\wsp}[1]{W^{s,p}(#1)}
\def\Wsp{W^{s,p}}
\def\csp{\mathrm{cap}_{s,p}}
\newcommand{\Dsp}{\mathcal{D}_{s,p}}
\newcommand{\cspqe}{\csp\text{-q.e.}}
\newcommand{\cspqeon}{\cspqe\text{ on }}
\def\CSP{C_{s,p}}
\def\cspK{\mathrm{cap}_{\mathscr{K}}}
\def\CSPK{C_{\mathscr{K}}}
\def\Ksp{\dot{W}^{s,p}}
\newcommand{\DspK}{\mathcal{D}_{\mathcal{K}}}

\DeclareMathOperator\supp{supp}
 
\DeclareMathOperator{\lip}{Lip}

\providecommand{\norm}[1]{\lVert#1\rVert}

\let\implies\Rightarrow

\newtheorem{thm}{Theorem}[section]
\newtheorem{prop}[thm]{Proposition}
\newtheorem{lmm}[thm]{Lemma}

\theoremstyle{definition}

\newtheorem{claim}{Step}

\theoremstyle{remark}
\newtheorem{rmk}[thm]{Remark}

\numberwithin{equation}{section}

\usepackage[dvipsnames]{xcolor}
\definecolor{Mybgcolor}{HTML}{2E3440}
\definecolor{Mytextcolor}{HTML}{ECEFF4}
\definecolor{Mycommentcolor}{HTML}{88C0D0}
\definecolor{Mycommentcolor2}{HTML}{EBCB8B}
\definecolor{FDcolor}{HTML}{88C0D0}
\definecolor{MFcolor}{HTML}{0C63F0}

\renewcommand{\L}{\mathcal{L}}
\newcommand{\floor}[1]{\lfloor #1 \rfloor}

\newcommand{\ie}{\textit{i.e.}, }

\definecolor{MatplotlibC0}{HTML}{1f77b4}

\newcommand{\obstacle}{T}
\newcommand{\domain}{U}
\newcommand{\functional}{\mathcal{F}}
\newcommand{\functionalK}{\mathcal{K}}

\newcommand{\ballCriticalRadiusJI}{B_{\lambda_j \rho_i}(\eps_j x_i)}
\newcommand{\ballCriticalRadiusEpsIomega}{B_{\lambda_\eps \rho_i(\omega)}(\eps x_i(\omega))}

\newcommand{\ballDoubleCriticalRadiusEpsIomega}{B_{2\lambda_\eps \rho_i(\omega)}(\eps x_i(\omega))}
\newcommand{\ballDoubleCriticalRadiusEpsJ}{B_{2 \lambda_\eps \rho_j}(\eps x_j)}
\newcommand{\ballDoubleCriticalRadiusEpsJomega}{B_{2\lambda_\eps \rho_j(\omega)}(\eps x_j(\omega))}

\newcommand{\ballDoubleCriticalRadiusJI}{B_{2\lambda_j \rho_i}(\eps_j x_i)}
\newcommand{\ballContainingObstacleEpsIomega}{B_{\frac{\eps}{\truncParam}}(\eps x_i(\omega))}
\newcommand{\ballContainingObstacleEpsI}{B_{\frac{\eps}{\truncParam}}(\eps x_i)}
\newcommand{\diagonalDelta}{\Delta_\delta}
\newcommand{\borel}{\mathcal{B}} 
\newcommand{\IF}{\mathds{1}} 
\newcommand{\sampleSpace}{\Omega}
\newcommand{\sigmaAlgebra}{\mathscr{F}}
\newcommand{\prob}{\P}
\newcommand{\probSpace}{(\sampleSpace,\sigmaAlgebra,\mathbb{P})}
\newcommand{\capacityConstantErgodic}{\gamma}
\newcommand{\PP}{N} 
\newcommand{\PPg}{N_g} 
\newcommand{\probLawPP}{\mathcal{P}}
\newcommand{\EM}{M} 
\newcommand{\EMg}{M_g} 
\newcommand{\IG}{m_g} 
\newcommand{\MD}{\pi} 
\newcommand{\spaceOfMeasures}{\mathcal{N}}
\newcommand{\spaceOfMeasuresWithSimpleGround}{\widehat{\spaceOfMeasures}}
\newcommand{\eventSimpleGround}{\widehat{\sampleSpace}}

\newcommand{\truncParam}{R} 
\newcommand{\joiningLemmaEnergyDiffParam}{L} 
\newcommand{\annulusJI}{C_j^{i,h_j}}
\newcommand{\annulusJIomega}{C_j^{i,h_j^\omega}}
\newcommand{\annulusJK}{C_j^{k,h_j}}

\newcommand{\ballBoundingAnnulusJI}{\tilde{B}_j^{i,h_j}}
\newcommand{\ballBoundingAnnulusJK}{\tilde{B}_j^{k,h_j}}
\newcommand{\ballBoundingAnnulusJIomega}{\tilde{B}_j^{i,h_j^\omega}}
\newcommand{\ballContainingAnnulusJI}{B_j^{i,h_j}}
\newcommand{\ballContainingAnnulusJIomega}{B_j^{i,h_j^\omega}}
\newcommand{\limTruncParamJoiningLemmaParamJtoInfty}{\lim_{\truncParam\to\infty}\lim_{\joiningLemmaEnergyDiffParam\to\infty}\lim_{j\to\infty}}
\begin{document}

\newcommand{\goodEps}{G_{\eps,\truncParam}}
\newcommand{\goodEpsOmega}{\goodEps^\omega}
\newcommand{\veryGoodEps}{V\goodEps}
\newcommand{\veryGoodEpsOmega}{\veryGoodEps^\omega}
\newcommand{\notVeryGoodEps}{N\veryGoodEps}
\newcommand{\notVeryGoodEpsOmega}{\notVeryGoodEps^\omega}
\newcommand{\insideDomainEps}{I_\eps}
\newcommand{\insideDomainEpsOmega}{\insideDomainEps^\omega}
\newcommand{\thinnedInsideDomainEpsOmega}{\insideDomainEps^{2/\truncParam, \omega}}
\newcommand{\thinnedInsideDomainEps}{\insideDomainEps^{2/\truncParam}}
\newcommand{\obstacleEpsI}{\setOfObstaclesEps^i}
\newcommand{\obstacleEpsIOmega}{\setOfObstaclesEps^{i,\omega}}
\newcommand{\setOfObstaclesEps}{\obstacle_\eps}
\newcommand{\setOfObstaclesEpsOmega}{\setOfObstaclesEps^\omega}
\newcommand{\goodJ}{G_{j,\truncParam}}
\newcommand{\goodJOmega}{\goodJ^\omega}
\newcommand{\veryGoodJ}{V\goodJ}
\newcommand{\veryGoodJOmega}{\veryGoodJ^\omega}
\newcommand{\notVeryGoodJ}{N\veryGoodJ}
\newcommand{\notVeryGoodJOmega}{\notVeryGoodJ^\omega}
\newcommand{\insideDomainJ}{I_j}
\newcommand{\insideDomainJOmega}{\insideDomainJ^\omega}
\newcommand{\thinnedInsideDomainJOmega}{\insideDomainJ^{2/\truncParam, \omega}}
\newcommand{\thinnedInsideDomainJ}{\insideDomainJ^{2/\truncParam}}
\newcommand{\obstacleJI}{\setOfObstaclesJ^i}
\newcommand{\obstacleJIOmega}{\setOfObstaclesJ^{i,\omega}}
\newcommand{\setOfObstaclesJ}{\obstacle_j}
\newcommand{\setOfObstaclesJOmega}{\setOfObstaclesJ^\omega}

\newcommand{\errordRL}{\vartheta}

\newcommand{\intersectionOfEventsGivenBy}{Let $\sampleSpace' \in \sigmaAlgebra$ be the intersection of the events of probability one given by }
\newcommand{\toAvoidCumbersomeNotationOmegaOmitted}{To avoid cumbersome notation, $\omega$ will be omitted in the computations below. }
\newcommand{\subsequenceForWhichLimitsExist}{possibly taking a subsequence for which the limits below exist, }
\newcommand{\sureEvent}{There exists $\sampleSpace' \in \sigmaAlgebra$ with $\prob(\sampleSpace') = 1$ such that, for every $\omega \in \sampleSpace'$, }
\newcommand{\sureEventWithFollowingProperty}{There exists $\sampleSpace' \in \sigmaAlgebra$ such that $\prob(\sampleSpace') = 1$ with the following property. }

\title[Stochastic homogenization of fractional obstacle problems]
{Stochastic homogenization of fractional obstacle problems}
\author[F. Deangelis]{Francesco Deangelis}
\address[Francesco Deangelis]{Applied Mathematics M\"unster, University of M\"unster\\
	Einsteinstrasse 62, 48149 M\"unster, Germany}
\email{francesco.deangelis@uni-muenster.de}

\author[M. Focardi]{Matteo Focardi} 
\address[Matteo Focardi]{DiMaI U.\ Dini, Universit\`a di Firenze, V.le G.B. Morgagni 67/A, 50134 Firenze, Italy}
\email[Matteo Focardi]{matteo.focardi@unifi.it}

\author[C. I. Zeppieri]{Caterina Ida Zeppieri}
\address[Caterina Ida Zeppieri]{Applied Mathematics M\"unster, University of M\"unster\\
	Einsteinstrasse 62, 48149 M\"unster, Germany}
\email{caterina.zeppieri@uni-muenster.de}

\begin{abstract}
We prove a stochastic homogenization result for a class of \emph{nonlinear} and \emph{nonlocal} variational problems in domains with many small randomly distributed (bilateral) obstacles. Our model case is a Dirichlet problem for the \emph{fractional} $p$-Laplacian, $p>1$, where a pinning condition $u=0$ is imposed on the solution in a \emph{random} collection of small balls whose centers and radii are generated by a \emph{stationary marked point process}. Such a general obstacle distribution allows for \emph{clustering effects} to appear with positive probability. Under suitable moment conditions on the obstacle radii, we identify a critical scaling regime in which the fractional $p$-capacity density of the obstacles is asymptotically additive \emph{almost surely}.  In turn, this key property allows us to derive an effective homogenized problem which is formally analogous to the one obtained in the periodic setting or under the assumption of well-separation for the obstacles. 
The analysis also extends to the case of \emph{randomly shaped obstacles} and to a broad class of \emph{nonlocal interaction kernels}. 
At the methodological level, the paper develops a streamlined proof strategy with several new ingredients, among them the use of Palm measures.
\end{abstract}

\maketitle

{\small
	\noindent\keywords{\textbf{Keywords:} Stochastic homogenization, $\Gamma$-convergence, obstacle problems, nonlocal functionals, fractional capacity, stationary processes, marked point processes, Palm measures.}
	
	\smallskip
	
	\noindent\subjclass{\textbf{MSC 2020:} 49J45, 49J55, 35R11,  60G10, 60G55, 60G57, 74Q15.}
}	

\tableofcontents

\section{Introduction}

\subsection{Overview} In this paper we study the effective behavior of sequences of \emph{nonlinear} and \emph{nonlocal} Dirichlet problems posed in domains containing a diverging number of increasingly small \emph{randomly distributed} (bilateral) obstacles. 

To set up the framework, let $\Omega$ be the sample space of an underlying probability space and let $\eps>0$ be a small parameter. The prototypical problem we consider is 
\begin{equation}\label{intro:P_eps}
\begin{cases}
(-\Delta)^s_p u = f  & \text{in}\; U\setminus T^\omega_\eps, 
\cr 
u=0 & \text{on}\; \partial U \cup T^\omega_\eps,  
\end{cases}
\end{equation}
where $U\subset \R^n$ is open, bounded and Lipschitz,  $(-\Delta)^s_p$ denotes the (regional) fractional $p$-Laplacian of order $s$, with $p\in (1,\infty)$ and $s\in (0,1)$. In \eqref{intro:P_eps} the source term $f$ belongs to $L^{p'}(U)$ and, for $\omega\in \Omega$, $T_\eps^\omega$ denotes a realization of the \emph{random} obstacle set defined as 
\begin{equation}\label{intro:T_eps}
T_\eps=\bigcup \{B_{\lambda_\eps \rho} (\eps x) \colon (x,\rho) \in \supp N, \, x\in \eps^{-1}U\}.
\end{equation}
In \eqref{intro:T_eps} $\lambda_\eps>0$ is a secondary small scale satisfying $\lambda_\eps\ll \eps$, and $N$ is a \emph{marked point process}. More precisely, $N$ is a random counting measure of the form  
\begin{equation*}
    \PP = \sum_{i = 1}^{\infty} \delta_{(x_i,\rho_i)},
\end{equation*}
where $\{x_i\}_{i \in \N}$ is a sequence of $\R^n$-valued random variables and $\{\rho_i\}_{i \in \N}$ is a sequence of $\R_+$-valued random variables, representing the marks associated to the points $\{x_i\}_{i \in \N}$. 
Under \emph{minimal} structural assumptions on $N$, which in particular shall be \emph{stationary} and \emph{ergodic} with respect to shifts in $\R^n$ (see \eqref{eq: prob law invariant under shifts} and \eqref{eq: prob law ergodic under shifts} for the precise definitions), in this paper we prove that, when $sp\in (0,n)$, there is a critical scaling $\lambda_\eps$ for the obstacle radii such that, as $\eps\to 0$, the solutions to \eqref{intro:P_eps} converge \emph{almost surely} to the solution of the effective problem
\begin{equation}\label{intro:P_0}
\begin{cases}
(-\Delta)^s_p u + \gamma\, |u|^{p-2}u = f  & \text{in}\; U, 
\cr 
u=0 & \text{on}\; \partial U,  
\end{cases}
\end{equation}
where $\gamma>0$ is explicit and encodes the asymptotic geometry and distribution of the obstacles. More precisely, $\gamma$ represents the averaged limit density of the fractional $p$-capacity of the random obstacle set $T_\eps$, as we are going to explain in detail below. 
\subsection{A brief literature review}
The asymptotic behavior of solutions to local Dirichlet problems in periodically perforated domains (or in domains with obstacles) has been a very active research area since the seminal works of Marchenko and Khruslov \cite{marchenko-khruslov}, Rauch and Taylor \cite{rauch-taylor-1,rauch-taylor-2}, and Cioranescu and Murat \cite{cioranescu-murat-2,cioranescu-murat-1}.
From a mathematical point of view, these problems are interesting because, when the perforations are at a critical size, the limit equation shows an extra zero-order term, originally dubbed ``a strange term coming from nowhere'', which reflects the additive asymptotic behavior of the capacity of the homogenizing obstacles.

Several approaches have been developed over the years to study local problems posed in perforated domains; comprehensive treatments can be found in the monographs \cite{attouch,dal-maso-gamma-convergence,cioranescu-donato,braides-beginners,marchenko-khruslov-2,cioranescu-damlamian-griso}.  In particular, Dal Maso gave a complete variational solution \cite{dal-maso-obstacle-1,dal-maso-obstacle-2} by building on the $\Gamma$‑convergence framework previously introduced by De Giorgi, Dal Maso, and Longo \cite{de-giorgi-dal-maso-longo}.
In the 2000s the interest in the homogenization of obstacle problems resurfaced in the nonlocal community thanks to Caffarelli and Mellet \cite{caffarelli-mellet}, who were the first to study these problems for the fractional Laplacian. Their method relies on the celebrated Caffarelli-Silvestre extension \cite{caffarelli-silvestre}, which allows to reduce the problem posed for the fractional Laplacian to a local problem. Indeed, thanks to \cite{caffarelli-silvestre} the solution to the nonlocal problem in $\mathbb R^n$ can be interpreted as the boundary trace of the solution of a \emph{degenerate} but local elliptic equation in the higher-dimensional positive half-space $\mathbb{R}^{n+1}_{+}$.
Caffarelli and Mellet's proof is then based on the construction of oscillating test functions for degenerate elliptic PDEs and their homogenization result holds for ``well separated'' randomly shaped obstacles placed on a periodic lattice.

A genuinely nonlocal approach was later proposed by the second author by resorting to $\Gamma$-convergence techniques  combined with an ergodic theorem for multiparameter stationary processes \cite{focardi-aperiodic-fractional,focardi-vector-obstacle}. Circumventing the construction of the oscillating test functions, this variational approach applies to a broad class of nonlocal and nonlinear problems and easily extends beyond the periodic setting (see also \cite{focardi-fractional}). Moreover, an explicit expression for the limiting capacity density of the obstacles is obtained in \cite{focardi-aperiodic-fractional,focardi-vector-obstacle}, whereas it remained implicit in the extension-based works \cite{caffarelli-mellet,caffarelli-mellet-local}. We stress, however, that the analysis in \cite{focardi-aperiodic-fractional,focardi-vector-obstacle} strongly relies on the spatial distribution of the obstacles, in particular ruling out clustering effects. More precisely, in the works mentioned above the obstacles are required to be \emph{well separated}, a condition that enables local constructions as in the periodic setting.    

We also mention here the recent work \cite{Alicandro_et_al} where the authors study nonlocal variational problems of convolution type in periodically perforated domains, via $\Gamma$-convergence.  

On the other hand, in the local setting, Giunti, Höfer, and Velázquez \cite{giunti-hoefer-velazquez} significantly extended homogenization results for the Poisson equation in perforated domains by considering perforations generated by a stationary and ergodic marked point process as in \eqref{intro:T_eps}, under the assumption that both the centers and the radii exhibit short-range correlations. This framework allows the centers $\{x_i\}_{i \in \mathbb N}$ to be arbitrarily close and the radii $\{\rho_i\}_{i \in \mathbb N}$ to be arbitrarily large, so that clustering of perforations can occur with probability one (cf.\ \cite{grimmett,meester}).
Assuming finiteness of the average limit capacity density of the perforations (which, in view of the scaling properties of the elliptic capacity, amounts to a suitable moment condition on the radii) the authors prove that, for $\lambda_\eps = \eps^{n/(n-2)}$, a homogenization result analogous to those in \cite{cioranescu-murat-2,cioranescu-murat-1} holds. Their proof is based on the construction of oscillating test functions in the spirit of Cioranescu and Murat, with the main difficulty being to ensure that the presence of clusters does not break down this construction. The key observation in \cite{giunti-hoefer-velazquez} is that the imposed moment condition and the decorrelation assumption over large distances (formulated as a quantitative strong mixing condition) guarantee that, almost surely, clustering holes have asymptotically negligible capacity.

More recently, a nonlinear counterpart of this result was obtained under similar assumptions by Scardia, Zemas, and Zeppieri \cite{scardia-zemas-zeppieri} via a purely variational approach, thus providing the first homogenization result of obstacle problems for nonlinear elliptic PDEs (with variational structure) without the assumption of well-separation. 

\subsection{Main contributions of the paper and method of proof}  
In the present paper, we extend the homogenization results of \cite{giunti-hoefer-velazquez, scardia-zemas-zeppieri} to the nonlocal setting. More precisely, we establish a homogenization result for a broad class of nonlocal and nonlinear obstacle problems under minimal assumptions on the geometry and size of the obstacles. Our result also provides a substantial extension, in the nonlocal framework, of the homogenization results obtained in \cite{focardi-fractional, focardi-aperiodic-fractional}. Indeed, our approach does not rely on any separation assumption, thus accommodating genuinely irregular configurations of obstacles. Moreover, the analysis applies to a class of interaction kernels which is significantly larger than the one considered in \cite{focardi-aperiodic-fractional}.

In contrast to \cite{giunti-hoefer-velazquez, scardia-zemas-zeppieri}, we remove the short-range correlation assumption on the underlying marked point process, which is now only required to be stationary and ergodic (see also \cite{Bastug}) and to satisfy a suitable moment condition on the obstacle radii. Moreover, we cover also the case of randomly shaped obstacles (see also \cite{Sato}).
In the same spirit as \cite{focardi-aperiodic-fractional, scardia-zemas-zeppieri}, our approach is purely variational and avoids the delicate PDE techniques and regularity results employed in \cite{caffarelli-mellet}, which are moreover specific to the fractional Laplacian. This yields a robust and unified framework for the stochastic homogenization of a broad class of nonlocal and nonlinear obstacle problems.
In addition, the paper develops a streamlined proof strategy with several methodological simplifications and new ingredients with respect to \cite{giunti-hoefer-velazquez,scardia-zemas-zeppieri} (see the comments hereafter).

We now provide an overview of the main contribution of the paper and outline its proof strategy. For the sake of presentation, we focus on the model case of problem \eqref{intro:P_eps}, where the random obstacle set $T_\eps$ consists of a union of balls with random centers and radii, as defined in \eqref{intro:T_eps} (see Figure~\ref{fig: obstacle set}).
\begin{figure}
    \begin{overpic}[width=0.4\textwidth]{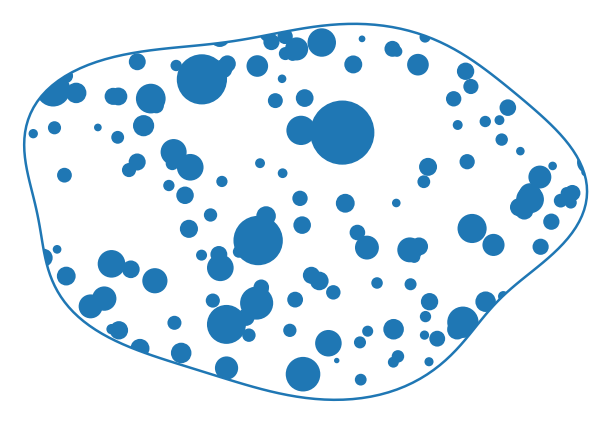}
        \put (80,10) {\color{MatplotlibC0}$\domain$}
    \end{overpic}
    \caption{In blue a realization of the random set $T_\eps$. (In the picture the centers of the balls are generated by a Poisson point process, while the radii are i.i.d. random variables with a log-normal distribution.)}
    \label{fig: obstacle set}
\end{figure}
To keep the presentation intuitive, we disregard technicalities and assume only that the marked point process $N$ is stationary and ergodic. We refer the reader to Section \ref{ss:main result}
for the complete list of assumptions, as well as for the treatment of the case in which ergodicity is relaxed
(cf. Remark \ref{thm: main result, nonergodic}). 

We observe that problem \eqref{intro:P_eps} has a variational structure. Indeed, for fixed $\eps$, the unique solution to \eqref{intro:P_eps} coincides with the minimizer in the fractional Sobolev space $\Wsp_0(\domain)$ of the functional   
\begin{equation}\label{intro:funct_compl} 
    \displaystyle 
    \int_{\domain\times \domain}\frac{|u(x)-u(y)|^p}{|x-y|^{n+sp}}\, \dd{x} \, \dd{y} -p\int_U fu \,\dd{x} +\chi_\eps^\omega(u),
\end{equation}
where, for every $\omega \in \Omega$, 
\[
\chi_\eps^\omega(u)=
\begin{cases}
0 & \text{if }\; u=0\,\, \text{ on } \setOfObstaclesEpsOmega \cap \domain,
\cr
+\infty & \text{otherwise.}
\end{cases}
\]
By the fundamental property of $\Gamma$-convergence, the convergence of the functionals in \eqref{intro:funct_compl} yields the convergence of the corresponding minimizers and hence of the solutions to \eqref{intro:P_eps}. Indeed, we note that in this setting the compactness of minimizing sequences directly follows from the Poincar\'e Inequality in $\Wsp_0(\domain)$. Moreover, since the linear term in \eqref{intro:funct_compl} can be treated as a continuous perturbation, it actually suffices to focus on the asymptotic behavior of    
\begin{equation}\label{intro:functional} 
    \functional_\eps^\omega(u) = \begin{cases} 
\displaystyle\int_{\domain\times \domain}\frac{|u(x)-u(y)|^p}{|x-y|^{n+sp}}\, \dd{x} \, \dd{y} & \text{ if } u\in \Wsp(\domain),\,
        {u}=0\,\, \text{ on } \setOfObstaclesEpsOmega \cap \domain,
        \cr
        +\infty & \text{otherwise in }\, L^p(U).
        \end{cases}
        \end{equation}
In the main result of the paper, Theorem~\ref{thm: main result}, we prove that for $sp \in (0,n)$, $\lambda_\eps=\eps^{n/(n-sp)}$, and under the moment condition 
 \begin{equation}\label{intro:moment}
    \E\bigg[\sum_{x_i\in Q} \rho_i^{n-sp}\bigg]<+\infty, 
    \end{equation}
where $Q$ is a unit cube in $\mathbb R^n$, the random functionals $\functional_\eps$ $\Gamma$-converge, \emph{almost surely}, to the deterministic functional given by
\begin{equation*}
        \functional(u) =\begin{cases} 
            \displaystyle\int_{\domain\times \domain}\frac{|u(x)-u(y)|^p}{|x-y|^{n+sp}}\, \dd{x} \, \dd{y} +\capacityConstantErgodic\int_{\domain}|u|^p\dd{x} & \text{if }\; u\in \Wsp(\domain),
            \cr
            +\infty & \text{otherwise in }\; L^p(\domain),
            \end{cases}
            \end{equation*}
where 
\begin{equation}\label{intro:gamma}
    \capacityConstantErgodic = \csp(B_1)\, \E\bigg[\sum_{x_i\in Q} \rho_i^{n-sp}\bigg].
\end{equation}
In \eqref{intro:gamma} $\csp(B_1)$ denotes the fractional $p$-capacity, or in short the $(s,p)$-capacity, of the unit ball (see Section \ref{ss:sobslob} for its definition). We observe that $\capacityConstantErgodic$ factorizes into the contribution of the local geometry (via the $(s,p)$-capacity of the fundamental obstacle shape) and that of the global distribution of the obstacles (via the $(n-sp)$-moment of the radii). A similar formula is obtained in 
\cite{giunti-hoefer-velazquez,scardia-zemas-zeppieri}. Moreover, we refer the reader to Remarks~\ref{rmk:On the assumptions}, \ref{r:Wald}, and \ref{thm: main result, nonergodic} for different characterizations of the capacitary constant $\capacityConstantErgodic$.
While the necessity of the moment condition \eqref{intro:moment} is immediate from \eqref{intro:gamma}, the main contribution of the present work is to show that \eqref{intro:moment} is also sufficient to establish a homogenization result analogous to that of \cite{focardi-aperiodic-fractional} in this more general setting.

In the same spirit as \cite{giunti-hoefer-velazquez}, a crucial step in the proof of the homogenization theorem is a decomposition result of the random obstacle set $T_\eps$. Roughly speaking, we decompose $T_\eps$ into the disjoint union of two sets $T_\eps^{g}$ and $T_\eps^{b}$, where $T_\eps^{g}$ consists of ``small'' and $\eps$-separated obstacles (the so-called \emph{good} obstacles), while  $T_\eps^{b}$ contains the remaining ones (the so-called \emph{bad} obstacles), in particular clusters (see \eqref{eq: definition of good indices}-\eqref{eq: definition of not very good indices} for the precise definitions). By construction, the obstacles in $T_\eps^{g}$ are also well separated from a suitable enlargement of the bad set $T_\eps^{b}$ (see \eqref{eq: very good enlarged obstacles and not very good obstacles with double radius are disjoint}). 
We then show that, almost surely and for $\eps$ sufficiently small, 
\[
\csp(T_\eps \cap Q) \simeq \csp(T_\eps^g \cap Q) + \csp(T_\eps^b \cap Q)
\]
and $\csp(T_\eps^b \cap Q) \to 0$ as $\eps \to 0$. Therefore, thanks to the $\eps$-separation of the obstacles in $T_\eps^g$, we obtain
\[
\lim_{\eps\to 0}\csp(T_\eps \cap Q) = \lim_{\eps\to 0}\csp(T_\eps^g \cap Q)=\csp(B_1)\, \lim_{\eps \to 0}\eps^n \sum_{\eps x_i\in Q} \rho_i^{n-sp}. 
\]
Above we have also used the choice $\lambda_\eps = \eps^{n/(n-sp)}$, together with the scaling property of the $(s,p)$-capacity, namely $\csp(B_{\rho})=\csp(B_1) \rho^{n-sp}$. Finally, exploiting the stationarity and ergodicity of $N$ together with \eqref{intro:moment}, invoking the ergodic theorem for marked point processes, we deduce that, almost surely,
\[
\lim_{\eps\to 0}\csp(T_\eps \cap Q) =\gamma.
\]
The above heuristic argument shows that the moment condition \eqref{intro:moment} is in fact sufficient to ensure that, even in the presence of clusters, when $\lambda_\eps$ is critical, (with probability one) the $(s,p)$-capacity density of the obstacle set $T_\eps$ is finite and asymptotically additive, as in the periodic setting.    

The proof of Theorem~\ref{thm: main result} builds on the argument outlined above, which, however, is not sufficient on its own to conclude. Indeed, both the lower and upper bound inequalities (cf.\ Proposition \ref{prop: gamma-liminf} and Proposition \ref{prop: gamma-limsup}, respectively) require two additional key ingredients: a nonlocal version of the so-called Joining Lemma (cf.\ \cite[Proof of Lemma~3.9]{focardi-aperiodic-fractional}) and a probabilistic discrete approximation of the capacitary term in the $\Gamma$-limit (cf.\ Proposition \ref{lmm: integral approximation G_j,R}).

More precisely, the Joining Lemma (originally proved in the local periodic setting by Ansini and Braides \cite{ansini-braides}) is reminiscent of a slicing and averaging argument due to De Giorgi. It allows us to identify suitable annular neighborhoods around the good obstacles where a sequence with equibounded energy can be modified so as to become locally constant without generating additional energy concentration. The resulting sequence can then be used as a competitor in the capacitary problem, and its appropriately chosen constant boundary values enable the reconstruction of the capacitary term in $\functional$ thanks to a refined version of the ergodic theorem for marked point processes that plays a crucial role (cf.\ Proposition \ref{prop: ergodic theorem mpp convergence of measures}).

To carry out this program, we introduce some technical simplifications and develop some methodological novelties with respect to the approaches in \cite{giunti-hoefer-velazquez,scardia-zemas-zeppieri}.
First, to make the application of the Joining Lemma possible we introduce a substantially simplified good/bad obstacles decomposition (see Section \ref{subsec:set-of-indices}), which yields the geometric separation properties required in the proof through a shorter argument than in \cite{giunti-hoefer-velazquez,scardia-zemas-zeppieri}. In particular, instead of relying on \cite[Lemma 4.2]{giunti-hoefer-velazquez}, we select the relevant obstacles by means of a simple truncation procedure depending on a deterministic parameter, which tends to $+\infty$ at the end of the argument. Second, we formulate the relevant discrete-to-continuum limit passages in terms of almost sure convergence of suitably rescaled counting measures (see \eqref{eq: rescaled measures}) associated with the underlying marked point process (see Sections \ref{sec: ergodic theorems} and \ref{sec: capacity term as limit of random sums}). This allows us to derive the asymptotic behavior of random sums from general ergodic principles and to recast the corresponding limit computations as convergence-to-integral statements. A key point here is that, rather than passing to the limit only at the level of the original marked point process, we work directly with the \emph{thinned process} (see Section \ref{sec: thinned process}) associated with the obstacle selection. The probabilistic analysis of this selected configuration relies on Palm measures (see Section \ref{sec: Palm}), which allow us to compute the expectation measure of the thinned process as a function of the truncation parameter. This in turn makes it possible to apply the ergodic theorem directly at the level of the selected obstacles.

While in the proof of the lower bound inequality the contribution of clusters can be neglected, the upper bound is more delicate, as one needs to show that the lower bound is optimal along a suitable recovery sequence. In particular, the recovery sequence must be constructed so as to satisfy the pinning condition also on the clustering obstacles. This construction presents difficulties which are, to some extent, analogous to those encountered in the construction of oscillating test functions, although no explicit solution of PDEs is required.
Finally, in the upper bound construction we show that the capacitary contribution is concentrated along the diagonal set in $\mathbb R^n \times \mathbb R^n$ and arises from short-range interactions,  whereas long range interactions contribute only to the nonlocal term in the $\Gamma$-limit. 

To conclude this introduction we note that the $\Gamma$-convergence result described above can be extended to more general interaction kernels and to obstacles with random shapes (cf.\ Theorem~\ref{thm: main result random shapes}). Moreover, the ergodicity assumption on the marked point process can be relaxed, in which case the limiting functional features a random capacitary term (cf.\ Remark \ref{thm: main result, nonergodic}).

\section{Preliminaries and notation}
\subsection{Basic notation}
In this subsection we introduce some useful notation. 
In all that follows $n \in \N \setminus \{0\}$. We define $\R_+ := [0,+\infty)$ and $Q := (0,1)^n$.
We use the standard notation 
$|\cdot|$ for the Euclidean norm on $\R^n$.
The open ball centered at $x \in \R^n$ and with radius $r>0$ is denoted by $B_r(x)$; \ie $B_r(x):= \{y \in \R^n: |y-x| < r\}$, while the corresponding closed ball is denoted by $\overline{B}_r(x)$, that is $\overline{B}_r(x):= \{y \in \R^n: |y-x| \leq r\}$. If $x=0$, we write simply $B_r$ instead of $B_r(0)$.
Given two sets $A, A' \subset \R^n$ with $A \subset \subset A'$, we say that $\varphi \in \lip(\R^n; [0,1])$ is a \textit{cutoff function} between $A$ and $A'$ if $\varphi = 1$ on $A$, $\varphi = 0$ on $\R^n \setminus A'$ and $\lip(\varphi) \leq 1/\dist(A,\partial A')$.

For a given set $A\subset \R^n$ the characteristic function of $A$ is denoted by $\mathds{1}_A$, that is 
\begin{equation*}
    \mathds{1}_A(x) := \begin{cases}
        1 \quad &\text{if } x \in A, \\
        0 \quad &\text{if } x \notin A.
    \end{cases}
\end{equation*}
Throughout the paper the parameter $\eps$ varies in a strictly decreasing sequence of positive real numbers converging to zero.
To avoid cumbersome notation, whenever we need to pick a sequence $\{\eps_j\}_{j \in \N} \downarrow 0$ , we will simply write $j$ instead of $\eps_j$, if $\eps_j$ appears as a subscript. For example, we will write $\insideDomainJ$, $\functional_j$, $u_j$ in place of $I_{\eps_j}$, $\functional_{\eps_j}$, $u_{\eps_j}$.

Let $\L^n$ denote the Lebesgue measure on $\R^n$. If $\domain$ is a Lebesgue-measurable subset of $\R^n$ and $u \in L^1(\domain)$, the integral average of $u$ on $\domain$ is denoted by $(u)_\domain$, that is $(u)_\domain := \fint_\domain u = \L^n(\domain)^{-1} \int_\domain u(x) \, \dd{x}$.

Let $\delta >0$; then $\diagonalDelta := \{(x,y) \in \R^n \times \R^n : |x-y| < \delta\}$ denotes the open $\delta$-neighborhood of the diagonal set in $\R^n \times \R^n$.

We use the standard notation $\lfloor \cdot \rfloor$ for the floor function; \ie $\lfloor t \rfloor:= \max\{m \in \Z: m \leq t\}$, for every $t \in \R$.

In all that follows we write $\lesssim_{M_1, M_2, \ldots}$ when an inequality holds up to a multiplicative constant depending on the parameters $M_1, M_2, \ldots$

\subsection{Fractional capacity}\label{ss:sobslob}
Throughout the paper, we assume that $p \in (1,\infty)$, $s \in (0,1)$ and $sp\in (0,n)$. Let $A \subset \R^n$ be open. We use the standard notation $\Wsp(A)$ for the Sobolev-Slobodeckij space, with norm $\norm{u}_{\Wsp(A)} := \norm{u}_{L^p(A)} + |u|_{\Wsp(A)}$, where 
$$
|u|_{\Wsp(A)}^p:=\int_{A\times A}\frac{|u(x)-u(y)|^p}{|x-y|^{n+sp}}\, \dd{x} \, \dd{y}
$$
denotes the usual fractional Gagliardo seminorm of $u$
to the $p$-power.

Let $\obstacle\subset \R^n$. The fractional capacity of $T$ is denoted by $\csp(\obstacle)$ and defined as
\begin{equation}\label{e:csp}
\csp(\obstacle):=\inf_{\{A\in\mathcal{A}(\R^n):\,A \supset \obstacle\}}
\inf\left\{|u|^p_{\Wsp(\R^n)}:\,
u\in\Wsp(\R^n),\,u\geq 1\, \L^n\text{-a.e. on } A\right\},
\end{equation}
where $\mathcal{A}(\R^n)$ stands for the collection of all open subsets of $\R^n$.

We recall that a property holds $\csp$ \emph{quasi everywhere} (in short, $\csp$-q.e.) on $A$, if it holds up to a set of $\csp$ zero. 
Moreover, every function $u$ in $\Wsp(A)$
has a \emph{precise representative} $\tilde{u}$ defined $\csp$-q.e. (see \cite{AdamsHedberg96, Warma2015}) 
and using the precise representative we can equivalently write
\begin{equation*}
  \csp(\obstacle)=\inf\left\{|u|_{\Wsp(\R^n)}^p:\,
u\in\Wsp(\R^n),\,\tilde{u}\geq 1\text{ q.e. on } \obstacle\right\}.
\end{equation*}

\noindent More generally, we shall deal with translation-invariant, even symmetric, homogeneous kernels $\mathscr{K}: \mathbb{R}^n \setminus \{0\} \to [0, \infty)$ 
that are comparable to the standard fractional kernel. Specifically, we assume that $\mathscr{K}$ is  $\L^n$-measurable and such that
for some $c > 0$, and for all $x \neq 0$ and $t \in\R\setminus\{0\}$, 
obeys to
\begin{equation}\label{e:Kernels}
\mathscr{K}(tx)=|t|^{-(n+sp)}\mathscr{K}(x),
\quad
c^{-1} \leq \mathscr{K}(x) |x|^{n+sp}\leq c\,.
\end{equation}
In particular, $\mathscr{K}$ is even symmetric and $-(n+sp)$-positive homogeneous.
Then consider the functional $\functionalK: L^p(\R^n)\times\mathcal{A}(\R^n) \longrightarrow [0, +\infty]$
\begin{equation}\label{e:functionalK}
 \functionalK(u,A):=\int_{A\times A}{\mathscr{K}(x-y)|u(x)-u(y)|^p}\, \dd{x} \, \dd{y}\,,
\end{equation}
and introduce the counterpart of $\csp$ associated with the kernel $\mathscr{K}$, namely
\begin{equation}\label{e:cspK}
\cspK(\obstacle):=\inf_{\{A\in\mathcal{A}(\R^n):\,A \supset \obstacle\}}
\inf\left\{\functionalK(u,\Rn):\,
u\in\Wsp(\R^n),\,u\geq 1\, \L^n\text{-a.e. on } A\right\}.
\end{equation}
One can easily verify that $\cspK$ is $(n-sp)$-homogeneous and comparable to $\csp$, namely
\begin{equation}\label{e:comparable-kernels}
c^{-1}\csp(\obstacle)\leq\cspK(\obstacle)\leq
c\,\csp(\obstacle),
\end{equation}
for every $\obstacle\subset\Rn$. 

For fixed $\obstacle\subset\R^n$, the existence and uniqueness of a function $u$ realizing $\cspK(T)$ is obtained in the 
homogeneous space $\Ksp(\Rn)=\{u\in L^{p^\ast}(\Rn):\, |u|_{\Wsp(\Rn)}<+\infty\}$,
$p^\ast:=\frac{np}{n-sp}$, appealing to the direct methods of the Calculus of Variations and using the strict convexity of the
$\mathcal{K}$ and the fact that the set $\{u\in\Ksp(\R^n):\,\tilde{u}\geq 1\text{ q.e. on }\obstacle\}$ is
convex and strongly closed. In what follows such a function is referred to as the $\mathscr{K}$-\emph{capacitary potential} for $T$.

For later use, it is also convenient to recall two different notions of relative fractional capacities introduced in \cite[Section~2.4]{focardi-aperiodic-fractional}.
Namely, let $0<r\leq R$, and for $T\subset B_r$ define
\begin{equation*}
    \cspK(\obstacle,B_R;r):=\inf\left\{\functionalK(u,B_R):\, 
    u\in\Wsp(\R^n),\, u=0 \text{ on }\R^n\setminus \overline{B}_r,\,
    \tilde{u}\geq 1\text{ q.e. on } \obstacle\right\},
\end{equation*}
and
\begin{equation*}
    \CSPK(\obstacle,B_R):=\inf\left\{\functionalK(u,\R^n):\,
    u\in\Wsp(\R^n),\, u=0 \text{ on }\R^n\setminus \overline{B}_R,\,
    \tilde{u}\geq 1\text{ q.e. on } \obstacle\right\}.
\end{equation*}
In fact, the former quantity is useful in the proof of the lower-bound inequality (cf. Proposition~\ref{prop: gamma-liminf}), 
while the latter is used in the proof of the upper bound inequality (cf. Proposition~\ref{prop: gamma-limsup}). 
In particular, by the very definitions it is easy to deduce that 
\begin{equation}\label{e:max cap}
\max\{\cspK(\obstacle),\cspK(\obstacle,B_R;r)\}\leq\CSPK(\obstacle,B_R)
\end{equation}
for every $\obstacle\subset B_r$, $0<r<R$.

In what follows we shall prove the uniform convergence of the above defined relative capacities to the global one.
To this aim it is convenient to introduce some notation to simplify the calculations below: 
for any $\L^n$-measurable function $w$ and any $\L^{2n}$-measurable 
subset $E$ of 
$\R^n\times\R^n$ we define the \textit{locality defect} of the functional $\mathcal{K}$ as 
\begin{equation*}
    \DspK(w,E) := \int_E \mathscr{K}(x-y)|w(x)-w(y)|^p \, \dd{x} \, \dd{y}. 
\end{equation*}
Hence, by definition and being $\mathscr{K}$ even symmetric, for any pair of disjoint $\L^n$-measurable subsets $A$ and $A'$ of $\R^n$, we have
\begin{equation}\label{e:locality defect}
   \mathcal{K}(w,A\cup A') = \mathcal{K}(w,A) + \mathcal{K}(w,A') + 2 \DspK(w,A \times A').
\end{equation}
The next result establishes some useful properties of the relative capacities introduced above. 
It extends \cite[Lemma~2.12]{focardi-aperiodic-fractional} proved for the standard kernel $|\cdot|^{-(n+sp)}$ to the more general class defined in \eqref{e:Kernels}. 
The strategy of proof is that of \cite[Lemma~2.12]{focardi-aperiodic-fractional} with the additional insight that the $\mathscr{K}$-capacitary potential of the ball is vanishing at infinity, as a consequence of the regularity theory developed in \cite{DCKP14}.
In what follows it is useful to notice that if $x\in B_R$ 
  \begin{equation}\label{e:stima Adams}
\int_{B_R^c}\frac 1{|x-y|^{n+sp}}\dd{y}
\lesssim_{n,s,p}\dist^{-sp}(x,\partial B_R)\,.
  \end{equation}
The latter estimate is a consequence of a direct integration using polar coordinates
(see \cite[Lemma~A.1]{focardi-aperiodic-fractional}).

\begin{lmm}\label{l:loccap}
Let $\mathscr{K}: \mathbb{R}^n \setminus \{0\} \to [0, \infty)$ be $\L^n$ measurable and satisfy \eqref{e:Kernels}.
For every $\rho>0$ it holds 
\begin{equation}\label{e:cap1}
\lim_{r\to+\infty}\sup_{T\subseteq B_\rho}|\CSPK(T,B_r)-\cspK(T)|=0.
\end{equation}
Moreover, 
for every $0<\rho<r<R$
\begin{equation}\label{e:cap2}
\sup_{T\subseteq B_\rho}\left(\cspK(T)-\cspK(T,B_R;r)\right)\lesssim_{n,s,p}
\frac{r^{sp}}{(R-r)^{sp}}\CSPK(B_\rho,B_r).
\end{equation}
In addition, if $R(r)>0$ is such that $R(r)/r\to+\infty$ as $r\to+\infty$, then 
\begin{equation}\label{e:cap3}
\lim_{r\to+\infty}\sup_{T\subseteq B_\rho}|\cspK(T)-\cspK(T,B_{R(r)};r)|=0\,,
\end{equation}
and 
\begin{equation}
    \label{eq: locality defect on B(r) x B(r)^c}
    \lim_{r\to+\infty}\sup_{T\subseteq B_\rho}\DspK(\xi_r^T,B_{R(r)}\times {B}_{R(r)}^c)=0,
\end{equation}
for every function $\xi_r^T$ such that 
\begin{equation}\label{e:almost-min}
\mathcal{K}(\xi_r^T,\Rn)\leq \CSPK(T,B_r)+\frac 1r.
\end{equation}
\end{lmm}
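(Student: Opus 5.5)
The plan is to handle the four assertions in order, using three tools: the trivial comparison \eqref{e:max cap}, a truncation argument built on the $\mathscr{K}$-capacitary potential of the ball $B_\rho$, and the splitting identity \eqref{e:locality defect} together with the tail estimate \eqref{e:stima Adams}.

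\emph{Proof of \eqref{e:cap1}.} By \eqref{e:max cap}, $\CSPK(T,B_r)\geq \cspK(T)$, so only an upper bound uniform in $T\subseteq B_\rho$ is needed.

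1. Let $u_T$ denote the $\mathscr{K}$-capacitary potential of $T$ and $u_\rho$ that of $B_\rho$. Replacing $u_T$ by $\min(u_T,u_\rho)$ is admissible for $T$. Using the pointwise inequality for $\min$ together with the minimality of both potentials, strict convexity gives $0\le u_T\le u_\rho$. Note also that $0\le u_T\le 1$ by truncation.

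2. Set $\eta_r:=\sup_{\Rn\setminus B_r}u_\rho$. The key input is $\eta_r\to0$ as $r\to\infty$. This is the "vanishing at infinity" of the ball potential mentioned before the statement. I would obtain it from the local boundedness and H\"older regularity in \cite{DCKP14}, combined with $u_\rho\in L^{p^\ast}$. Radial symmetry of $u_\rho$, which follows from uniqueness and rotation invariance of the problem, reduces this to a one-dimensional decay statement.

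3. Consider $w:=(u_T-\eta_r)^+/(1-\eta_r)$. It vanishes outside $B_r$, satisfies $\tilde w\ge1$ q.e. on $T$, and, since $t\mapsto (t-\eta_r)^+$ is $1$-Lipschitz,
\[
\functionalK(w,\Rn)\le (1-\eta_r)^{-p}\cspK(T).
\]

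4. Hence $0\le\CSPK(T,B_r)-\cspK(T)\le \big((1-\eta_r)^{-p}-1\big)\cspK(B_\rho)$, which tends to $0$ uniformly in $T\subseteq B_\rho$.

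I expect step 2 to be the main obstacle. If it is delicate, a fallback is a cutoff argument: take $\varphi$ between $B_{r/2}$ and $B_r$ and use the competitor $\varphi u_T$, whose error terms are controlled by $\int|u_\rho|^p\,r^{-sp}$ on the annulus. This again needs decay of $u_\rho$, but only in an integrated form.

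\emph{Proof of \eqref{e:cap2}.}

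1. Take $v$ admissible for $\cspK(T,B_R;r)$. By truncation, $0\le v\le 1$. By testing with the minimizer of $\CSPK(B_\rho,B_r)$, which is admissible for $T\subseteq B_\rho$, we may assume $\functionalK(v,B_R)\le \CSPK(B_\rho,B_r)$ up to an arbitrarily small error.

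2. Since $v=0$ outside $\overline B_r$, formula \eqref{e:locality defect} and \eqref{e:stima Adams} give
\[
\cspK(T)\le\functionalK(v,\Rn)\le\functionalK(v,B_R)+C\int_{B_r}|v|^p\dist^{-sp}(x,\partial B_R)\,\dd{x}\le \functionalK(v,B_R)+\frac{C}{(R-r)^{sp}}\int_{B_r}|v|^p .
\]

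3. It remains to bound $\int_{B_r}|v|^p\lesssim r^{sp}\,\CSPK(B_\rho,B_r)$. I would use a fractional Poincar\'e inequality for functions vanishing outside $B_r$, applied with the energy computed on $B_R$. Scaling from $B_1$ handles the case $R\ge 2r$. When $R<2r$, I would absorb the tail term using $\functionalK(v,\Rn)$ instead, or reduce to the previous case with a constant depending only on $n,s,p$.

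4. Taking the infimum over $v$ gives \eqref{e:cap2}.

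\emph{Proof of \eqref{e:cap3}.} This follows from \eqref{e:cap2} together with the opposite inequality $\cspK(T,B_R;r)\le\CSPK(T,B_r)$ and \eqref{e:cap1}. One also needs $\CSPK(B_\rho,B_r)\le\CSPK(B_\rho,B_{2\rho})$, which is bounded, and $r^{sp}/(R(r)-r)^{sp}\to0$.

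\emph{Proof of \eqref{eq: locality defect on B(r) x B(r)^c}.}

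1. Since $\xi_r^T$ vanishes outside $B_r$, it is admissible for $\cspK(T,B_{R(r)};r)$, so $\functionalK(\xi_r^T,B_{R(r)})\ge\cspK(T,B_{R(r)};r)$.

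2. By \eqref{e:locality defect} and \eqref{e:almost-min},
\[
2\DspK(\xi_r^T,B_{R(r)}\times B_{R(r)}^c)\le \CSPK(T,B_r)+\tfrac1r-\cspK(T,B_{R(r)};r).
\]

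3. The right-hand side tends to $0$ uniformly in $T\subseteq B_\rho$ by \eqref{e:cap1} and \eqref{e:cap3}.
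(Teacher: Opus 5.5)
Your plan follows the paper's proof in all four parts, and there is no structural gap. You truncate the capacitary potential $u_T\le u_\rho$ at the level $u_\rho$ reaches outside $B_r$. You combine the splitting \eqref{e:locality defect} with \eqref{e:stima Adams} and a Poincar\'e bound. You then use the same combinations as the paper for \eqref{e:cap3} and \eqref{eq: locality defect on B(r) x B(r)^c}. Two of your justifications are wrong as written, however.

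First, $u_\rho$ is not radially symmetric in general. $\mathscr{K}$ is only even and $-(n+sp)$-homogeneous, and it may be anisotropic. So the problem is invariant under $x\mapsto -x$ but not under rotations, and you should drop that reduction. The decay $\eta_r\to0$ follows instead from the local boundedness estimate with tail of \cite{DCKP14}, applied on $B_{|x_0|/2}(x_0)$, where $u_\rho$ is a solution. Using $0\le u_\rho\le 1$ and H\"older, both the tail term and the $L^p$-average there are bounded by $\|u_\rho\|_{L^{p^\ast}}$ times negative powers of $|x_0|$.

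Your fallback is in fact a complete alternative that avoids the decay result the paper cites, provided you bound its error correctly. For $\varphi u_T$ the error is $\lesssim r^{-sp}\int_{B_{2r}}u_\rho^p+r^n\int_{\Rn\setminus B_{2r}}u_\rho^p|y|^{-n-sp}\,\dd{y}$. It is not an annulus integral: the weight $\int|\varphi(x)-\varphi(y)|^p\mathscr{K}(x-y)\,\dd{x}$ is of order $r^{-sp}$ on all of $B_{2r}$. Both terms vanish as $r\to\infty$ using only $u_\rho\in L^{p^\ast}$: split off a fixed ball and apply H\"older, noting $|B_{2r}|^{1-p/p^\ast}\sim r^{sp}$. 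Uniformity in $T$ comes from $u_T\le u_\rho$.

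Second, in \eqref{e:cap2} the case $r<R<2r$ cannot be handled by absorbing the tail. For $sp<1$, $\IF_{B_r}$ is admissible for $\cspK(T,B_R;r)$ and $\functionalK(\IF_{B_r},B_R)\to0$ as $R\downarrow r$. Hence $\functionalK(v,B_R)$ gives no control on $\int_{B_r}|v|^p$ there. The case is simply trivial: the left-hand side is at most $\cspK(T)\le\CSPK(B_\rho,B_r)$, while $r^{sp}/(R-r)^{sp}>1$.

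For $R\ge 2r$, your Poincar\'e inequality with the energy on $B_R$ is the correct one. The paper's proof bounds $\int_{B_r}|u|^p$ by $r^{sp}\functionalK(u,B_r)$ at that step, which fails for $sp<1$ by the same example. So your case split is actually the cleaner way to obtain \eqref{e:cap2}.
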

\begin{proof}
Let $\rho>0$, $T\subseteq B_\rho$, $r>\rho$ be fixed.

Let us first prove \eqref{e:cap1}. To this aim let $u^T$, $u^{B_\rho}\in\Ksp(\Rn)$ be the 
$\mathscr{K}$-capacitary potentials of $T$ and $B_\rho$, respectively; then $0\leq u^T\leq u^{B_\rho}\leq 1$
$\L^n$ a.e. on $\R^n$ by \cite[Lemma~2.11]{focardi-aperiodic-fractional}. In addition, $u^{B_\rho}$ satisfies 
\begin{equation}
    \lim_{|x|\to\infty}u^{B_\rho}(x)=0
\end{equation}
in view of the results contained in \cite{FPZ26}.

With fixed $\delta>0$ consider the Lipschitz map
$\psi_\delta(t):=\frac{t-\delta}{1-\delta}\vee 0$, 
$\mathrm{Lip}(\psi_\delta)\leq(1-\delta)^{-1}$, and set
$w_\delta(x):=\psi_\delta(u^T(x))$. 
Up to $\L^n$ negligible sets, $\{w_\delta>0\}=\{u^T>\delta\}\subseteq
\{u^{B_\rho}>\delta\}\subseteq B_{R_\delta}$, for some $R_\delta\to+\infty$
as $\delta\to 0^+$. Then $w_\delta\in\Wsp(\Rn)$ with 
\[
\mathcal{K}(w_\delta,\Rn)\leq\frac{1}{(1-\delta)^p}\mathcal{K}(u^T,\Rn)=
\frac{1}{(1-\delta)^p}\cspK(T),\quad
\|w_\delta\|_{L^p(\Rn)}\leq\frac{1}{1-\delta}\|u^T\|_{L^p(B_{R_\delta})}.
\]
Moreover, $\tilde{w}_\delta\geq 1$ q.e. on $T$, and being $\cspK(\cdot)$ an increasing set function, we conclude that
\[
0\leq\CSPK(T,R_\delta)-\cspK(T)\leq\left(\frac{1}{(1-\delta)^p}-1\right)
\cspK(T)\leq\left(\frac{1}{(1-\delta)^p}-1\right)\cspK(B_\rho).
\]
In conclusion, \eqref{e:cap1} follows since 
$(0,+\infty)\ni r\to\CSPK(T,B_r)$ is monotone decreasing.

To prove estimate \eqref{e:cap2}, let $u\in\Wsp(\R^n)$ be any admissible function 
for the minimum problem defining $\csp(T,B_R;r)$, then $u$ is also admissible 
for the one defining $\csp(T)$. Using the estimate in \eqref{e:stima Adams} we have
\begin{eqnarray*}
\lefteqn{\cspK(T)\leq\mathcal{K}(u,\Rn)\stackrel{\eqref{e:locality defect}}{=}
\mathcal{K}(u,B_R)+\mathcal{K}(u,B_R^c)+2\DspK(u,B_R\times B_R^c)}\\&&
\stackrel{u|_{B_r^c}=0}{=}
\mathcal{K}(u,B_R)+2\int_{B_r}|u(x)|^p\int_{B^c_R}\mathscr{K}(x-y)\dd{y}\dd{x}\\&&
\stackrel{\eqref{e:Kernels},\eqref{e:stima Adams}}{\lesssim_{n,s,p}}
\mathcal{K}(u,B_R)+c \int_{B_r}\frac{|u(x)|^p}{\dist^{sp}(x,\partial B_R)}dx\\
&&\leq\mathcal{K}(u,B_R)+\frac{c }{(R-r)^{sp}}\int_{B_r}|u(x)|^pdx
\lesssim_{n,s,p}\mathcal{K}(u,B_R)+\frac{c  r^{sp}}{(R-r)^{sp}}\mathcal{K}(u,B_r)\,,
\end{eqnarray*}
where in the last inequality we used the scaled version of the Poincaré Inequality on balls and again \eqref{e:Kernels}.
By passing to the infimum over the admissible test functions we infer
\[
\cspK(T)-\cspK(T,B_R;r)\leq\frac{c  r^{sp}}{(R-r)^{sp}}\cspK(T,B_R;r)\leq
\frac{c  r^{sp}}{(R-r)^{sp}}\CSPK(T,B_r).
\]
We deduce statement \eqref{e:cap2} since $\CSPK(\cdot,B_r)$
is a monotone increasing set function.

The limit in \eqref{e:cap3} follows at once from formulas \eqref{e:cap1}, \eqref{e:cap2}, and the inequality in \eqref{e:max cap}.

Finally, let $\xi_r^T$ satisfy \eqref{e:almost-min}. Then $\xi_r^T$ is admissible for the problem defining
$\cspK(T,B_R;r)$, so that for all $r<R$ we get
\[
\cspK(T,B_R;r)\leq\mathcal{K}(\xi_r^T,B_R)\leq \CSPK(T,B_r)+\frac 1r\,.
\]
Therefore, recalling the definition of $\DspK$, by combining \eqref{e:cap1} and \eqref{e:cap3} 
we get \eqref{eq: locality defect on B(r) x B(r)^c}.
\end{proof}

\subsection{Marked point processes}\label{ss:mpp}
For a comprehensive treatment of marked point processes we refer the reader to the monographs \cite{daley-vere-jones-1, daley-vere-jones-2}. Below we recall some notions and results which are relevant to the problem under examination. 

\subsubsection{Basic definitions}\label{sec: mpp, basic definitions}
Let $X$ be a topological space. The Borel $\sigma$-algebra on $X$ is denoted by $\mathcal B(X)$. We say that a Borel measure is boundedly finite if it is finite on every bounded Borel set.
Let $\spaceOfMeasures$ be the set of boundedly finite integer-valued measures on $\borel(\R^n \times \R_+)$. For any $\mu \in \spaceOfMeasures$, we define its ground measure $\mu_g$ as 
\[
\mu_g(A) = \mu(A \times \R_+),
\]
for every $A \in \borel(\R^n)$. If $\mu_g$ is a simple counting measure on $\borel(\R^n)$, \ie if $\mu_g(\{x\}) \in \{0,1\}$ for every $x \in \R^n$, we write $\mu \in \spaceOfMeasuresWithSimpleGround$.

We say that a sequence $\{\mu_j\}_{j \in \N} \subset \spaceOfMeasures$ $w^\#$-converges (``weak hash'' converges) to $\mu \in \spaceOfMeasures$ if 
\[
\int_{\R^n \times \R_+} f \, \dd{\mu}_j \to \int_{\R^n \times \R_+} f \, \dd{\mu}
\]
for every bounded continuous function $f$ on $\R^n \times \R_+$ vanishing outside a bounded set. This notion of convergence defines the $w^\#$-topology (``weak hash'' topology) on $\spaceOfMeasures$, whose  Borel $\sigma$-algebra is denoted by $\borel(\spaceOfMeasures)$. The $w^\#$-topology is metrizable on $\spaceOfMeasures$ (see \cite[A2.6.1]{daley-vere-jones-1}).
Moreover, by \cite[Proposition~9.1.IV]{daley-vere-jones-2}, $\spaceOfMeasures$, with the $w^\#$-topology, is a complete separable metric space and $\borel(\spaceOfMeasures)$ is the smallest $\sigma$-algebra with respect to which the mappings $\spaceOfMeasures \ni \mu \mapsto \mu(B)$ are measurable for every $B \in \borel(\R^n\times\R_+)$.

Let $\probSpace$ be a probability space. In all that follows follows, whenever a statement holds true $\mathbb P$-a.e. in $\Omega$ we may equivalently write \emph{almost surely}. 

A \textit{marked point process} (m.p.p.)  on $\R^n$ with marks in $\R_+$ is a measurable mapping  $ \PP \colon (\sampleSpace,\sigmaAlgebra) \rightarrow (\spaceOfMeasures,\borel(\spaceOfMeasures))$ such that $\mathbb{P}(\PP \in \spaceOfMeasuresWithSimpleGround)=1$. 
Let $N$ be a m.p.p. on $\R^n$ with marks in $\R_+$;
for every $\omega \in \sampleSpace$, set $\PP^\omega := \PP(\omega)$. The probability law of $\PP$ is denoted by $\probLawPP:= \prob \circ \PP^{-1}$. Define $\eventSimpleGround := \PP^{-1}(\spaceOfMeasuresWithSimpleGround)$; clearly $\eventSimpleGround \subset \sampleSpace$ and $\prob (\eventSimpleGround)=1$.

For $y \in \R^n$ we define the shift operator $S_y: \spaceOfMeasures \to \spaceOfMeasures$ as 
\begin{equation*}
    (S_y \mu)(B) := \mu(S_y'B),
\end{equation*}
for every $B \in \borel(\R^n \times \R_+)$, where 
\begin{equation*}
    S'_y B := \{(x+y,\rho): (x,\rho) \in B\}. 
\end{equation*}
We say that $\PP$ is stationary (with respect to $(S_y)_{y \in \R^n}$)  if
\begin{equation}
    \probLawPP \circ S_y = \probLawPP \quad \forall y \in \R^n. \label{eq: prob law invariant under shifts}
\end{equation}
Moreover, we say that $\PP$ is ergodic (with respect to $(S_y)_{y \in \R^n}$) if, in addition, 
\begin{equation}
    E \in \borel(\spaceOfMeasures), \quad S_y E = E \quad \forall y \in \R^n \, \implies \, \probLawPP(E) \in \{0,1\}.\label{eq: prob law ergodic under shifts}
\end{equation}
Now suppose that $(\tau_y)_{y \in \R^n}$ is a flow of $\mathbb{P}$-preserving transformations on $\sampleSpace$, \ie a group of transformations $(\tau_y)_{y \in \R^n} : \sampleSpace \to \sampleSpace$ such that $(y,\omega) \mapsto \tau_y \omega$ is a measurable map from $(\R^n \times \sampleSpace, \borel(\R^n) \times \sigmaAlgebra)$ to $(\sampleSpace, \sigmaAlgebra)$ and $\mathbb{P} \circ \tau_y = \mathbb{P}$ for every $y \in \R^n$. We say that $(\tau_y)_{y \in \R^n}$ is ergodic if
\begin{equation*}
    F \in \sigmaAlgebra, \quad \tau_y F = F \quad \forall y \in \R^n \, \implies \, \prob(F) \in \{0,1\}.
\end{equation*}
We observe that if
\begin{equation}
    \label{eq: stationarity assumption for MPP}
    \PP^{\tau_y \omega} = S_y \PP^\omega,
\end{equation}
for every $(y,\omega) \in \R^n \times \sampleSpace$, then \eqref{eq: prob law invariant under shifts} holds true. Moreover, if additionally $(\tau_y)_{y \in \R^n}$ is ergodic, then \eqref{eq: prob law ergodic under shifts} holds also true. 

Following \cite[Chapter 9.5]{daley-vere-jones-2} we introduce the \textit{expectation measures} of $\PP$ and $\PPg$ defined, respectively, as 
\begin{equation*}
    \EM(B) := \E[\PP(B)], \quad \EMg(A) := \E[\PPg(A)]
\end{equation*}
for every $B \in \borel(\R^n\times\R_+)$ and $A \in \borel(\R^n)$, where $\E$ denotes the expected value with respect to the probability measure $\mathbb P$. Throughout this section we tacitly assume that $\EM$ and $\EMg$ are boundedly finite. 

Thanks to \cite[Lemma~A2.7.II]{daley-vere-jones-1} (see also the comments before \cite[Lemma~12.2.III]{daley-vere-jones-2}) property \eqref{eq: prob law invariant under shifts} entails 
\[
\EM = \L^n \times \nu, 
\]
where $\nu(\R_+)< \infty$, since $M_g$ is boundedly finite. Therefore,
$\nu$ can be normalized and $\EM$ can be rewritten as
\begin{equation*}
    \EM = \IG \L^n \times \MD,
\end{equation*} 
where $\IG := \E[\PP_g(Q)] \in [0,+\infty)$ and $\MD$ is a probability measure on $(\R_+, \borel(\R_+))$. The constant $\IG$ is referred to as the \textit{intensity of the ground process}. 

\subsubsection{Ergodic theorems}\label{sec: ergodic theorems}
For later purposes, in this section we collect some useful consequences of the Ergodic Theorem for stationary and ergodic marked point processes \cite[Theorem~12.2.IV and Corollary~12.2.V]{daley-vere-jones-2}.

We start recalling that, since $\R^n \times \R_+$ is a complete separable metric space, we can appeal to \cite[Lemma~9.1.XIII]{daley-vere-jones-2} to deduce the existence of a sequence $\{x_i\}_{i \in \N}$ of $\R^n$-valued random variables and a sequence $\{\rho_i\}_{i \in \N}$ of $\R_+$-valued random variables such that almost surely 
\begin{equation*}
    \PP = \sum_{i = 1}^{\infty} \delta_{(x_i,\rho_i)}. 
\end{equation*}
Moreover, for every $\eps >0$, we define the rescaled measures  
\begin{equation}\label{eq: rescaled measures}
    \PP_\eps := \eps^n \sum_{i = 1}^\infty \delta_{(\eps x_i, \rho_i)}, \quad \PP_{g,\eps} = \eps^n \sum_{i=1}^\infty \delta_{\eps x_i}.
\end{equation}
The following result is a well known consequence of the Ergodic Theorem for stationary and ergodic point processes \cite[Theorem~12.2.IV and Corollary~12.2.V]{daley-vere-jones-2} and can be found, for example, in \cite[Claim 7.1]{faggionato}. 
\begin{prop} 
    \label{prop: ergodic theorem ground pp convergence of measures}
    Let $\PP$ be a stationary and ergodic m.p.p. on $\R^n$ with marks in $\R_+$. Then  
    there exists $\sampleSpace' \in \sigmaAlgebra$ with $\prob(\sampleSpace')=1$ such that
    \begin{equation*}
        \lim_{\eps \downarrow 0} \int_{\R^n} f(x) \, \dd{\PP_{g,\eps}^\omega}(x) = \int_{\R^n} f(x) \, \dd{\EM_g}(x),
    \end{equation*}  
    for every $\omega \in \sampleSpace'$ and every $f \in C_c^0(\R^n)$.     
\end{prop}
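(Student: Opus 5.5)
The plan is to derive the statement from the Ergodic Theorem for stationary ergodic point processes, applied along a countable family of test sets, and then to upgrade to all $f\in C_c^0(\R^n)$ by density. First observe that, by stationarity, $\EM_g=\IG\L^n$ (this is the ground marginal of $\EM=\IG\L^n\times\MD$). Since $\EM_g$ is boundedly finite, $\IG<\infty$. The ergodic theorem \cite[Theorem~12.2.IV and Corollary~12.2.V]{daley-vere-jones-2}, applied to the ground process $\PPg$ (which is itself stationary and ergodic, being a measurable shift-commuting image of $\PP$), gives the following: for every fixed bounded convex Borel set $A$ (for instance a cube) with $\L^n(A)>0$,
\[
\lim_{t\to\infty}\frac{\PPg(tA)}{\L^n(tA)}=\IG \quad \text{almost surely}.
\]
Setting $t=1/\eps$, this reads $\PP_{g,\eps}(A)=\eps^n\PPg(\eps^{-1}A)\to\IG\L^n(A)$ almost surely. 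This holds along all $\eps\downarrow0$ and not just a sequence, since the theorem is stated for the continuous parameter.

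Next, let $\mathcal{C}$ be the countable family of half-open dyadic-rational cubes $\prod_k[a_k,b_k)$ with $a_k,b_k\in\Q$. If the ergodic theorem is formulated only for cubes containing the origin, we write a general rational cube as a finite signed combination (inclusion–exclusion) of such cubes, or we use that convex averaging sequences are allowed. We then define $\sampleSpace'$ as the intersection over $A\in\mathcal{C}$ of the full-measure events above; this is a countable intersection, so $\prob(\sampleSpace')=1$. Fix $\omega\in\sampleSpace'$. For every $A\in\mathcal{C}$ we have $\PP^\omega_{g,\eps}(A)\to\IG\L^n(A)$, and in particular $\sup_\eps\PP^\omega_{g,\eps}(K)<\infty$ for each bounded set $K$, since $K$ is covered by one rational cube.

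The density step comes last. Given $f\in C_c^0(\R^n)$ with support in a rational cube $Q_0$ and $\delta>0$, uniform continuity yields a step function $g=\sum_m c_m\mathds{1}_{A_m}$ with $A_m\in\mathcal{C}$ disjoint, $A_m\subset Q_0$ and $\|f-g\|_\infty<\delta$. Then
\[
\Big|\int f\,\dd\PP^\omega_{g,\eps}-\IG\int f\,\dd x\Big|\le \delta\,\PP^\omega_{g,\eps}(Q_0)+\Big|\sum_m c_m\big(\PP^\omega_{g,\eps}(A_m)-\IG\L^n(A_m)\big)\Big|+\delta\,\IG\L^n(Q_0).
\]
The middle term tends to $0$ as $\eps\downarrow0$, and the first term is bounded by $\delta\,C(Q_0,\omega)$. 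Letting $\delta\to0$ concludes the argument. The only delicate point is matching the exact form of the cited ergodic theorem, namely continuous versus discrete parameter and the class of averaging sets. I expect this to be handled by the inclusion–exclusion reduction above, together with monotonicity in $t$: $\PPg(tA)$ is nondecreasing in $t$ for star-shaped $A$ containing the origin, and this sandwiches a continuous-parameter limit between sequential ones.
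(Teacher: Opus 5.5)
Your proof is correct and is essentially the paper's argument. The paper only cites this statement (from Faggionato), but its appendix proof of the marked analogue, which reduces to this statement for $h\equiv 1$, follows the same scheme: the ergodic theorem on origin-anchored boxes, a floor-function sandwich for the continuous parameter, differences of boxes to reach all rational boxes, a countable intersection, and approximation of $f$ by step functions. One small correction: Daley--Vere-Jones state the theorem for convex averaging sequences rather than for a continuous parameter, so your monotonicity sandwich is genuinely needed, not just a fallback.
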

An analogous result holds true for $\PP_\eps$ as stated in the following proposition. 
\begin{prop}
    \label{prop: ergodic theorem mpp convergence of measures}
    Let $\PP$ be a stationary and ergodic m.p.p. on $\R^n$ with marks in $\R_+$.
    Then, for every $h \in L^1(\R_+,\MD)$ there exists $\sampleSpace_h \in \sigmaAlgebra$ with $\prob(\sampleSpace_h)=1$ such that
    \begin{equation*}
        \lim_{\eps \downarrow 0} \int_{\R^n \times \R_+} f(x)h(\rho) \, \dd{\PP_\eps^\omega}(x,\rho) = \int_{\R^n\times \R_+} f(x) h(\rho) \, \dd{\EM}(x,\rho),
    \end{equation*}
for every $\omega \in \sampleSpace_h$ and every $f \in C_c^0(\R^n)$.   
\end{prop}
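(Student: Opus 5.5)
We reduce the statement to the Ergodic Theorem for stationary ergodic m.p.p.\ applied to a family of countably many test functions, then extend by density; the nontrivial point is that $h$ may be unbounded, so we work directly with the integrable function $h$ rather than a truncation.

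\textbf{Step 1 (single test set).} First assume $h\geq 0$; the general case follows by splitting $h=h^+-h^-$. For every bounded Borel set $A\subset\R^n$ consider the random variable $Z_A:=\int_{A\times\R_+}h(\rho)\,\dd{\PP}(x,\rho)=\sum_{x_i\in A}h(\rho_i)$. By Campbell's formula and $\EM=\IG\L^n\times\MD$ we get $\E[Z_A]=\IG\L^n(A)\int h\,\dd{\MD}<\infty$. The quantity $Z_A$ is the value on $A$ of the random measure $\Xi:=\sum_i h(\rho_i)\delta_{x_i}$ on $\R^n$. By \eqref{eq: prob law invariant under shifts}, $\Xi$ is stationary, and it is ergodic because it is a measurable, shift-covariant function of $\PP$. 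Its mean density is finite. Hence \cite[Theorem~12.2.IV and Corollary~12.2.V]{daley-vere-jones-2}, applied to $\Xi$ along a convex averaging family, yields almost surely
\[
\eps^n\Xi(\eps^{-1}A)\to \IG\L^n(A)\int h\,\dd{\MD}
\]
for $A$ in a fixed countable family of sets, for instance all dyadic half-open cubes, since boxes are admissible averaging sets. Collecting these countably many events gives $\sampleSpace_h$ with $\prob(\sampleSpace_h)=1$. Note that $\eps^n\Xi(\eps^{-1}A)=\int_{A\times\R_+}h\,\dd{\PP_\eps}$.

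\textbf{Step 2 (passage to $f\in C^0_c$).} Fix $\omega\in\sampleSpace_h$ and $f\in C_c^0(\R^n)$, with support in a dyadic cube $K$. For $\delta>0$ choose step functions $g^\pm_\delta$, constant on dyadic cubes of side $2^{-k}$, with $g^-_\delta\le f\le g^+_\delta$ and $g^+_\delta-g^-_\delta\le\delta\IF_K$; this is possible by uniform continuity of $f$. Since $h\ge0$, the measures $\mu_\eps:=h\,\PP_\eps$ are positive, so
\[
\int g^-_\delta h\,\dd{\PP_\eps}\le\int fh\,\dd{\PP_\eps}\le\int g^+_\delta h\,\dd{\PP_\eps}.
\]
By Step 1, both outer terms converge to $\int g^\pm_\delta h\,\dd{\EM}$, which differ from each other by at most $\delta\IG\L^n(K)\int h\,\dd{\MD}$. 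Letting $\delta\to0$ gives the claim. The key point is that $\sampleSpace_h$ does not depend on $f$.

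\textbf{Main obstacle.} The delicate step is Step 1. One must justify that the ergodic theorem in \cite{daley-vere-jones-2} applies to the weighted random measure $\Xi$, which has unbounded weights but only an $L^1$ moment. Alternatively, it can be applied to $\PP$ with the marked functional $\sum_{x_i\in A}h(\rho_i)$ through \cite[Corollary~12.2.V]{daley-vere-jones-2}, which allows integrable functions of the marks. One must also check that ergodicity transfers to $\Xi$.

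If one prefers to avoid this, a fallback is to approximate $h$ in $L^1(\MD)$ by bounded continuous $h_k$, for which the result follows from $w^\#$-type arguments. The error term is then controlled by applying the scalar ergodic theorem to $\sum_{x_i\in K}|h-h_k|(\rho_i)$, which again yields an almost sure limit $\IG\L^n(K)\norm{h-h_k}_{L^1(\MD)}$. Taking countably many $k$ keeps the exceptional set null.
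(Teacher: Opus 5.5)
Your architecture (ergodic theorem on a countable family of boxes, then a sandwich of $f$ between step functions using $h\ge0$) is the same as the paper's. The issue you single out as the main obstacle is harmless. $\Xi$ is a stationary random measure with finite mean density $\IG\int h\,\dd{\MD}$, and ergodicity passes to it as a shift-covariant factor of $\PP$. The paper simply invokes \cite[Theorem~12.2.IV and Corollary~12.2.V]{daley-vere-jones-2}, which covers $h\in L^1(\MD)$.

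The real gap is in how you apply that theorem in Step~1. The theorem is stated for \emph{convex averaging sequences}, which must be nested ($A_j\subset A_{j+1}$). For a dyadic cube $A$ that is not star-shaped with respect to the origin, e.g.\ $A=(1,2]^n$, the dilates $\eps^{-1}A=(\eps^{-1},2\eps^{-1}]^n$ are not nested; they are even disjoint once $\eps'<\eps/2$. So the theorem says nothing about $\eps^n\Xi(\eps^{-1}A)$, and ``boxes are admissible averaging sets'' is false for such dilation families. A second omission: the theorem gives almost sure convergence along one sequence, with an exceptional set depending on that sequence. You need a single event on which the limit holds as $\eps\downarrow0$ through all reals, since the proposition is later used for arbitrary $\eps_j\downarrow0$ with a fixed $\sampleSpace'$. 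That passage is missing.

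Both points are repaired as in the paper's appendix. Apply the theorem only to boxes with a vertex at the origin, $A=\prod_i(0,b_i]$, together with the analogous boxes $\prod_i(a_i,0]$ and mixed ones to cover all orthants. Along $\{jA\}_{j\in\N}$ these are nested, convex, and have inradius tending to $\infty$. For such $A$ and $h\ge0$, the map $t\mapsto\Xi(tA)$ is nondecreasing, whence
\[
\frac{\lfloor t\rfloor^n}{t^n}\,\frac{\Xi(\lfloor t\rfloor A)}{\lfloor t\rfloor^n}\le\frac{\Xi(tA)}{t^n}\le\frac{(\lfloor t\rfloor+1)^n}{t^n}\,\frac{\Xi((\lfloor t\rfloor+1)A)}{(\lfloor t\rfloor+1)^n}.
\]
This gives the limit for real $t=\eps^{-1}\to\infty$ on the same event. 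Since $t(A_1\setminus A_2)=tA_1\setminus tA_2$, finite differences and disjoint unions (inclusion--exclusion) then yield the limit for every box with rational, in particular dyadic, endpoints. Intersecting over this countable family gives $\sampleSpace_h$.

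After this your Step~2 goes through, with one small correction. $\supp f$ need not lie in a single dyadic cube, because the origin is a vertex of the dyadic grid at every scale; use a finite union such as $(-K,K]^n$ instead. The fallback via bounded continuous $h_k$ is not needed.
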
 
As a simple consequence, we also have the following result. 
\begin{prop}
    \label{prop: ergodic theorem, integral on eps^-1 U of h(rho)}
    Let $\PP$ be a stationary and ergodic m.p.p. on $\R^n$ with marks in $\R_+$. 
    Then, for every $h \in L^1(\R_+,\MD)$ there exists $\sampleSpace_h \in \sigmaAlgebra$ with $\prob(\sampleSpace_h)=1$ such that
    \begin{equation*}
        \lim_{\eps \downarrow 0} \eps^n \int_{(\eps^{-1} \domain) \times \R_+} h(\rho) \, \dd{\PP^\omega}(x,\rho) = \IG \L^n(\domain) \int_{\R_+} h(\rho) \, \dd{\MD}(\rho),
    \end{equation*}
for every $\omega \in \sampleSpace_h$ and for every $\domain \in \mathcal A(\R^n)$ bounded and Lipschitz.    
\end{prop}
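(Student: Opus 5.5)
The idea is to deduce the statement from Proposition~\ref{prop: ergodic theorem mpp convergence of measures} by approximating $\IF_\domain$ with continuous compactly supported functions. First note that
\[
\eps^n \int_{(\eps^{-1}\domain)\times\R_+} h(\rho)\,\dd{\PP^\omega}(x,\rho) = \int_{\R^n\times\R_+} \IF_\domain(x) h(\rho)\,\dd{\PP^\omega_\eps}(x,\rho),
\]
by the definition \eqref{eq: rescaled measures} of $\PP_\eps$. Also $\EM=\IG\L^n\times\MD$, so the right-hand side of the claim equals $\int \IF_\domain(x)h(\rho)\,\dd{\EM}$. Splitting $h=h^+-h^-$, we may assume $h\ge 0$.

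Fix a countable family of bounded Lipschitz open sets; for instance, work with a fixed $\domain$ first. For $k\in\N$ choose $f_k,g_k\in C^0_c(\R^n;[0,1])$ such that $f_k\le \IF_\domain\le g_k$, with $f_k\uparrow \IF_\domain$ on $\domain$ and $g_k\downarrow \IF_{\overline\domain}$. Since $\domain$ is Lipschitz, $\L^n(\partial \domain)=0$, and hence $\int (g_k-f_k)\,\dd{\L^n}\to 0$. Take $\sampleSpace_h$ to be the full-measure event of Proposition~\ref{prop: ergodic theorem mpp convergence of measures} associated with $h$; it is valid simultaneously for all $f\in C^0_c$. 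For $\omega\in\sampleSpace_h$, monotonicity gives
\[
\IG\int f_k\,\dd{x}\int h\,\dd{\MD}\le \liminf_{\eps}\int \IF_\domain h\,\dd{\PP_\eps^\omega}\le\limsup_\eps \int \IF_\domain h\,\dd{\PP_\eps^\omega} \le \IG\int g_k\,\dd{x}\int h\,\dd{\MD}.
\]
Letting $k\to\infty$ yields the limit $\IG\L^n(\domain)\int h\,\dd{\MD}$.

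The remaining issue, which is the only genuinely delicate point, is uniformity in $\domain$: the event $\sampleSpace_h$ must not depend on $\domain$. The construction above already handles this. The sandwich argument uses only the single event from Proposition~\ref{prop: ergodic theorem mpp convergence of measures}, which holds for every $f\in C^0_c(\R^n)$ at once, and the approximants $f_k,g_k$ are chosen after $\omega$ is fixed. No further intersection of events is needed, and the same $\sampleSpace_h$ works for every bounded Lipschitz $\domain$. When splitting $h$, intersect the two events for $h^\pm$.
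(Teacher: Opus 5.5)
Your proposal is correct and follows essentially the same route as the paper. Both sandwich $\IF_\domain$ between continuous compactly supported functions (the paper uses Urysohn cutoffs $\varphi_{-\delta}\le\IF_\domain\le\varphi_\delta$ built from inner and outer $\delta$-neighborhoods) and apply Proposition~\ref{prop: ergodic theorem mpp convergence of measures} on its single full-measure event, which holds for all $f\in C^0_c(\R^n)$ at once. Both then conclude via $\L^n(\partial\domain)=0$. Your explicit reduction to $h\ge 0$, intersecting the events for $h^\pm$, makes precise a step the paper uses only tacitly, since the sandwich inequality needs $h\ge 0$.
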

The corresponding proofs are postponed to Appendix \ref{section: appendix}. 
\subsubsection{Palm distributions}\label{sec: Palm}
In this section we recall the notion of \textit{Palm distributions} for a stationary marked point process. These are introduced through a refined version of the Campbell Theorem (see \cite[Theorem~13.2.III]{daley-vere-jones-2} and the introduction to \cite[Section 13.4]{daley-vere-jones-2}). 

\begin{thm}[Refined Campbell's Theorem]\label{thm: refined Campbell theorem} Let $\PP$ be a stationary and ergodic m.p.p. on $\R^n$ with marks in $\R_+$. Then there exists a family $\{\probLawPP_{(0,\rho)}\}_{\rho \in \R_+}$ of probability measures on $(\spaceOfMeasures, \borel(\spaceOfMeasures))$, called Palm distributions associated to the m.p.p. $\PP$, such that
    \begin{equation}\label{eq: Palm}
        \E \left[\int_{\R^n \times \R_+} f(x,\rho,\PP) \, \dd{\PP}(x,\rho)\right] = \int_{\R^n \times \R_+ \times \spaceOfMeasures} f(x,\rho,S_{-x} \mu) \, \dd{\EM}(x,\rho) \, \dd{\probLawPP_{(0,\rho)}}(\mu),
    \end{equation}
for every nonnegative $(\borel(\R^n) \otimes \borel(\R_+) \otimes \borel(\spaceOfMeasures))$-measurable function $f$.     
    \end{thm}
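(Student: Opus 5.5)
The plan is to derive the statement from the general Campbell--Palm theory for stationary random measures on a product space, specializing to $\R^n$ with marks in $\R_+$. \textbf{First}, define the (unnormalized) Campbell measure of $\PP$ on $\R^n\times\R_+\times\spaceOfMeasures$,
\[
C(A\times D\times \Gamma):=\E\big[\PP(A\times D)\,\IF_{\Gamma}(S_{x}\,\cdot)\big]
\]
(more precisely, after the change of variables $\mu\mapsto S_{-x}\mu$, the ``reduced'' Campbell measure $\tilde C(A\times D\times\Gamma):=\E\big[\int_{A\times D}\IF_\Gamma(S_{-x}\PP)\,\dd{\PP}(x,\rho)\big]$, where $S_{-x}\PP$ is the configuration seen from the point $x$). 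By monotone class arguments, $\tilde C$ is a $\sigma$-finite measure, because $\tilde C(A\times\R_+\times\spaceOfMeasures)=\EM_g(A)<\infty$ for bounded $A$.

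\textbf{Second}, use stationarity \eqref{eq: prob law invariant under shifts} to show that for fixed $D,\Gamma$ the measure $A\mapsto\tilde C(A\times D\times\Gamma)$ is translation invariant. Indeed, replacing $\PP$ by $S_y\PP$ (same law) shifts the atoms by $y$ while leaving $S_{-x}\PP$ seen from each atom unchanged. Since this measure is boundedly finite, it is therefore a multiple of $\L^n$, say $\kappa(D\times\Gamma)\L^n(A)$. Taking $\Gamma=\spaceOfMeasures$ gives $\kappa(D\times\spaceOfMeasures)=\IG\MD(D)$, consistent with $\EM=\IG\L^n\times\MD$. Thus $\kappa(\cdot\times\Gamma)\ll \IG\MD$ for each $\Gamma$.

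\textbf{Third}, disintegrate $\kappa$ with respect to its marginal $\IG\MD$ on $\R_+$. Since $\spaceOfMeasures$ is Polish in the $w^\#$-topology (as recalled in Section \ref{sec: mpp, basic definitions}), the disintegration theorem yields a measurable family of probability measures $\probLawPP_{(0,\rho)}$ on $\spaceOfMeasures$ with $\kappa(D\times\Gamma)=\int_D \probLawPP_{(0,\rho)}(\Gamma)\,\IG\,\dd{\MD}(\rho)$, defined for $\MD$-a.e.\ $\rho$ and extended arbitrarily otherwise. Hence
\[
\tilde C(\dd x,\dd\rho,\dd\mu)=\EM(\dd x,\dd\rho)\,\probLawPP_{(0,\rho)}(\dd\mu).
\]

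\textbf{Finally}, pass from indicators of rectangles to nonnegative measurable $g(x,\rho,\mu)$ by monotone class and monotone convergence, obtaining $\E[\int g(x,\rho,S_{-x}\PP)\,\dd\PP]=\int g\,\dd\EM\,\dd\probLawPP_{(0,\rho)}$. Setting $g(x,\rho,\mu):=f(x,\rho,S_{x}\mu)$ gives \eqref{eq: Palm}; this uses measurability of $(x,\mu)\mapsto S_x\mu$, which follows from $w^\#$-continuity.

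The main obstacle is the second step: verifying translation invariance carefully at the level of the reduced Campbell measure, including the measurability of $(x,\omega)\mapsto S_{-x}\PP^\omega$ and the identification of a translation-invariant boundedly finite measure with a multiple of Lebesgue measure. The disintegration step is standard once Polishness is available. Ergodicity is not needed for this argument; the plan uses only stationarity and the bounded finiteness of $\EM_g$.
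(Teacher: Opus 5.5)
Your argument is correct and is the standard proof of \cite[Theorem~13.2.III and Section~13.4]{daley-vere-jones-2}: reduced Campbell measure, translation invariance giving a Lebesgue factor, then disintegration in the mark. The paper does not prove the statement but only cites this reference. You are also right that only stationarity and bounded finiteness of $\EM_g$ are used, not ergodicity.

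One sign needs fixing. With the paper's convention $(S_y\mu)(B)=\mu(S'_yB)$, the shift $S_y$ moves atoms by $-y$, so the configuration seen from the atom at $x$ is $S_x\PP$, not $S_{-x}\PP$. With $S_{-x}\PP$ as written, your $\tilde C$ is not translation invariant in $x$. Accordingly, the final substitution should be $g(x,\rho,\mu):=f(x,\rho,S_{-x}\mu)$; with these two changes every step goes through verbatim.
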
 

We notice that, taking $f(x,\rho,\mu) = \tilde{f}(x,\rho,S_x \mu)$ for a nonnegative $(\borel(\R^n) \otimes \borel(\R_+) \otimes \borel(\spaceOfMeasures))$-measurable function $\tilde{f}$, \eqref{eq: Palm} can be equivalently written as 
    \begin{equation}\label{eq: Palm-rw}
         \E \left[\int_{\R^n \times \R_+} f(x,\rho,S_x \PP) \, \dd{\PP}(x,\rho)\right] = \int_{\R^n \times \R_+ \times \spaceOfMeasures} f(x,\rho,\mu) \, \dd{\EM}(x,\rho) \, \dd{\probLawPP_{(0,\rho)}}(\mu).
\end{equation}
Let $A \in \borel(\R^n)$ and $A' \in \borel(\R_+)$ be such that $\EM(A \times A') \in (0,\infty)$.
Choosing $f(x,\rho,\mu) = \IF_A(x) \IF_{A'}(\rho) \IF_E(\mu)$, with $E \in \borel(\spaceOfMeasures)$, by \eqref{eq: Palm-rw} we get
\begin{equation*}
    \probLawPP_{(0,\rho)} (E) = \frac{1}{\EM(A\times A')} \E \left[\int_{\R^n \times \R_+ } \IF_A(x) \IF_{A'}(\rho) \IF_E(S_x \PP) \, \dd{\PP}(x,\rho)\right].   
\end{equation*}
Consider now the set of measures $\spaceOfMeasures_{(0,\rho)}:= \{\mu \in \spaceOfMeasures: \mu(\{(0,\rho)\}) > 0\}$. Clearly, if $(x,\rho) \in \supp \PP$, then $(0,\rho) \in \supp S_x \PP$, and thus $\IF_{\spaceOfMeasures_{(0,\rho)}}(S_x \PP) = 1$. The latter equality readily implies that $\probLawPP_{(0,\rho)}(\spaceOfMeasures_{(0,\rho)})=1$, hence $\probLawPP_{(0,\rho)}$ is concentrated on $\spaceOfMeasures_{(0,\rho)}$. Moreover, since $\spaceOfMeasures_{(0,\rho)} \subset \spaceOfMeasures_0 := \{\mu \in \spaceOfMeasures : \mu(\{0\}\times \R_+) > 0\}$ we also have $\probLawPP_{(0,\rho)} (\spaceOfMeasures_0) = 1$. Similarly, since $\probLawPP(\spaceOfMeasuresWithSimpleGround) = 1$, we have $\probLawPP_{(0,\rho)}(\spaceOfMeasuresWithSimpleGround) = 1$.

\subsubsection{Thinned process}\label{sec: thinned process}
In this section we introduce some notion of thinning of point processes. Let $\PP$ be a m.p.p. on $\R^n$ with marks in $\R_+$;
for $\delta >0$ fixed, we define the thinned process 
\begin{equation*}
    \PP^\delta := \sum_{i \in I^\delta} \delta_{(x_i,\rho_i)} = \sum_{i = 1}^\infty \IF_{Y_\delta}(S_{x_i} \PP) \delta_{(x_i,\rho_i)}, 
\end{equation*}
with
\begin{equation*}
    I^\delta := \{i \in \N: \min_{j \neq i} |x_i - x_j| \geq \delta\}, \quad Y_\delta = \{\mu \in \spaceOfMeasures: \mu(B_\delta \times \R_+) = 1\}.
\end{equation*}

We observe that in the context of homogeneous Poisson point processes $N^\delta$ is known as \emph{first Matérn hard-core process} \cite{matern}.

Now let $\PP$ be stationary and ergodic with respect to $(S_y)_{y \in \R^n}$. Since the map $\PP \mapsto \PP^\delta$ commutes with $S_y$ for every $y \in \R^n$, it is immediate to deduce that $\PP^\delta$ is stationary and ergodic with respect to $(S_y)_{y \in \R^n}$. Therefore, using Theorem~\ref{thm: refined Campbell theorem}, we can compute the expectation measure of $N^\delta$ as follows. 
Let $A \in \borel(\R^n)$ and $A' \in \borel(\R_+)$; in view of the definition of $\PP^\delta$ we have 
\begin{equation}\label{eq: expectation measure thinned process} 
    \begin{split}
        \E[\PP^\delta(A \times A')] &= \E \left[\sum_{i = 1}^\infty \IF_{Y_\delta}(S_{x_i} \PP) \IF_A(x_i) \IF_{A'}(\rho_i)\right]\\
        & = \E \left[\int_{\R^n \times \R_+} \IF_A(x)\IF_{A'}(\rho) \IF_{Y_\delta}(S_x \PP) \, \dd{\PP}(x,\rho)\right] \\
        &= \int_{\R^n \times \R_+ \times \spaceOfMeasures} \IF_A(x) \IF_{A'}(\rho) \IF_{Y_\delta}(\mu) \, \dd{\EM}(x,\rho) \, \dd{\probLawPP_{(0,\rho)}}(\mu) \\
        &= \IG \L^n(A) \int_{A'} \probLawPP_{(0,\rho)}(Y_\delta) \, \dd{\MD}(\rho). 
    \end{split}
\end{equation}
Therefore, thanks to \eqref{eq: expectation measure thinned process}, the expectation measure $\EM^\delta$ of the thinned process $\PP^\delta$ can be expressed in terms of the Palm measure $\probLawPP_{(0, \rho)}$ as
\[
\EM^\delta =\IG \L^n \times (\probLawPP_{(0, \cdot)}(Y_\delta) \MD).
 \]
Furthermore, taking $A' = \R_+$ in \eqref{eq: expectation measure thinned process}, we deduce that the expectation measure $\EM_g^\delta$ of the ground process $\PP_g^\delta$ is given by 
\[
\EM_g^\delta = \IG \int_{\R_+} \probLawPP_{(0,\rho)} (Y_\delta) \, \dd{\MD}(\rho) \, \L^n.
\]
We conclude this section by observing that
\begin{equation}\label{eq: Palm limit delta}
\lim_{\delta \downarrow 0}\probLawPP_{(0,\rho)}(Y_\delta) = 1,    
\end{equation}
for every $\rho \in \R_+$. Indeed, we have
\[
\probLawPP_{(0,\rho)}(Y_\delta) = \int_{\spaceOfMeasures_0 \cap \spaceOfMeasuresWithSimpleGround} \IF_{Y_\delta}(\mu) \, \dd{\probLawPP_{(0,\rho)}}(\mu);
\]
moreover, every $\mu \in \spaceOfMeasures_0 \cap \spaceOfMeasuresWithSimpleGround$ belongs to $Y_\delta$ for sufficiently small $\delta >0$. Hence, \eqref{eq: Palm limit delta} follows by the Dominated Convergence Theorem.

\section{Statement of the main result}\label{ss:main result}
In this section we introduce the random nonlocal functionals we are going to study and state the main result of this work. 

\subsection{Assumptions}\label{ss: assumptions}
Let $\PP$ be a m.p.p. on $\R^n$ with marks in $\R_+$. As already observed, $\PP$ can be written as a countable sum of Dirac deltas; \ie
\begin{equation*}
    \PP = \sum_{i = 1}^{\infty} \delta_{(x_i,\rho_i)} \quad \mathbb{P}\text{-a.e.}
\end{equation*}
for some $\R^n$-valued random variables $\{x_i\}_{i \in \N}$ and   $\R_+$-valued random variables $\{\rho_i\}_{i \in \N}$.

We assume that $\PP$ satisfies the following hypotheses:
\begin{enumerate}[label=(H\arabic*)]
    \item\label{h: stationarity and ergodicity} $\PP$ is stationary and ergodic;
    \item \label{h: boundedly-finite} the expectation measures $\EM$ and $\EMg$ are boundedly finite; 
    \item \label{h: positive intensity} $\IG:=\E[\PPg(Q)]>0$; 
    \item \label{h: moment condition} $\{\rho_i\}_{i \in \N}$ satisfy the moment condition
    \begin{equation}\label{eq: moment condition}
    \E\bigg[\sum_{x_i\in Q} \rho_i^{n-sp}\bigg]<+\infty. 
    \end{equation}
\end{enumerate}
\begin{rmk}[On the assumptions]\label{rmk:On the assumptions}
Some comments on hypotheses \ref{h: stationarity and ergodicity}-\ref{h: moment condition} are in order. 

We notice that thanks to \ref{h: stationarity and ergodicity} and \ref{h: boundedly-finite} we know that $ \EM = \IG \L^n \times \MD$, with $\IG<+\infty$ and $\MD$ probability measure on $(\R_+,\mathcal B(\R_+))$, while \ref{h: positive intensity} ensures that the ground process $\PPg$ is nontrivial. 

Moreover, in view of Theorem~\ref{thm: refined Campbell theorem}, by choosing $f(x,\rho,\PP)=\IF_{Q}(x)\,\rho^{n-sp}$ in \eqref{eq: Palm}, we obtain
\begin{equation}\label{e:expectation rhoi vs pi}
\E\bigg[\sum_{x_i\in Q} \rho_i^{n-sp}\bigg]=\IG \int_{\R_+} \rho^{n-sp} \, \dd{\MD}(\rho).
\end{equation}
Therefore, the moment condition \eqref{eq: moment condition} in \ref{h: moment condition}  can be  equivalently replaced by 
\begin{equation}
    \label{eq: integrability assumption on radii}
    \int_{\R_+} \rho^{n-sp} \, \dd{\MD}(\rho) < +\infty.
\end{equation}
\end{rmk}

\subsection{The functionals}
Let $\domain \subset \R^n$ be open, bounded, and Lipschitz. For $\omega \in \eventSimpleGround$ and $\eps >0$ consider the set of indices  
\begin{equation*}
    \insideDomainEpsOmega:= \{i \in \N: x_i(\omega) \in \eps^{-1}\domain\}
\end{equation*}
and the parameter
\begin{equation*}
    \lambda_\eps := \eps^\frac{n}{n-sp}.
\end{equation*}
We define the \textit{obstacle set} $\setOfObstaclesEpsOmega$ as 
\begin{equation*}
    \setOfObstaclesEpsOmega  := \bigcup_{i \in \insideDomainEpsOmega} \obstacleEpsIOmega,
\end{equation*}
where
\begin{equation*}
    \obstacleEpsIOmega := \ballCriticalRadiusEpsIomega. 
\end{equation*}

Finally, we define the functionals $\functional_\eps^\omega: L^p(\domain) \longrightarrow [0, +\infty]$ as 
\begin{equation}\label{eq: functionals-e} 
    \functional_\eps^\omega(u) := 
   \begin{cases}
        \displaystyle\int_{\domain\times \domain}\frac{|u(x)-u(y)|^p}{|x-y|^{n+sp}}\, \dd{x} \, \dd{y} & \text{ if } u\in \Wsp(\domain),\,
        \tilde{u}=0\,\,  \csp\hbox{-}\text{q.e. on } \setOfObstaclesEpsOmega \cap \domain,\\\\
        +\infty & \text{ otherwise. }
  \end{cases}
\end{equation}
\subsection{The main result}
We are now in a position to state the main theorem of this work, that is, an \emph{almost sure} $\Gamma$-convergence result for the random functionals $\functional_\eps$. 
\begin{thm}[$\Gamma$-convergence]\label{thm: main result}
Let $\PP$ be a m.p.p. on $\R^n$ with marks in $\R_+$ satisfying \ref{h: stationarity and ergodicity}-\ref{h: moment condition}. Let $\functional_\eps^\omega$ be the functionals defined as in \eqref{eq: functionals-e}. Then there exists $\sampleSpace' \in \sigmaAlgebra$ with $\mathbb{P}(\sampleSpace')=1$ such that, for every $\omega \in \sampleSpace'$, $(\functional_\eps^\omega)$ $\Gamma$-converges in the  $L^p(\domain)$-topology, as $\eps \downarrow 0$, to the deterministic functional $\functional : L^p(\domain) \longrightarrow [0,+\infty]$ given by 
    \begin{equation*}
        \functional(u) := 
        \begin{cases}
            \displaystyle\int_{\domain\times \domain}\frac{|u(x)-u(y)|^p}{|x-y|^{n+sp}}\, \dd{x} \, \dd{y} +\capacityConstantErgodic\int_{\domain}|u|^p\dd{x} &\text{ if } u \in \Wsp(\domain), \\\\
            +\infty &\text{ otherwise, }
        \end{cases} 
    \end{equation*}
where 
\begin{equation*}
    \capacityConstantErgodic := \csp(B_1)\, \E\bigg[\sum_{x_i\in Q} \rho_i^{n-sp}\bigg].
\end{equation*}
\end{thm}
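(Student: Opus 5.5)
We prove the $\Gamma$-liminf and $\Gamma$-limsup inequalities separately, on a common event $\sampleSpace'$ of full probability. We take $\sampleSpace'$ to be the intersection of $\eventSimpleGround$ with the events given by Propositions~\ref{prop: ergodic theorem ground pp convergence of measures}, \ref{prop: ergodic theorem mpp convergence of measures} and \ref{prop: ergodic theorem, integral on eps^-1 U of h(rho)}. These propositions are applied to the finitely many marks functions $h(\rho)=\rho^{n-sp}$ and $h(\rho)=\rho^{n-sp}\IF_{\{\rho\le \truncParam\}}$, with $\truncParam\in\N$. They are applied not only to $\PP$ but also to the thinned processes $\PP^{1/\truncParam}$, which are stationary and ergodic. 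Countability of the parameters keeps the intersection of full probability.

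\emph{Good/bad decomposition.} For a truncation parameter $\truncParam$, call an index $i\in\insideDomainEps$ good if $\rho_i\le \truncParam$, $i\in I^{2/\truncParam}$ (no other centre within distance $2/\truncParam$), and $\dist(\eps x_i,\partial\domain)>\eps/\truncParam$. Call all other indices bad.

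The good obstacles $B_{\lambda_\eps\rho_i}(\eps x_i)$ are contained in disjoint balls $B_{\eps/\truncParam}(\eps x_i)$, since $\lambda_\eps\ll\eps$. By the Palm computation \eqref{eq: expectation measure thinned process} and the ergodic theorem,
\[
\eps^n\sum_{i\ \mathrm{good}}\rho_i^{n-sp}\IF_A(\eps x_i)\to \IG\L^n(A)\int_0^{\truncParam}\rho^{n-sp}\probLawPP_{(0,\rho)}(Y_{2/\truncParam})\,\dd\MD(\rho).
\]
By \eqref{eq: Palm limit delta} and monotone convergence, this tends to $\csp(B_1)^{-1}\capacityConstantErgodic\,\L^n(A)$ as $\truncParam\to\infty$. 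The bad part has asymptotic capacity density at most
\[
\csp(B_1)\,\IG\int \rho^{n-sp}\bigl(1-\IF_{\rho\le\truncParam}\probLawPP_{(0,\rho)}(Y_{2/\truncParam})\bigr)\,\dd\MD \to 0 ,
\]
using the subadditivity of capacity and $\csp(B_{\lambda_\eps\rho})=\lambda_\eps^{n-sp}\rho^{n-sp}\csp(B_1)=\eps^n\rho^{n-sp}\csp(B_1)$.

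\emph{Lower bound.} Let $u_j\to u$ with $\sup_j\functional_j(u_j)<\infty$. We apply a nonlocal Joining Lemma in the spirit of \cite[Lemma~3.9]{focardi-aperiodic-fractional}. It averages over dyadic annuli $C_j^{i,h}$ around the good centres, with $\lambda_j\ll r\ll\eps_j/\truncParam$, and produces modified functions $w_j$. Each $w_j$ equals the annular average $(u_j)_{C}$ on a sphere-like layer around each good centre, while the energy changes by at most $O(1/\joiningLemmaEnergyDiffParam)$ and $\|w_j-u_j\|_{L^p}\to0$.

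Restricting to the inner balls and dropping all bad obstacles, which only lowers the energy, each good ball contributes at least $|(u_j)_C|^p\,\csp(T_j^i,B_R;r)$. By \eqref{e:cap3} and scaling this is $\approx |(u_j)_C|^p\lambda_j^{n-sp}\rho_i^{n-sp}\csp(B_1)$. The interaction part outside these small balls is bounded below by $|u|^p_{\Wsp(\domain)}$ via lower semicontinuity, since the balls have vanishing total volume. The locality defects between the inner balls and the rest are controlled by \eqref{eq: locality defect on B(r) x B(r)^c}.

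It remains to identify the capacitary sum. The sum $\eps_j^n\sum_{\mathrm{good}}\rho_i^{n-sp}|(u_j)_C|^p$ converges to $\capacityConstantErgodic_\truncParam\int|u|^p$. To see this, approximate $u$ by continuous functions and use the $w^\#$ convergence of the rescaled thinned measures together with a Poincaré-type estimate. That estimate compares the annular averages with averages on $\eps$-cells, whose error is bounded by the energy times $\eps^{sp}$. Letting $j\to\infty$, then $\joiningLemmaEnergyDiffParam\to\infty$, then $\truncParam\to\infty$ gives the liminf.

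\emph{Upper bound (main obstacle).} By density it suffices to treat $u\in C_c^\infty(\domain)$. We build the recovery sequence
\[
u_\eps=u\,\prod_i(1-\xi_i)\cdot(1-\zeta_\eps),
\]
where $\xi_i$ are almost-minimisers of $\CSPK(T^i_\eps,B_{\eps/\truncParam}(\eps x_i))$ around the good obstacles, and $\zeta_\eps$ is a capacitary function equal to $1$ on the bad set $T^b_\eps$ with energy $\lesssim\csp(T^b_\eps)$. The hard part is to show that three contributions vanish in the iterated limit: the products of cutoffs, the cross-interactions between distinct good potentials (far apart on the scale $\lambda_\eps$), and the interaction of $\zeta_\eps$ with the good potentials. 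The Lipschitz bound on $u$ together with the separation of the good balls from an enlargement of the bad set handles these. For the $\zeta_\eps$ term we first try bounding its energy by the density estimate above, which requires that clusters of bad obstacles straddling $\partial\domain$ be handled through the boundary-distance condition. We then show that the diagonal short-range part gives $\capacityConstantErgodic\int|u|^p$ via \eqref{e:cap1}, while the long-range part converges to $|u|^p_{\Wsp(\domain)}$.
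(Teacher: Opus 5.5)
There are genuine gaps, mainly in the variational part. Your probabilistic bookkeeping is sound, and in one respect cleaner than the paper's. You obtain $\limsup_{\eps}\eps^n\sum_{i\ \mathrm{bad}}\rho_i^{n-sp}$ as the difference of two ergodic limits: one for $N$ with mark function $\rho^{n-sp}$, one for $N^{2/R}$ with mark function $\rho^{n-sp}\IF_{\{\rho\le R\}}$. This avoids the cardinality estimates of Lemma~\ref{lmm: eps^n card(inside minus thinnedInside) -> 0}.

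\textbf{Upper bound.} There are three problems.

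\emph{Density reduction.} The reduction to $u\in C_c^\infty(U)$ is not legitimate. The limit $\mathcal{F}$ lives on the regional space $W^{s,p}(U)$ with no boundary condition. For $sp>1$ (possible as soon as $n\ge2$), $C_c^\infty(U)$ is not dense there, because functions have a trace on $\partial U$. Already $u\equiv1$, with $\mathcal{F}(1)=\gamma\,\mathcal{L}^n(U)$, cannot be approximated in energy. You must take $u$ Lipschitz up to $\partial U$, as the paper does with $W^{1,\infty}(U)\cap L^p(U)$. Then the boundary-layer obstacles genuinely have to be pinned.

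\emph{Separation.} You control the interaction of $\zeta_\eps$ with the good potentials by a separation between good balls and an enlargement of the bad set. Your definition of good does not give this. A bad index $j$ with $\lambda_\eps\rho_j\gtrsim\eps/R$, i.e.\ $\rho_j\gtrsim\eps^{-sp/(n-sp)}/R$, may have a ball meeting $B_{\eps/R}(\eps x_i)$ for a good $i$. Such indices have vanishing density but need not be absent. The paper builds separation into the very good indices \eqref{eq: definition of very good indices} and proves $\eps^n\#(G_{\eps,R}\setminus VG_{\eps,R})\to0$. It then uses a cutoff for the capacity of the not very good balls relative to their doubles (Lemma~\ref{lmm: NVG have vanishing relative capacity}), so that $1-\phi_{j,R}$ vanishes on the very good balls.

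With your global $\zeta_\eps$ you can avoid separation altogether, but only through a sharp expansion such as
$|a+b|^p\le(1+\eta)^{p-1}|a|^p+(1+\eta^{-1})^{p-1}|b|^p$,
with $a=f(x)-f(y)$, $b=\|u\|_\infty|\zeta_\eps(x)-\zeta_\eps(y)|$ and $f=u\prod_i(1-\xi_i)$. A cruder product estimate multiplies the energy of $f$ by a factor larger than one and destroys $\gamma$. In that route, $u_\eps\to u$ in $L^p$ holds only in the iterated limit (via Sobolev for $\zeta_\eps$), so you also need a diagonal argument.

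\emph{Supports of the $\xi_i$.} If $\xi_i$ almost minimises $\CSP(T^i_\eps,B_{\eps/R}(\eps x_i))$, neighbouring supports may touch, since good centres are only $2\eps/R$ apart. Then \eqref{eq: locality defect on B(r) x B(r)^c} does not control the cross-interactions: it bounds the locality defect only across a sphere whose radius is much larger than that of the support. The paper instead uses $\xi_L$ supported in $B_{L\lambda_\eps\rho_i}(\eps x_i)$, with $L\lambda_\eps\rho_i\ll\eps/R$.

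\textbf{Lower bound.} Your route is the paper's, but the annuli cannot be dyadic. If the layer where $w_j$ is constant has fixed aspect ratio $m$, the competitor only gives $\csp(T^i_j,B_{mr};r)$, with $r$ the inner radius. Then \eqref{e:cap3} does not apply, since it needs the ratio of radii to diverge. In rescaled variables, \eqref{e:cap2} only bounds the deficit by roughly $(m-1)^{-sp}\CSP(B_R,B_r)\approx(m-1)^{-sp}R^{n-sp}\csp(B_1)$ per good obstacle, uniformly over $\rho_i\le R$. For fixed $m$ this does not vanish and is not small compared with $\csp(B_{\rho_i})$ when $\rho_i\ll R$, so you lose the sharp constant. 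The paper lets the ratio grow with the truncation, $m_R=\lfloor R^{(n-sp+1)/(sp)}\rfloor$. The deficit is then multiplied by the bounded quantity $\eps_j^n\sum_{i\in G_{j,R}}|(u_j)_{C_j^{i,h_j}}|^p$, so the total error is of order $(m_R-1)^{-sp}R^{n-sp}\to0$. Because the Joining Lemma constant then depends on $m_R$, the limits must be taken in the order $j\to\infty$, then $L\to\infty$, then $R\to\infty$.
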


Some remarks are in order. 

\begin{rmk}[Periodically distributed obstacles with random radii]\label{rmk:periodic}
  We observe that the case of periodically distributed obstacles with random radii can be recovered as a special case of Theorem~\ref{thm: main result}.

    To this end, let $\{z_i\}_{i \in \N}$ be an enumeration of $\Z^n$. We notice that the ground process corresponding to $\{z_i\}_{i \in \N}$ is stationary with respect to \emph{integer} translations in $\R^n$, but not stationary with respect to $(S_y)_{y \in \R^n}$; therefore this does not satisfy \ref{h: stationarity and ergodicity}. Hence, to produce a ground process which is both periodic and 
    $(S_y)_{y \in \R^n}$-stationary we need to consider a random translation of $\Z^n$ as follows. Let $\zeta$ be a uniformly distributed random variable on $[0,1)^n$. For any $i \in \N$, let $x_i := z_i + \zeta$. Then the ground process $\PPg = \sum_{i=1}^\infty \delta_{x_i}$ is stationary with respect to $(S_y)_{y \in \R^n}$. To see that, we fix $y \in \R^n$ and we define the random variable $\zeta_y$ by
    \begin{equation*}
    \zeta_y := (\zeta-y)\,\mathrm{mod}\,\Z^n,
    \end{equation*}
    where $(\cdot)\,\mathrm{mod}\,\Z^n$ denotes the unique representative in $[0,1)^n$. We also define
    \begin{equation*}
    z_y := (\zeta-y)-\zeta_y \in \Z^n,
    \end{equation*}
    so that $\zeta-y = \zeta_y + z_y$. Observe that $\zeta_y$ and $\zeta$ have the same probability distribution. Moreover,
    \begin{equation*}
    S_y \PPg = \sum_{i=1}^\infty \delta_{z_i + \zeta-y} = \sum_{i=1}^\infty \delta_{z_i + \zeta_y + z_y} = \sum_{i=1}^\infty \delta_{z_i + \zeta_y},
    \end{equation*}
    since clearly $\Z^n + z_y = \Z^n$. Hence, $S_y\PPg$ has the same probability distribution as $\sum_{i=1}^\infty \delta_{z_i + \zeta} = \PPg$.
    Furthermore, for every $\omega \in \sampleSpace$, $\PPg^\omega$ is $\Z^n$-periodic, being a translated copy of $\Z^n$. 
\end{rmk}
\begin{rmk}[Wald identity]\label{r:Wald}
We observe that, under some additional assumptions on the m.p.p., the constant $\capacityConstantErgodic$ in Theorem~\ref{thm: main result} has a more explicit expression in the spirit of the classical Wald Identity (see e.g.\ \cite{Blackwell}). Namely, 
besides stationarity and ergodicity, suppose that the random variables $\rho_i^{n-sp}$ and $\IF_Q(x_i)$ are independent for every $i \in \N$. Moreover, assume that $\E[\rho_i^{n-sp}] = \E[\rho_1^{n-sp}]$ for every $i \in \N$.
Then we have
    \begin{equation*}
        \capacityConstantErgodic = \IG \E[\rho_1^{n-sp}].
    \end{equation*}
Indeed, under these assumptions, $\E[\rho_i^{n-sp} \IF_Q(x_i)] = \E[\rho_1^{n-sp}] \E[\IF_Q(x_i)]$ for every $i \in \N$, and hence, since $\rho_i^{n-sp} \IF_Q(x_i) \geq 0$, 
    by a consequence of the Monotone Convergence Theorem \cite[Theorem 2.15]{folland}, we obtain 
    \begin{equation*}
        \begin{split}
            \capacityConstantErgodic &= \E \sum_{x_i \in Q} \rho_i^{n-sp} = \E \sum_{i = 1}^\infty \rho_i^{n-sp} \IF_Q(x_i) = \sum_{i = 1}^\infty \E[\rho_i^{n-sp} \IF_Q(x_i)] \\
            &= \E[\rho_1^{n-sp}] \, \E \sum_{i=1}^\infty \IF_Q(x_i) = \E[\rho_1^{n-sp}] \, \E[\PPg(Q)] = \IG \E[\rho_1^{n-sp}].
        \end{split}
\end{equation*}
\end{rmk}

\begin{rmk}[Stationary nonergodic m.p.p.] \label{thm: main result, nonergodic}
It is also worth mentioning here that Theorem~\ref{thm: main result} is stated and proven in the ergodic case  for notational simplicity only. In fact, if in \ref{h: stationarity and ergodicity} the ergodicity assumption is dropped, then Theorem~\ref{thm: main result} still holds up to replacing the constant $\gamma$ with the random variable $\kappa$ defined as follows 
    \begin{equation}\label{e:kappa-random}
        \kappa^\omega := \csp(B_1) \, \E\bigg[\sum_{x_i\in Q} \rho_i^{n-sp} \, \big | \, \mathcal{I} \bigg](\omega). 
    \end{equation}
Here we additionally assume that the probability space $\probSpace$ where $\PP$ is defined is endowed with a flow of $\prob$-preserving transformations $(\tau_y)_{y \in \R^n}$ (see Section \ref{sec: mpp, basic definitions} for the definition) satisfying \eqref{eq: stationarity assumption for MPP}.
Moreover, in \eqref{e:kappa-random}, $\mathcal{I}$ denotes the sub-$\sigma$-algebra of $(\tau_y)_{y \in \R^n}$-invariant events; \ie 
    \begin{equation*}
    \mathcal{I} := \{E \in \mathscr{F} : \tau_y E = E \quad \forall y \in \R^n\}
    \end{equation*}
and $\E[ \cdot | \mathcal{I}]$ a version of the conditional expectation given $\mathcal{I}$. 
This additional structure is only used to formulate the nonergodic case on the underlying sample space and is not restrictive, since one may canonically introduce a new probability space $(\tilde{\sampleSpace},\tilde{\sigmaAlgebra},\tilde{\prob}) := (\spaceOfMeasures,\borel(\spaceOfMeasures),\probLawPP)$, endowed with the flow $\tilde{\tau}_y:=S_y$, together with the marked point process $\tilde{\PP}(\omega):=\omega$, which has the same law as $\PP$ and satisfies all the required properties.

By definition of conditional expectation, we readily obtain that $\E[\kappa] = \capacityConstantErgodic$. Then, by \ref{h: moment condition} we get that $\kappa^\omega < +\infty$ for $\P$-a.e. $\omega \in \sampleSpace$.

Furthermore, as in the ergodic case, $\kappa$ can be rewritten in terms of an integral over $\R_+$ with respect to some \emph{random} measure. Specifically, if $\mathcal{M}_{\R_+}$ denotes the set of boundedly finite measures on $\R_+$, by \cite[Lemma~12.2.III]{daley-vere-jones-2} there exists an $\mathcal{I}$-measurable random measure $\psi$ on $\R_+$, \ie a measurable mapping from $(\Omega, \mathcal{I})$ to $(\mathcal{M}_{\R_+}, \mathcal{B}(\mathcal{M}_{\R_+}))$, such that almost surely
\begin{equation}
    \label{eq: definition of conditional expectation measure}
\mathbb{E} \left[ \int_{\mathbb{R}^n \times \R_+} f(x, \rho) \, \dd{N} (x, \rho) \,\middle| \, \mathcal{I} \right]
= \int_{\R^n \times \R_+} f(x, \rho) \, \dd{x} \, \dd{\psi}(\rho),
\end{equation}
for all nonnegative measurable functions $f$ on $\R^n \times \R_+$. 
Then, choosing $f(x,\rho) = \IF_{Q}(x) \rho^{n-sp}$, we get
\begin{equation*}
    \kappa^\omega = \csp(B_1) \, \int_{\R_+} \rho^{n-sp} \, \dd{\psi^\omega}(\rho),
\end{equation*}
for $\P$-a.e. $\omega \in \sampleSpace$.

We conclude this remark by observing that a proof of the $\Gamma$-convergence result in the stationary case can be obtained from the proof in the ergodic case with minor changes. 
\end{rmk}

\subsection{A generalization to the case of random shapes and anisotropic kernels}\label{ss:random shapes}
In this subsection we discuss a far reaching generalization of Theorem~\ref{thm: main result} to the case of randomly shaped obstacles and anisotropic, translation invariant, even symmetric, $-(n+sp)$-positive homogeneous, measurable kernels. 

To this end, let $\mathscr{K}$ be a measurable kernel satisfying \eqref{e:Kernels}, and let 
$\mathcal{K}$ be the corresponding functional as in \eqref{e:functionalK}.
Moreover, let $\PP$ be a m.p.p.\ on $\R^n$ with marks in $\R_+$ satisfying \ref{h: stationarity and ergodicity}-\ref{h: moment condition}. For every $\eps >0$ and $\omega \in \eventSimpleGround$, we define the new \textit{obstacle set} $S_\eps^\omega$ as 
\begin{equation*}
    S_\eps^\omega  := \bigcup_{i \in \insideDomainEpsOmega} S_\eps^{i,\omega},
\end{equation*}
where $S_\eps^i$ are random sets such that:
\begin{enumerate}[label=(H\arabic*), start=5]
    \item\label{h: geometric assumption on obstacle of random shape} $S_\eps^{i,\omega} \subseteq \obstacleEpsIOmega$;
    \item\label{h: stationarity of random capacities} there exists a sequence $\{\gamma_i\}_{i \in \N}$ of $\R_+$-valued random variables such that the marked point process $\sum_{i \in \N} \delta_{(x_i,\gamma_i)}$ is stationary and ergodic and the following scaling property holds 
    \begin{equation}
        \label{eq: capacity of random obstacle}
         \cspK(S_\eps^{i,\omega}) = \eps^n \gamma_i(\omega),
    \end{equation}
\end{enumerate}
for every $\eps >0$, $\omega \in \eventSimpleGround$, and $i \in \N$.

We observe that, in view of \eqref{e:comparable-kernels}, assumptions \ref{h: geometric assumption on obstacle of random shape} and \ref{h: stationarity of random capacities} in particular imply 
$0 \leq \gamma_i \leq  c \rho_i^{n-sp}$, for some $c>0$.
Therefore, in view of \ref{h: moment condition} we immediately deduce that
\begin{equation}\label{eq: moment condition for random capacities}
    \E\bigg[\sum_{x_i\in Q} \gamma_i\bigg]<+\infty.
\end{equation}
We now consider the functionals $\widetilde\functionalK_\eps^\omega: L^p(\domain) \longrightarrow [0, +\infty]$ defined as 
\begin{equation}\label{eq: functionals-e random shapes} 
    \widetilde\functionalK_\eps^\omega(u) := 
   \begin{cases}
        \displaystyle\mathcal{K}(u,\domain \times \domain) & \text{ if } u\in \Wsp(\domain),\,
        \tilde{u}=0\,\,  \csp\hbox{-}\text{q.e. on } S_\eps^\omega \cap \domain,\\\\
        +\infty & \text{ otherwise. }
  \end{cases}
\end{equation}
We emphasize that the homogeneity of $\mathscr{K}$ in \eqref{e:Kernels} is necessary to prove a $\Gamma$-convergence result which is subsequence independent, 
as observed already in the local setting (cf. \cite[Remark~2.7]{ansini-braides}).
The following almost sure $\Gamma$-convergence result holds true (cf. \cite[Theorem~4.1]{focardi-aperiodic-fractional}
for the case of rotation-invariant kernels).
\begin{thm}[$\Gamma$-convergence for randomly shaped obstacles]\label{thm: main result random shapes}
Let $\PP$ be a m.p.p.\ on $\R^n$ with marks in $\R_+$ satisfying \ref{h: stationarity and ergodicity}-\ref{h: moment condition}. In addition, assume that \ref{h: geometric assumption on obstacle of random shape} and \ref{h: stationarity of random capacities} hold with respect to some measurable kernel $\mathscr{K}$
satisfying \eqref{e:Kernels}. 

Let $\widetilde\functionalK_\eps^\omega$ be the functionals defined as in \eqref{eq: functionals-e random shapes}. Then there exists $\sampleSpace' \in \sigmaAlgebra$ with $\mathbb{P}(\sampleSpace')=1$ such that, for every $\omega \in \sampleSpace'$, $(\functionalK_\eps^\omega)$ $\Gamma$-converges in the  $L^p(\domain)$-topology, as $\eps \downarrow 0$, to the deterministic functional $\widetilde\functionalK: L^p(\domain) \longrightarrow [0,+\infty]$ given by 
    \begin{equation*}
        \widetilde\functionalK(u) := 
        \begin{cases}
            \displaystyle\mathcal{K}(u,\domain\times \domain)
            +\tilde \gamma \int_{\domain}|u|^p\dd{x} &\text{ if } u \in \Wsp(\domain), \\\\
            +\infty &\text{ otherwise, }
        \end{cases} 
    \end{equation*}
where 
\begin{equation*}
    \tilde\gamma := \E\bigg[\sum_{x_i\in Q} \gamma_i \bigg].
\end{equation*}
\end{thm}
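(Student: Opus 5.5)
The plan is to follow the scheme used for Theorem~\ref{thm: main result}, with the capacity of a ball replaced by the random capacities $\gamma_i$. The marked point process $\sum_i\delta_{(x_i,\gamma_i)}$ plays the role of $\PP$ when we need ergodic averages (Propositions~\ref{prop: ergodic theorem mpp convergence of measures}--\ref{prop: ergodic theorem, integral on eps^-1 U of h(rho)}). The process $\PP$ itself, through the radii $\rho_i$, controls the geometry. Hypothesis \ref{h: geometric assumption on obstacle of random shape} gives $S_\eps^i\subseteq B_{\lambda_\eps\rho_i}(\eps x_i)$, so the good/bad decomposition depends only on $(x_i,\rho_i)$ and carries over unchanged.

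For a truncation parameter $R$, the good indices are those with $\rho_i\le R$ and $\min_{j\neq i}|x_i-x_j|\ge 2/R$, i.e.\ the thinned process $\PP^{2/R}$ restricted to bounded marks. Their obstacles are contained in balls $B_{\eps/R}(\eps x_i)$, which are disjoint and also separated from an enlargement of the bad set. The remaining obstacles form the bad set. By subadditivity, \eqref{e:comparable-kernels} and \ref{h: moment condition}, their total capacity density satisfies
\[
\limsup_\eps \sum_{i\ \mathrm{bad}} \csp(S_\eps^i)\lesssim \eps^n\sum_{i\ \mathrm{bad}}\rho_i^{n-sp}\to \IG\L^n(\domain)\int\rho^{n-sp}\bigl(1-\IF_{\rho\le R}\probLawPP_{(0,\rho)}(Y_{2/R})\bigr)\dd\MD .
\]
This limit follows from the ergodic theorem applied to $\PP$ and to the thinned process, whose expectation measure is computed in \eqref{eq: expectation measure thinned process}. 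It tends to $0$ as $R\to\infty$ by \eqref{eq: Palm limit delta} and dominated convergence.

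For the \textbf{lower bound}, take $u_\eps\to u$ with equibounded energy. A nonlocal Joining Lemma selects annuli around the good obstacles at a scale between $\lambda_\eps R$ and $\eps/R$. On these annuli we modify $u_\eps$ to equal local averages $u_\eps^i$ without increasing the energy by more than $O(1/L)$. Restricting the energy near each good obstacle then gives, via \eqref{e:cap3} and the homogeneity in \eqref{e:Kernels},
\[
\cspK(S^i_\eps,B_{R'}\,;r)\,|u^i_\eps|^p\approx \eps^n\gamma_i|u^i_\eps|^p .
\]
The far-field part of $\mathcal K$ recovers $\mathcal K(u,\domain\times\domain)$ by lower semicontinuity. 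The capacitary term reduces to the discrete sum $\eps^n\sum_{i\,\mathrm{good}}\gamma_i|u^i_\eps|^p$. We identify the limit of this sum by approximating $|u|^p$ by continuous functions and applying Proposition~\ref{prop: ergodic theorem mpp convergence of measures} to the thinned marked process $\sum_{i\in I^{2/R}}\IF_{\rho_i\le R}\delta_{(x_i,\gamma_i)}$, which is stationary and ergodic. This requires the discrete–continuum approximation of the averages $u^i_\eps$ against $u$. The limit is $\E[\sum_{x_i\in Q,\,\mathrm{good}}\gamma_i]$, computed via Palm measures, and it increases to $\tilde\gamma$ as $R\to\infty$.

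For the \textbf{upper bound}, by density take $u$ Lipschitz with compact support in a neighbourhood of $\overline\domain$. Set
\[
u_\eps=u\prod_{i\,\mathrm{good}}(1-\xi^i_\eps)\cdot(1-\zeta_\eps),
\]
where $\xi_\eps^i$ is the rescaled almost minimizer $\xi_r^{T}$ of \eqref{e:almost-min} for $T=\eps^{-1}\lambda_\eps^{-1}$-rescaled $S^i_\eps$, supported in $B_{\eps/R}(\eps x_i)$. The function $\zeta_\eps$ is a capacitary potential for the bad set: it has energy $\lesssim\sum_{\mathrm{bad}}\csp(S^i_\eps)$ and vanishes in $L^p$. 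Expanding $\mathcal K(u_\eps)$ through the locality defect \eqref{e:locality defect}, we get three pieces. The diagonal interactions near each good obstacle give $\eps^n\gamma_i|u(\eps x_i)|^p+o(\eps^n)$, using \eqref{e:cap1}. The cross terms between distinct obstacles and with the far field are controlled by \eqref{eq: locality defect on B(r) x B(r)^c} and the $\eps$-separation. The bad part contributes at most $C\sum_{\mathrm{bad}}\csp(S^i_\eps)$, which vanishes after $R\to\infty$. Summing with the ergodic theorem again yields $\widetilde\functionalK(u)$, and a diagonal argument in $(R,L,\eps)$ concludes.

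The main obstacle I expect is the upper bound. One must handle the interaction between $\zeta_\eps$, the good correctors, and $u$ in the nonlocal energy without double counting. One must also show that the mixed terms of the product vanish uniformly even though the number of obstacles diverges, and this relies essentially on \eqref{eq: locality defect on B(r) x B(r)^c} holding uniformly in the shape $T\subseteq B_\rho$.
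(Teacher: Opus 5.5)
\textbf{Gap: joint stationarity in the lower bound.} You identify the capacitary sum by applying the ergodic theorem and Palm calculus to $\sum_{i\in I^{2/R}}\IF_{\{\rho_i\le R\}}\delta_{(x_i,\gamma_i)}$, and you assert that this process is stationary and ergodic. It selects by $\rho_i$ and weights by $\gamma_i$, so it is a functional of the joint process $\sum_i\delta_{(x_i,(\rho_i,\gamma_i))}$. No hypothesis makes that joint process stationary: \ref{h: stationarity and ergodicity} and \ref{h: stationarity of random capacities} concern only the two marginal marked processes. For instance, the coupling between $\rho_i$ and $\gamma_i$ can differ in two half-spaces while both marginals stay stationary and $\gamma_i\le c\rho_i^{n-sp}$ still holds. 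So neither your ergodic limit nor your Palm formula for $\E[\sum_{x_i\in Q,\,\mathrm{good}}\gamma_i]$ is justified as stated.

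The fix is the route behind Remark~\ref{rmk:random_shapes}, i.e.\ the proof of Proposition~\ref{lmm: integral approximation G_j,R} with $\gamma_i$ in place of $\rho_i^{n-sp}$:
\begin{itemize}
\item Thin only by positions. The process $\sum_{i\in I^{2/R}}\delta_{(x_i,\gamma_i)}$ is a shift-commuting functional of $\sum_i\delta_{(x_i,\gamma_i)}$, hence stationary and ergodic by \ref{h: stationarity of random capacities}.
\item Apply the ergodic theorem to it. The resulting constant increases to $\tilde\gamma$ as $R\to\infty$, by \eqref{eq: Palm limit delta} for the $\gamma$-marked process.
\item Remove the indices with $\rho_i>R$, using $\gamma_i\le c\rho_i^{n-sp}$ and Lemma~\ref{lmm: limit of eps^n sum rho_i^n-sp 1_(R,infty)(rho_i)}.
\item Remove also the indices with $\dist(\eps x_i,\partial U)\le\eps/R$. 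You must exclude these from the good set anyway, so that the joining-lemma balls lie in $U$, and there are only $o(\eps^{-n})$ of them.
\end{itemize}
The upper bound is unaffected, since there $\eps^n\sum_{\mathrm{good}}\gamma_i|u(\eps x_i)|^p\le\eps^n\sum_{i\in I_\eps}\gamma_i|u(\eps x_i)|^p$ suffices.

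\textbf{Upper bound: a different and simpler route.} The paper applies the joining lemma to $u$ and glues $(1-\xi)(u)_C$ only on the very good balls. It then multiplies by a relative capacitary function of the not very good obstacles and controls interactions region by region via the safety layer \eqref{eq: very good enlarged obstacles and not very good obstacles with double radius are disjoint}. Your product of $[0,1]$-valued factors with Lipschitz $u$ needs neither the joining lemma nor that layer, and the interaction you flag as the main obstacle is elementary.
\begin{itemize}
\item Since $0\le1-\zeta_\eps\le1$, for $f=u(1-\sum_i\xi^i_\eps)$ you get $\mathcal K(f(1-\zeta_\eps))\le(1+\eta)\mathcal K(f)+C_\eta\|u\|_\infty^p\mathcal K(\zeta_\eps)$.
\item For $\mathcal K(f)$, split $U\times U$ into $\bigcup_iB^i\times B^i$, with $B^i=B_{\eps/R}(\eps x_i)$, and its complement.
\item On $B^i\times B^i$, freeze $u$ at $u(\eps x_i)$. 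Put $|u(x)-u(y)|^p$ and $(\mathrm{Lip}(u)\,\eps/R)^p|\xi^i_\eps(x)-\xi^i_\eps(y)|^p$ into the $C_\eta$-part. The first vanishes since $\bigcup_iB^i\times B^i\subset\Delta_{2\eps/R}$; the second vanishes because of the factor $\eps^p$.
\item On the complement, the corrector contribution is $\lesssim C_\eta\|u\|_\infty^p\sum_i\DspK(\xi^i_\eps,B^i\times(B^i)^c)$.
\end{itemize}
For this last sum to vanish via \eqref{eq: locality defect on B(r) x B(r)^c}, which requires $R(r)/r\to\infty$, the correctors must be supported in $B_{\lambda_\eps r_\eps}(\eps x_i)$ with $\lambda_\eps r_\eps\ll\eps/R$, not on all of $B_{\eps/R}(\eps x_i)$. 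For example, take $r_\eps^2=\eps/(R\lambda_\eps)$, the analogue of the paper's $L$ versus $L^2$.

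\textbf{Two smaller points.} First, your claim that the good balls are separated from an enlargement of the bad set is false for your good set. A bad obstacle with $\lambda_\eps\rho_j\gtrsim\eps/R$ can meet $B_{\eps/R}(\eps x_i)$, which is why the paper passes to $VG_{\eps,R}$; your product construction, however, never uses this separation. Second, $\|\zeta_\eps\|_{L^p(U)}$ becomes small only after $R\to\infty$, via the fractional Sobolev inequality, so the diagonal argument you mention is genuinely needed.
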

\begin{rmk}
Arguing as in Remark~\ref{rmk:On the assumptions} we infer that
\begin{equation}\label{e:expectation gammai vs pi}
\tilde\gamma = \E\bigg[\sum_{x_i\in Q} \gamma_i \bigg]=
\IG \int_{\R_+} \gamma \, \dd{\hat\MD}(\gamma)
\end{equation}
for some probability measure $\hat \MD$ on $\R_+$, where $m_g$ is defined in \ref{h: positive intensity} 
(cf. \eqref{e:expectation rhoi vs pi}).     

    Moreover, as in Remark~\ref{r:Wald}, if we assume that the random variables $\gamma_i$ and $\IF_Q(x_i)$ are independent for every $i \in \N$
    and that $\E[\gamma_i] = \E[\gamma_1]$ for every $i \in \N$, then the Wald Identity yields
        $\tilde\gamma = \IG \E[\gamma_1]$.
Moreover, we observe that in view of Remark~\ref{rmk:periodic} the convergence result in Theorem~\ref{thm: main result random shapes} generalizes the analysis in \cite{caffarelli-mellet} to the case of \emph{randomly shaped clustering obstacles}.

Finally, as in Remark~\ref{thm: main result, nonergodic} we conclude observing that, if we drop the ergodicity assumption, then the $\Gamma$-limit $\widetilde\functionalK$ is a \emph{random functional} with capacitary constant given by
\[
\tilde k^\omega:= \E\bigg[\sum_{x_i\in Q} \gamma_i \, \big | \, \mathcal{I} \bigg](\omega).
\]
\end{rmk}

\section{Some technical auxiliary results}
In this section we prove a number of technical lemmas which will be employed to achieve the main result, Theorem~\ref{thm: main result}.
\subsection{Sets of indices}\label{subsec:set-of-indices}
For later purposes, we start by defining some suitable sets of indices together with the corresponding sets of obstacles. 
For $\eps >0$, $\omega \in \eventSimpleGround$, and $i \in \N$, we recall that 
\begin{equation*}
    \insideDomainEpsOmega:= \{i \in \N: x_i(\omega) \in \eps^{-1}\domain\}.
\end{equation*}
Moreover, for $\delta >0$ we define 
\begin{equation*}
    \insideDomainEps^{\delta,\omega} := I^{\delta,\omega} \cap \insideDomainEpsOmega,
\end{equation*}
where $I^\delta$ is as in Section \ref{sec: thinned process}.

To prove the $\Gamma$-$\liminf$ inequality, Proposition~\ref{prop: gamma-liminf}, it is useful to introduce the following set of indices
\begin{equation}\label{eq: definition of good indices}
    \goodEpsOmega := \left\{i \in \thinnedInsideDomainEpsOmega: \rho_i(\omega) \leq \truncParam, \, \dist(\eps x_i(\omega), \partial \domain)  > \frac{\eps}{\truncParam} \right\},
\end{equation}
for some truncation threshold $\truncParam \in \N$. 
The corresponding set of balls will be called the \textit{good} balls or obstacles. 
We observe that by construction the good balls $\{\ballContainingObstacleEpsIomega\}_{i \in \goodEpsOmega}$ are pairwise disjoint and contain the corresponding obstacles $\{\obstacleEpsI\}_{i \in \goodEpsOmega}$, whenever $\eps$ is sufficiently small (depending on $\truncParam$). 

On the other hand, to prove the $\Gamma$-$\limsup$ inequality, Proposition~\ref{prop: gamma-limsup}, it is convenient to introduce the subset of $\goodEpsOmega$ defined as
\begin{equation}\label{eq: definition of very good indices}
    \veryGoodEpsOmega := \{i \in \goodEpsOmega : \ballContainingObstacleEpsIomega \cap \ballDoubleCriticalRadiusEpsJomega = \emptyset \,\,  \forall j\in \insideDomainEps, \, j \neq i\}.
\end{equation}
In this case the corresponding set of balls will be called the \textit{very good} balls or obstacles.
Finally, we also consider the complement of $\veryGoodEpsOmega$ in $\insideDomainEpsOmega$, that is, the set defined as 
\begin{equation}\label{eq: definition of not very good indices}
    \notVeryGoodEpsOmega := \insideDomainEpsOmega \setminus \veryGoodEpsOmega.
\end{equation}
In analogy, the corresponding set of balls is called the \textit{not very good} balls or obstacles (see Figure~\ref{fig: very good and not very good obstacles}).
\begin{figure}
    \includegraphics[width=0.6\textwidth]{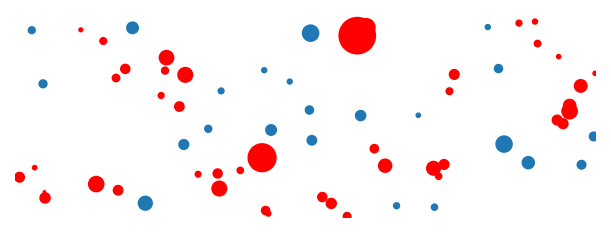}
    \caption{A realization of $\{\obstacleEpsI\}_{i\in\goodEps}$ (in blue) and $\{\obstacleEpsI\}_{i \in \notVeryGoodEps}$ (in red).}
    \label{fig: very good and not very good obstacles}
\end{figure}
By construction we have
\begin{equation}
    \label{eq: very good enlarged obstacles and not very good obstacles with double radius are disjoint}
    \bigg(\bigcup_{i \in \veryGoodEpsOmega} \ballContainingObstacleEpsIomega \bigg) \cap \bigg( \bigcup_{i \in \notVeryGoodEpsOmega} \ballDoubleCriticalRadiusEpsIomega \bigg)= \emptyset,
\end{equation}
that is, around the not very good balls there is a \emph{safety layer} which is still at an $\eps$ distance from the very good balls. 
Property \eqref{eq: very good enlarged obstacles and not very good obstacles with double radius are disjoint} will be pivotal in the construction of the recovery sequence in the proof of Proposition~\ref{prop: gamma-limsup}, combined with the fact that the \textit{not very good} obstacles do not contribute to the capacitary term, as stated in Lemma~\ref{lmm: NVG have vanishing relative capacity} below.


\subsection{Simple consequences of the ergodic theorems}\label{sec: consequences of ergodic theorems}
In this subsection we collect some simple but useful consequences of the results contained in Section \ref{sec: ergodic theorems}. These will be used several times throughout the paper.
\begin{lmm}
    \label{lmm: ergodic thm pp measure convergence for any thinned process N^2/R}
    There exists $\sampleSpace' \in \sigmaAlgebra$ with $\prob(\sampleSpace') =1$ such that for every $\omega \in \sampleSpace'$ and every $\truncParam \in \N$:
    \begin{enumerate}[label=(\alph*)]
    \item \label{enum: ergodic thm pp convergence of measures thinning} Proposition~\ref{prop: ergodic theorem ground pp convergence of measures} holds with $\PP$ and $\EM$ replaced, respectively, by $\PP^{2/\truncParam}$ and $\EM^{2/\truncParam}$;
    \item \label{enum: ergodic thm mpp convergence of measures thinning} Proposition~\ref{prop: ergodic theorem mpp convergence of measures} holds with $\PP$ and $\EM$ replaced, respectively, by $\PP^{2/\truncParam}$ and $\EM^{2/\truncParam}$.
    \end{enumerate}
\end{lmm}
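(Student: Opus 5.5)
The plan is to reduce both statements to Propositions~\ref{prop: ergodic theorem ground pp convergence of measures} and \ref{prop: ergodic theorem mpp convergence of measures} applied to the thinned processes $\PP^{2/\truncParam}$, one for each $\truncParam\in\N$. Then we intersect countably many events of full probability. The first step is to check that, for fixed $\delta=2/\truncParam$, the process $\PP^\delta$ satisfies the hypotheses of those propositions. As observed in Section~\ref{sec: thinned process}, the map $\mu\mapsto \sum_i \IF_{Y_\delta}(S_{x_i}\mu)\delta_{(x_i,\rho_i)}$ commutes with the shifts $S_y$. Hence $\PP^\delta$ is stationary and ergodic. One should also check that this map is measurable from $(\spaceOfMeasures,\borel(\spaceOfMeasures))$ into itself, so that $\PP^\delta$ is a genuine m.p.p. with simple ground. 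For this it suffices that $\mu\mapsto \PP^\delta(\mu)(B)$ is measurable for every bounded Borel $B$, which follows by writing $\PP^\delta(B)=\int \IF_B(x,\rho)\IF_{Y_\delta}(S_x\mu)\,\dd\mu(x,\rho)$ and using joint measurability of $(x,\mu)\mapsto S_x\mu$. Since $\PP^\delta\le \PP$, the expectation measures $\EM^\delta\le\EM$ and $\EM^\delta_g\le \EMg$ are boundedly finite by \ref{h: boundedly-finite}. By \eqref{eq: expectation measure thinned process} they are given by $\EM^\delta=\IG\L^n\times(\probLawPP_{(0,\cdot)}(Y_\delta)\MD)$.

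Second, to apply Proposition~\ref{prop: ergodic theorem mpp convergence of measures} we must fix the test functions $h$. Its statement gives a full-measure set $\sampleSpace_h$ depending on $h$, so the statement in \ref{enum: ergodic thm mpp convergence of measures thinning} must be read, as in the original proposition, as ``for every $h\in L^1(\R_+,\MD^\delta)$ there is a full-measure set''. Here $\MD^\delta$ is the normalized mark distribution of $\PP^\delta$, which is absolutely continuous with respect to $\MD$ with density bounded by a constant. So every $h\in L^1(\MD)$ is admissible, and in particular the functions used later, such as $\rho^{n-sp}$ and $\IF_{[0,\truncParam]}(\rho)\rho^{n-sp}$, are admissible. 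For each $\truncParam$ and each $h$ from the countable family needed later, we obtain a set $\sampleSpace_{\truncParam,h}$. Proposition~\ref{prop: ergodic theorem ground pp convergence of measures} gives a set $\sampleSpace_\truncParam$ with convergence for all $f\in C_c^0$ simultaneously. We then set $\sampleSpace'$ to be the countable intersection of these sets together with $\eventSimpleGround$; it has probability one. A degenerate case needs care: if $\EM^\delta_g=0$, the normalization is void but convergence to $0$ is trivial, since a nonnegative random variable with zero expectation vanishes a.s.

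The only real subtlety, and what I expect to be the main obstacle, is making ``for every $h$'' compatible with a single $\sampleSpace'$ independent of $h$. My approach is to restrict to the countable family of $h$ actually used in the sequel, or alternatively to a countable dense family combined with monotone approximation. I would handle this by density. Take the countable family of $h$ given by indicators of rational intervals and their products with $\rho^{n-sp}$. Using the bound $\EM^\delta\le\EM$ together with Proposition~\ref{prop: ergodic theorem mpp convergence of measures} for $\PP$ itself with $h=\rho^{n-sp}$ and $h=1$, we get a.s. uniform control of tails. This allows passing from the countable family to general $h\in L^1(\MD)$ by a sandwich argument on bounded sets.
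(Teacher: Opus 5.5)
Your argument is the paper's: apply Propositions~\ref{prop: ergodic theorem ground pp convergence of measures} and~\ref{prop: ergodic theorem mpp convergence of measures} to each thinned process $N^{2/R}$, then intersect the resulting full-measure events over $R\in\N$. Your checks of stationarity, ergodicity, measurability, bounded finiteness of $M^{2/R}\le M$, and the zero-intensity case only make explicit what the paper asserts in Section~\ref{sec: thinned process}.

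One claim should be discarded: the closing density step cannot give a single full-measure event valid for \emph{all} $h\in L^1(\R_+,\pi)$. Suppose $\pi$ has no atoms and the ground process of $N^{2/R}$ has positive intensity. Take $\omega$ in such an event, and also in the event of part (a), and set $h:=\IF_{\{\rho_i(\omega):\,i\in\N\}}$. Then $h$ is bounded, Borel and $\pi$-a.e.\ zero, so the predicted limit is $0$. However, for $0\le f\in C^0_c(\R^n)$ with $f\not\equiv0$,
\[
\int f h\,\dd{N^{2/R,\omega}_\eps}=\int f\,\dd{N^{2/R,\omega}_{g,\eps}}\to\int f\,\dd{M^{2/R}_g}\neq0.
\]
A sandwich with rational step functions therefore only reaches a restricted class, for example $h$ continuous $\pi$-a.e.\ with $|h|\lesssim 1+\rho^{n-sp}$.

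This does not affect the lemma as it is used. The paper's proof also tacitly fixes $h$, and part (b) is invoked only in Lemma~\ref{lmm: approximation of integral}, with $h(\rho)=\rho^{n-sp}$. So your first option, intersecting over the countably many $h$ actually needed, is the correct reading. It is also more careful than the paper's one-line proof.
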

\begin{proof}
    Let $\truncParam \in \N$ be fixed and consider the thinned process $\PP^{2/\truncParam}$. Let $\sampleSpace_\truncParam^1$ denote the event of probability one given by Proposition~\ref{prop: ergodic theorem ground pp convergence of measures} applied to $\PP^{2/\truncParam}$ and $\sampleSpace_\truncParam^2$ the event of probability one given by Proposition~\ref{prop: ergodic theorem mpp convergence of measures} applied to $\PP^{2/\truncParam}$.  We conclude by setting $\sampleSpace' := \cap_{\truncParam \in \N} (\sampleSpace_\truncParam^1 \cap  \sampleSpace_\truncParam^2)$. 
\end{proof}

\begin{lmm}
    \label{lmm: limit of eps^n sum rho_i^n-sp 1_(R,infty)(rho_i)}
    There exists $\sampleSpace' \in \sigmaAlgebra$ with $\prob(\sampleSpace')=1$ such that for every $\omega \in \sampleSpace'$ and every $\truncParam \in \N$
    \begin{equation}\label{e:limit of eps^n sum rho_i^n-sp 1_(R,infty)(rho_i)}
        \lim_{\eps \downarrow 0} \eps^n \sum_{i \in \insideDomainEpsOmega} \rho_i(\omega)^{n-sp} \IF_{(\truncParam, \infty)}(\rho_i(\omega)) = \IG \L^n(\domain) \int_{\truncParam}^\infty \rho^{n-sp} \, \dd{\MD}(\rho).
    \end{equation}
\end{lmm}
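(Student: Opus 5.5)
The plan is to apply Proposition~\ref{prop: ergodic theorem, integral on eps^-1 U of h(rho)} once for each truncation level, with the mark function
\[
h_\truncParam(\rho) := \rho^{n-sp}\, \IF_{(\truncParam,\infty)}(\rho), \qquad \truncParam \in \N,
\]
and then intersect the resulting countably many events of full probability.

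First I check that $h_\truncParam \in L^1(\R_+,\MD)$. This holds because $0 \le h_\truncParam(\rho) \le \rho^{n-sp}$, and $\int_{\R_+} \rho^{n-sp}\,\dd{\MD}(\rho) < +\infty$ by \eqref{eq: integrability assumption on radii}. That integrability is equivalent to the moment condition \ref{h: moment condition} via \eqref{e:expectation rhoi vs pi} in Remark~\ref{rmk:On the assumptions}.

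Next I rewrite the sum in \eqref{e:limit of eps^n sum rho_i^n-sp 1_(R,infty)(rho_i)} as an integral against the counting measure. For $\omega \in \eventSimpleGround$ we have $\PP^\omega = \sum_i \delta_{(x_i(\omega),\rho_i(\omega))}$, and by definition $i \in \insideDomainEpsOmega$ if and only if $x_i(\omega) \in \eps^{-1}\domain$. Therefore
\[
\eps^n \sum_{i \in \insideDomainEpsOmega} \rho_i^{n-sp}\IF_{(\truncParam,\infty)}(\rho_i) = \eps^n \int_{(\eps^{-1}\domain)\times\R_+} h_\truncParam(\rho)\,\dd{\PP^\omega}(x,\rho).
\]
Proposition~\ref{prop: ergodic theorem, integral on eps^-1 U of h(rho)} applied with $h = h_\truncParam$ gives an event $\sampleSpace_{h_\truncParam}$ of probability one. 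On this event the right-hand side converges to
\[
\IG\,\L^n(\domain)\int_{\R_+} h_\truncParam\,\dd{\MD} = \IG\,\L^n(\domain)\int_\truncParam^\infty \rho^{n-sp}\,\dd{\MD}(\rho),
\]
using that $\domain$ is open, bounded and Lipschitz.

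Finally I set $\sampleSpace' := \eventSimpleGround \cap \bigcap_{\truncParam\in\N} \sampleSpace_{h_\truncParam}$. This is a countable intersection of events of probability one, so $\prob(\sampleSpace')=1$, and $\sampleSpace'$ works simultaneously for every $\truncParam \in \N$.

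There is no genuine obstacle here. The only points requiring care are that the exceptional null set must be made independent of $\truncParam$, which is why $\truncParam$ ranges over the countable set $\N$, and that the integrability of $h_\truncParam$ comes from the moment hypothesis.
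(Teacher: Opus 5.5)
Your proposal is correct and follows the same route as the paper. You apply Proposition~\ref{prop: ergodic theorem, integral on eps^-1 U of h(rho)} with $h_\truncParam(\rho)=\rho^{n-sp}\IF_{(\truncParam,\infty)}(\rho)$, which lies in $L^1(\R_+,\MD)$ by \eqref{eq: integrability assumption on radii}, and then intersect the countably many full-probability events over $\truncParam\in\N$; additionally intersecting with $\eventSimpleGround$ is harmless.
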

\begin{proof}
    Let $\truncParam \in \N$ be fixed and set $h_\truncParam(\rho):= \rho^{n-sp}\IF_{(\truncParam, \infty)}(\rho)$ for $\rho \in \R_+$. In view of \eqref{eq: integrability assumption on radii} we have $h_\truncParam \in L^1(\R_+,\MD)$. Therefore, by Proposition~\ref{prop: ergodic theorem, integral on eps^-1 U of h(rho)}, there exists $\Omega_\truncParam \in \sigmaAlgebra$ with $\prob(\Omega_\truncParam)=1$ such that \eqref{e:limit of eps^n sum rho_i^n-sp 1_(R,infty)(rho_i)} holds for every $\omega \in \Omega_\truncParam$. We conclude by defining $\Omega':= \cap_{\truncParam \in \N} \sampleSpace_\truncParam$. 
\end{proof}
\begin{lmm}
    \label{lmm: eps^n card(F_eps,R) -> 0 implies eps^n sum rho_i^n-sp -> 0}
    There exists $\sampleSpace' \in \sigmaAlgebra$ with $\prob(\sampleSpace')=1$ such that for every $\omega \in \Omega'$
    \begin{equation*}
        \lim_{\truncParam\uparrow\infty}\limsup_{\eps \downarrow 0} \eps^n \sum_{i \in F_{\eps,\truncParam}^\omega} \rho_i(\omega)^{n-sp} = 0,
    \end{equation*}
    whenever $F_{\eps,\truncParam}^\omega \subset \insideDomainEpsOmega$ for every $\eps>0$, $\truncParam \in \N$ and 
    \begin{equation}\label{e:card Feps}
    \lim_{\truncParam\uparrow\infty}\lim_{\eps \downarrow 0} \eps^n \# F_{\eps,\truncParam}^\omega =0.
    \end{equation}
\end{lmm}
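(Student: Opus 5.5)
The proof is a truncation argument: we split the sum according to whether $\rho_i \leq M$ or $\rho_i > M$, with $M\in\N$ an auxiliary threshold independent of $\truncParam$. We take $\sampleSpace'$ to be the event of probability one given by Lemma~\ref{lmm: limit of eps^n sum rho_i^n-sp 1_(R,infty)(rho_i)}, where \eqref{e:limit of eps^n sum rho_i^n-sp 1_(R,infty)(rho_i)} holds simultaneously for every threshold in $\N$. This choice matters because the family $F_{\eps,\truncParam}^\omega$ is arbitrary and may depend on $\omega$. The event therefore has to be fixed before $F$ is chosen, and countably many deterministic thresholds suffice.

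Fix $\omega\in\sampleSpace'$, a family $F_{\eps,\truncParam}^\omega\subset\insideDomainEpsOmega$ satisfying \eqref{e:card Feps}, and $M\in\N$. For every $\eps,\truncParam$ we write
\begin{equation*}
\eps^n \sum_{i \in F_{\eps,\truncParam}^\omega} \rho_i^{n-sp}
\leq M^{n-sp}\, \eps^n \# F_{\eps,\truncParam}^\omega + \eps^n \sum_{i \in \insideDomainEpsOmega} \rho_i^{n-sp} \IF_{(M,\infty)}(\rho_i).
\end{equation*}
For indices with $\rho_i\le M$ we used $\rho_i^{n-sp}\le M^{n-sp}$, which is valid since $n-sp>0$. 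For indices with $\rho_i>M$ we enlarged the index set from $F_{\eps,\truncParam}^\omega$ to $\insideDomainEpsOmega$, which is allowed because all terms are nonnegative.

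Taking $\limsup_{\eps\downarrow 0}$, the second term converges by Lemma~\ref{lmm: limit of eps^n sum rho_i^n-sp 1_(R,infty)(rho_i)} (applied with threshold $M$) to $\IG\L^n(\domain)\int_M^\infty\rho^{n-sp}\,\dd{\MD}(\rho)$, and this limit does not depend on $\truncParam$. Taking then $\truncParam\uparrow\infty$, the first term vanishes by \eqref{e:card Feps}. If the inner limit in \eqref{e:card Feps} is only a $\limsup$, the same argument applies verbatim. We obtain
\begin{equation*}
\limsup_{\truncParam\uparrow\infty}\limsup_{\eps \downarrow 0} \eps^n \sum_{i \in F_{\eps,\truncParam}^\omega} \rho_i^{n-sp}\le \IG\L^n(\domain)\int_M^\infty\rho^{n-sp}\,\dd{\MD}(\rho).
\end{equation*}
Finally we let $M\to\infty$. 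The right-hand side tends to zero by dominated convergence, thanks to the integrability \eqref{eq: integrability assumption on radii}. Since the quantity is nonnegative, the limit in $\truncParam$ exists and equals zero.

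I do not expect a real obstacle here. The only delicate point is the order of quantifiers: the full-measure event must be independent of $F$ and of $M$, and this is exactly what the countable intersection in Lemma~\ref{lmm: limit of eps^n sum rho_i^n-sp 1_(R,infty)(rho_i)} provides.
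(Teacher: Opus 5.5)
Your proposal is correct and matches the paper's proof essentially step for step. The paper uses the same full-measure event from the preceding lemma (valid simultaneously for all integer thresholds) and the same split of the sum at an auxiliary threshold (called $L$ there). It bounds the small-radius part by $L^{n-sp}\eps^n\#F_{\eps,\truncParam}^\omega$ and the large-radius part by the full sum over $\insideDomainEpsOmega$. It then takes the limits in $\eps$ and $\truncParam$ in the same order, and finally sends $L\to\infty$ using dominated convergence.
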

\begin{proof}
    Let $\Omega' \in \sigmaAlgebra$ be given by Lemma~\ref{lmm: limit of eps^n sum rho_i^n-sp 1_(R,infty)(rho_i)}. 
    Then, for every $\omega \in \sampleSpace'$ and every $L \in \N$, in view of \eqref{e:card Feps} we have
    \begin{equation*}
        \begin{split}
            &\limsup_{\truncParam\uparrow\infty}\limsup_{\eps \downarrow 0} \eps^n \sum_{i \in F^\omega_{\eps,\truncParam}} \rho_i(\omega)^{n-sp} \\
            &\leq \limsup_{\truncParam\uparrow\infty}\limsup_{\eps \downarrow 0} \eps^n \sum_{i \in F^\omega_{\eps,\truncParam}} \rho_i(\omega)^{n-sp} \IF_{[0,L]}(\rho_i(\omega)) +\lim_{\eps \downarrow 0} \eps^n \sum_{i \in \insideDomainEps^\omega} \rho_i(\omega)^{n-sp} \IF_{(L,\infty)}(\rho_i(\omega)) \\
            &\leq L^{n-sp}  \limsup_{\truncParam\uparrow\infty}\lim_{\eps \downarrow 0} \eps^n \# F^\omega_{\eps,\truncParam} + \IG \L^n(\domain) \int_L^\infty \rho^{n-sp} \, \dd{\MD}(\rho) \\
            &\leq \IG \L^n(\domain) \int_L^\infty \rho^{n-sp} \, \dd{\MD}(\rho). 
        \end{split}
    \end{equation*}
    Then, the claim follows by the arbitrariness of $L$ and by the Dominated Convergence Theorem.
\end{proof}
\subsection{Capacity of the not very good obstacles}
In this subsection we show that the capacity of the union of the \textit{not very good} obstacles is infinitesimal. To this end, we preliminarily need to prove a number of lemmas. 

\begin{lmm}
    \label{lmm:lq norm}
    There exists $\sampleSpace' \in \sigmaAlgebra$ with $\prob(\sampleSpace') = 1$ such that for every $\omega\in\sampleSpace'$ 
    \begin{equation}\label{e:lq norm vanishing}
        \lim_{\eps \downarrow 0}\|\lambda_\eps  \rho_i(\omega)\|_{\ell^q(\insideDomainEpsOmega)} = 0\,,
    \end{equation}
    for every $q\in(n-sp,\infty]$.
\end{lmm}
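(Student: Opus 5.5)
Everything reduces to the case $q=\infty$, which in turn follows from the ergodic theorem. Fix $\omega$ in the event of Lemma~\ref{lmm: limit of eps^n sum rho_i^n-sp 1_(R,infty)(rho_i)}; this event does not depend on $q$. The first step is the sup-norm estimate
\[
\|\lambda_\eps\rho_i\|_{\ell^\infty(\insideDomainEpsOmega)}\to0.
\]
Recall $\lambda_\eps^{n-sp}=\eps^n$. For any fixed $\truncParam\in\N$ and any $i\in\insideDomainEpsOmega$, either $\rho_i\le \truncParam$, in which case $(\lambda_\eps\rho_i)^{n-sp}\le \eps^n \truncParam^{n-sp}$, or $\rho_i>\truncParam$, in which case
\[
(\lambda_\eps\rho_i)^{n-sp}=\eps^n\rho_i^{n-sp}\le \eps^n\sum_{j\in\insideDomainEpsOmega}\rho_j^{n-sp}\IF_{(\truncParam,\infty)}(\rho_j).
\]
Taking the sup over $i$, letting $\eps\downarrow0$ and using \eqref{e:limit of eps^n sum rho_i^n-sp 1_(R,infty)(rho_i)}, we get
\[
\limsup_{\eps\downarrow0}\|\lambda_\eps\rho_i\|_{\ell^\infty}^{n-sp}\le \IG\L^n(\domain)\int_\truncParam^\infty\rho^{n-sp}\,\dd{\MD}(\rho).
\]
The right-hand side tends to $0$ as $\truncParam\to\infty$ by \eqref{eq: integrability assumption on radii} and dominated convergence, which gives the claim for $q=\infty$.

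For $q\in(n-sp,\infty)$ I would interpolate:
\[
\sum_{i\in\insideDomainEpsOmega}(\lambda_\eps\rho_i)^q\le \|\lambda_\eps\rho_i\|_{\ell^\infty}^{\,q-(n-sp)}\;\eps^n\sum_{i\in\insideDomainEpsOmega}\rho_i^{n-sp}.
\]
The second factor is bounded as $\eps\downarrow0$ by Proposition~\ref{prop: ergodic theorem, integral on eps^-1 U of h(rho)} with $h(\rho)=\rho^{n-sp}\in L^1(\MD)$. Alternatively, Lemma~\ref{lmm: limit of eps^n sum rho_i^n-sp 1_(R,infty)(rho_i)} gives the same bound: its $\truncParam=1$ case handles the $\rho_i>1$ part, and the $\rho_i\le1$ part is bounded by $\eps^n\#\insideDomainEpsOmega$. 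That count is bounded by the ergodic theorem applied to $h\equiv1$, whose event can be intersected into $\sampleSpace'$. Since $q>n-sp$, the first factor tends to $0$, so the $\ell^q$ norm vanishes.

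The only delicate point is the $\ell^\infty$ bound, where a single large radius must be controlled. The tail estimate through the truncated sums handles this uniformly in $i$. The full event $\sampleSpace'$ is a countable intersection of probability-one events, namely those of Lemma~\ref{lmm: limit of eps^n sum rho_i^n-sp 1_(R,infty)(rho_i)} and of Proposition~\ref{prop: ergodic theorem, integral on eps^-1 U of h(rho)} for $h=\rho^{n-sp}$ and $h\equiv1$, and it works for all $q$ simultaneously.
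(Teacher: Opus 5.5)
Your argument is correct, but it runs in the opposite order from the paper's.

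\textbf{The paper's route.} It treats finite $q$ first. It splits $\insideDomainEpsOmega$ according to whether $\lambda_\eps\rho_i\le\eps$ or not.
\begin{itemize}
\item On the first set it pulls out the factor $\eps^{q-(n-sp)}$ in front of $\eps^n\sum\rho_i^{n-sp}$.
\item For the exceptional set $F_\eps=\{i:\lambda_\eps\rho_i>\eps\}$ it shows $\eps^n\#F_\eps\to0$. Lemma~\ref{lmm: eps^n card(F_eps,R) -> 0 implies eps^n sum rho_i^n-sp -> 0} then gives $\eps^n\sum_{F_\eps}\rho_i^{n-sp}\to0$, and the embedding $\ell^{n-sp}\hookrightarrow\ell^q$ controls that part.
\item The case $q=\infty$ comes last, by letting $q\to\infty$ in the resulting estimate, which yields $\lambda_\eps\max_i\rho_i\le\eps+\bigl(\eps^n\sum_{F_\eps}\rho_i^{n-sp}\bigr)^{1/(n-sp)}$.
\end{itemize}

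\textbf{Your route.} You prove the $\ell^\infty$ statement directly. You truncate at a fixed radius level $\truncParam$ and bound a single large term by the whole tail sum from Lemma~\ref{lmm: limit of eps^n sum rho_i^n-sp 1_(R,infty)(rho_i)}. You then obtain every finite $q>n-sp$ from the interpolation $\sum a_i^q\le(\max_i a_i)^{q-(n-sp)}\sum a_i^{n-sp}$, together with boundedness of the full $(n-sp)$-moment sum.

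\textbf{Comparison.} Both proofs rest on the same probabilistic input, namely the almost sure tail limits for $\eps^n\sum\rho_i^{n-sp}\mathds{1}_{(\truncParam,\infty)}(\rho_i)$. Yours bypasses Lemma~\ref{lmm: eps^n card(F_eps,R) -> 0 implies eps^n sum rho_i^n-sp -> 0} and the density estimate for $F_\eps$, and is shorter and more transparent. The paper's version instead produces an explicit splitting: a small-radius part carrying the factor $\eps^{q-(n-sp)}$, plus a part supported on an index set of vanishing density. Splittings of this type reappear later, e.g.\ for $F_{\eps,\truncParam}$ in Lemma~\ref{lmm: eps^n card(inside minus thinnedInside) -> 0}.
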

\begin{proof}
    \intersectionOfEventsGivenBy Lemma~\ref{lmm: limit of eps^n sum rho_i^n-sp 1_(R,infty)(rho_i)} and Lemma~\ref{lmm: eps^n card(F_eps,R) -> 0 implies eps^n sum rho_i^n-sp -> 0}, respectively, and let  $\omega \in \sampleSpace'$.
    Set $F_\eps := \{i \in \insideDomainEps: \lambda_\eps \rho_i > \eps\}$. 
    Recalling that $\lambda_\eps^{n-sp}=\eps^n$, by the very definition of $F_\eps$ and by Lemma~\ref{lmm: limit of eps^n sum rho_i^n-sp 1_(R,infty)(rho_i)} we get
    \begin{equation}\label{e:card-F}
        \limsup_{\eps \downarrow 0} \eps^n \# F_\eps^\omega 
        \leq \limsup_{\eps \downarrow 0}\eps^{sp} \sum_{i \in F_\eps^\omega} (\lambda_\eps\rho_i(\omega))^{n-sp}
        \leq \lim_{\eps \downarrow 0}\eps^{sp} \eps^{n}\sum_{i \in \insideDomainEps^\omega}  \rho_i(\omega)^{n-sp} = 0\,.
    \end{equation}
     For every $q\in(n-sp,\infty)$, writing $\insideDomainEps=(\insideDomainEps\setminus F_{\eps})\cup F_{\eps}$, in view of the continuous embedding $\ell^{n-sp}(\insideDomainEps)\hookrightarrow\ell^{q}(\insideDomainEps)$ (with embedding constant equal to one)
    we obtain
\begin{align}\label{e:lq norm estimate}
\sum_{i \in \insideDomainEps} (\lambda_\eps \rho_i)^q &= 
\sum_{i \in \insideDomainEps\setminus F_\eps} (\lambda_\eps \rho_i)^q +
\sum_{i \in F_\eps} (\lambda_\eps \rho_i)^q\notag\\ 
&\leq\eps^{q-sp}\sum_{i \in \insideDomainEps\setminus F_\eps} 
\rho_i^{n-sp}+\Big(\eps^n\sum_{i \in F_\eps} \rho_i^{n-sp}\Big)^{\frac{q}{n-sp}}\,,
\end{align}
where in the last line we used again that $\lambda_\eps^{n-sp}=\eps^n$.
Then \eqref{e:lq norm vanishing} follows from \eqref{e:lq norm estimate}, 
Lemma~\ref{lmm: limit of eps^n sum rho_i^n-sp 1_(R,infty)(rho_i)}, and
Lemma~\ref{lmm: eps^n card(F_eps,R) -> 0 implies eps^n sum rho_i^n-sp -> 0} also invoking \eqref{e:card-F}. 

Finally, by taking the $q$-th root in \eqref{e:lq norm estimate}, using the Minkowski Inequality, and letting $q\to\infty$, it is easy to infer that
\[
\lambda_{\eps}\max_{i \in \insideDomainEps} \rho_i\leq\eps+
\Big(\eps^n \sum_{i \in F_\eps} \rho_i^{n-sp}\Big)^{\frac{1}{n-sp}}\,,
\]
hence Lemma~\ref{lmm: eps^n card(F_eps,R) -> 0 implies eps^n sum rho_i^n-sp -> 0}
yields the conclusion for $q=\infty$, as well.
\end{proof}

The next lemma provides us with some useful information concerning the cardinality of the of indices introduced in Subsection \ref{subsec:set-of-indices} which are instrumental for the sequel. 
\begin{lmm}\label{lmm: eps^n card(inside minus thinnedInside) -> 0}
    There exists $\sampleSpace' \in \sigmaAlgebra$ with $\prob(\sampleSpace') =1$ such that for every $\omega \in \sampleSpace'$:
    \begin{enumerate}[label=(\alph*)]
    \item \label{enum: card sets insideDomainEps minus thinnedInsideDomainEps} $\lim_{\truncParam\uparrow\infty}\lim_{\eps\downarrow0} \eps^n\#(\insideDomainEpsOmega \setminus \thinnedInsideDomainEpsOmega) = 0$;
    
    \smallskip
    
    \item \label{enum: card sets thinnedInsideDomainEps minus goodEps}$\lim_{\truncParam\uparrow\infty}\limsup_{\eps\downarrow0} \eps^n\#(\thinnedInsideDomainEpsOmega\setminus\goodEpsOmega) = 0$;
    
    \smallskip
    
    \item \label{enum: card sets goodEps minus veryGoodEps}$\lim_{\truncParam\uparrow\infty}\lim_{\eps \downarrow 0}\eps^n \# (\goodEps^\omega \setminus \veryGoodEps^\omega) = 0$;
    
    \smallskip 
    
    \item \label{enum: card sets notVeryGoodEps}$\lim_{\truncParam\uparrow\infty}\lim_{\eps \downarrow 0}\eps^n \#\notVeryGoodEps^\omega =0$.
    \end{enumerate}    
\end{lmm}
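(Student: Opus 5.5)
The plan is to prove the four items in order, each on an event of probability one. The final $\sampleSpace'$ will be the intersection of the events given by Proposition~\ref{prop: ergodic theorem, integral on eps^-1 U of h(rho)} (with $h\equiv 1$ and $h=\IF_{(\truncParam,\infty)}$ for all $\truncParam\in\N$), by Lemma~\ref{lmm: ergodic thm pp measure convergence for any thinned process N^2/R}, and by Lemma~\ref{lmm:lq norm}.

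\emph{Item (a).} By definition $\insideDomainEps\setminus\thinnedInsideDomainEps$ consists of the indices in $\insideDomainEps$ that are not in $I^{2/\truncParam}$. Hence
\[
\eps^n\#(\insideDomainEps\setminus\thinnedInsideDomainEps)=\eps^n\PPg(\eps^{-1}\domain)-\eps^n\PPg^{2/\truncParam}(\eps^{-1}\domain).
\]
Proposition~\ref{prop: ergodic theorem, integral on eps^-1 U of h(rho)} with $h\equiv1$, applied to $\PP$ and to the stationary ergodic thinned process $\PP^{2/\truncParam}$, gives the limit
\[
\IG\L^n(\domain)\Big(1-\int_{\R_+}\probLawPP_{(0,\rho)}(Y_{2/\truncParam})\,\dd{\MD}(\rho)\Big),
\]
where we use \eqref{eq: expectation measure thinned process}. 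For the thinned process the domain statement follows from Lemma~\ref{lmm: ergodic thm pp measure convergence for any thinned process N^2/R} by a standard approximation of $\IF_\domain$ by $C_c$ functions, since $\partial\domain$ is Lebesgue-null. By \eqref{eq: Palm limit delta} and dominated convergence, this limit tends to $0$ as $\truncParam\to\infty$.

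\emph{Item (b).} An index in $\thinnedInsideDomainEps\setminus\goodEps$ has either $\rho_i>\truncParam$ or $\dist(\eps x_i,\partial\domain)\le\eps/\truncParam$. The first set has normalized cardinality tending to $\IG\L^n(\domain)\MD((\truncParam,\infty))$, which goes to $0$ as $\truncParam\to\infty$. For the second, for $\eps\le \eta\truncParam$ it is contained in $\{\eps x_i\in\domain\cap(\partial\domain)_\eta\}$ for fixed $\eta>0$. Its limsup is bounded by $\IG\L^n(\overline{(\partial\domain)_\eta})$, by testing Proposition~\ref{prop: ergodic theorem ground pp convergence of measures} against a $C_c$ function dominating the indicator. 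Letting $\eta\to0$ and using that $\partial\domain$ is Lipschitz, hence null, gives $0$.

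\emph{Item (c).} This is the main step. If $i\in\goodEps\setminus\veryGoodEps$, there is $j\in\insideDomainEps$, $j\ne i$, with $|\eps x_i-\eps x_j|<\eps/\truncParam+2\lambda_\eps\rho_j$. Split according to whether $2\lambda_\eps\rho_j\le\eps/\truncParam$ or not.
\begin{itemize}
\item In the first case $|x_i-x_j|<2/\truncParam$, which contradicts $i\in I^{2/\truncParam}$. This case is empty.
\item In the second case $j\in F:=\{j\in\insideDomainEps:\lambda_\eps\rho_j>\eps/(2\truncParam)\}$. The key is to bound, for each such $j$, how many good $i$ it can spoil. The balls $B_{\eps/\truncParam}(\eps x_i)$, $i\in\goodEps$, are disjoint and all lie in $B_{2\eps/\truncParam+2\lambda_\eps\rho_j}(\eps x_j)$. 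Comparing volumes, the number of such $i$ is at most $C(1+\truncParam\lambda_\eps\rho_j/\eps)^n\le C'(\truncParam\lambda_\eps\rho_j/\eps)^n$.
\end{itemize}
Consequently
\[
\eps^n\#(\goodEps\setminus\veryGoodEps)\lesssim \truncParam^n\sum_{j\in F}(\lambda_\eps\rho_j)^n.
\]
Lemma~\ref{lmm:lq norm} with $q=n>n-sp$ shows that the right-hand side tends to $0$ as $\eps\to0$ for each fixed $\truncParam$. This is the crux; if the count proves too crude I would instead bound it by $\truncParam^{n-sp}\eps^{-sp}\max_j(\lambda_\eps\rho_j)^{sp}\,\eps^{n-sp}\cdots$, but the $\ell^n$ route should suffice.

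\emph{Item (d).} This follows from the decomposition
\[
\notVeryGoodEps=(\insideDomainEps\setminus\thinnedInsideDomainEps)\cup(\thinnedInsideDomainEps\setminus\goodEps)\cup(\goodEps\setminus\veryGoodEps),
\]
together with (a)–(c).
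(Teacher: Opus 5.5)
Your proposal is correct. Items (a) and (d) follow the paper's argument, while (b) and (c) differ in two local choices worth noting.

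\emph{Item (b).} For the indices with $\dist(\eps x_i,\partial U)\le \eps/R$, the paper does not use the ergodic theorem. Its displays write $>$ there, a typo that you read correctly as $\le$. The balls $B_{\eps/R}(\eps x_i)$, $i\in I^{2/R}_\eps$, are pairwise disjoint and lie in $(\partial U)_{2\eps/R}$. Hence $\eps^n\#\{\cdot\}\lesssim_n R^n\mathcal{L}^n((\partial U)_{2\eps/R})\to0$ deterministically for each $R$. Your route is equally valid and does not use the thinning: vague convergence tested on a $C_c$ majorant of a fixed layer $(\partial U)_\eta$, then $\eta\to0$.

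\emph{Item (c).} The paper splits the witnesses $j$ into two families. The first is $F_{\eps,R}\subset I^{2/R}_\eps$, thinned indices with $2\lambda_\eps\rho_j>\eps/R$. The second is $I_\eps\setminus I^{2/R}_\eps$, with arbitrary radius. The second family, $E^2_{\eps,R}$, is controlled only through item (a), which is why the paper's (c) needs $R\to\infty$.

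Your observation is sharper: a witness with $2\lambda_\eps\rho_j\le\eps/R$ would force $|x_i-x_j|<2/R$, contradicting $i\in I^{2/R}$, whether or not $j$ itself is thinned. So $E^2_{\eps,R}$ is redundant, and you obtain $\lim_{\eps\downarrow0}\eps^n\#(G_{\eps,R}\setminus VG_{\eps,R})=0$ for every fixed $R$.

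Your per-witness packing count and the paper's volume-of-union bound are the same estimate. Both use $(\eps/R)^n\lesssim(\lambda_\eps\rho_j)^n$ on $F$ and reduce to $R^n\sum_{i\in I_\eps}(\lambda_\eps\rho_i)^n$, which vanishes by Lemma~\ref{lmm:lq norm} with $q=n$. The fallback bound you mention at the end is therefore unnecessary.
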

\begin{proof}
    Let $\sampleSpace' \in \sigmaAlgebra$ be the intersection of the events of probability one given by Lemmas \ref{lmm: ergodic thm pp measure convergence for any thinned process N^2/R}-\ref{lmm:lq norm} and let $\omega \in \Omega'$. 
    
    We start by proving statement \ref{enum: card sets insideDomainEps minus thinnedInsideDomainEps}. By the 
    weak* convergence in the sense of measures established in Lemma~\ref{lmm: ergodic thm pp measure convergence for any thinned process N^2/R} and by the Lipschitz regularity of $\partial U$, \cite[Proposition~1.62(b)]{AFP} implies
    \begin{equation*}
        \begin{split}
            &\lim_{\truncParam \uparrow \infty} \lim_{\eps \downarrow 0} \eps^n \# (\insideDomainEps^\omega \setminus \thinnedInsideDomainEpsOmega) = \lim_{\truncParam \uparrow \infty} \lim_{\eps \downarrow 0} \eps^n (\PP^\omega_{g,\eps}(\eps^{-1}\domain) - \PP_{g,\eps}^{2/\truncParam,\omega}(\eps^{-1}\domain)) \\
            &= \lim_{\truncParam \uparrow \infty} (\EM^\omega_g(\domain) - \EM_g^{2/\truncParam,\omega}(\domain)) = 0,
        \end{split}
    \end{equation*}
    hence \ref{enum: card sets insideDomainEps minus thinnedInsideDomainEps} is proven.
    
    We now turn to the proof of \ref{enum: card sets thinnedInsideDomainEps minus goodEps}. To this end, we observe that
    \begin{equation}\label{e:dec_index_set}
            \thinnedInsideDomainEps\setminus \goodEps \subset \{i \in \insideDomainEps : \rho_i > \truncParam\} \cup \left\{i \in \thinnedInsideDomainEps: \dist(\eps x_i, \partial\domain) > \frac{\eps}{\truncParam}\right\}.
        \end{equation}
        Moreover, appealing to Lemma~\ref{lmm: limit of eps^n sum rho_i^n-sp 1_(R,infty)(rho_i)} we have
        \begin{equation}\label{e:card-first-set}
            \begin{split}
                &\lim_{\truncParam\uparrow\infty}\limsup_{\eps\downarrow0}\eps^n \#\{i \in \insideDomainEps^\omega : \rho_i(\omega) > \truncParam\} \leq \lim_{\truncParam\uparrow\infty}\lim_{\eps\downarrow0}\eps^n\sum_{i \in \insideDomainEps^\omega} \rho_i(\omega)^{n-sp} \IF_{(\truncParam,\infty)}(\rho_i(\omega)) \\
                &= \IG\L^n(\domain)\lim_{\truncParam\uparrow\infty}\int_{\truncParam}^\infty \rho^{n-sp} \, \dd{\MD}(\rho) = 0.
            \end{split}
        \end{equation}
        Finally, since $\{\ballContainingObstacleEpsI : i \in \thinnedInsideDomainEps, \, \dist(\eps x_i, \partial \domain) > \frac{\eps}{\truncParam}\}$ are pairwise disjoint and their union is contained in $(\partial \domain)_{2 \frac{\eps}{\truncParam}}$, we get
        \begin{equation}\label{e:card-second-set}
            \lim_{\eps\downarrow0}\eps^n\#\left\{i \in \thinnedInsideDomainEps: \dist(\eps x_i, \partial\domain) > \frac{\eps}{\truncParam}\right\}\lesssim_n \truncParam^n \lim_{\eps\downarrow0}\L^n((\partial\domain)_{2 \frac{\eps}{\truncParam}}) = 0.
    \end{equation}
    Therefore, the statement in \ref{enum: card sets thinnedInsideDomainEps minus goodEps} follows by gathering \eqref{e:dec_index_set}-\eqref{e:card-second-set}.

    To prove \ref{enum: card sets goodEps minus veryGoodEps} we start by observing that 
    \begin{equation}\label{e:good-without-verygood-indices} 
        \goodEps \setminus \veryGoodEps = \{i \in \goodEps: \exists j \in \insideDomainEps, \, j \neq i, \, \ballContainingObstacleEpsI \cap \ballDoubleCriticalRadiusEpsJ \neq \emptyset\}.
    \end{equation}
    By the very definition of $\goodEps$, we notice that if $i\in\goodEps \setminus \veryGoodEps$ and $j$ is as in \eqref{e:good-without-verygood-indices}, then necessarily $j\in(\insideDomainEps\setminus\thinnedInsideDomainEps)\cup F_{\eps,\truncParam}$, where
    \[
    F_{\eps,\truncParam} := \left\{i \in \thinnedInsideDomainEps : 2\lambda_\eps \rho_i > \frac{\eps}{\truncParam}\right\}\,.
    \]
    Define the set of indices 
    \begin{align*}
        &E_{\eps,\truncParam}^1 := \{i \in \goodEps : \exists j \in F_{\eps,\truncParam}, \, j \neq i, \, \ballContainingObstacleEpsI \cap \ballDoubleCriticalRadiusEpsJ \neq \emptyset\}, \\
        &E_{\eps,\truncParam}^2 := \{i \in \goodEps: \exists j \in \insideDomainEps\setminus \thinnedInsideDomainEps,\, j\neq i, \, \ballContainingObstacleEpsI \cap \ballDoubleCriticalRadiusEpsJ \neq \emptyset\},
    \end{align*}
    so that
    \[
    \goodEps \setminus \veryGoodEps = E^1_{\eps,\truncParam} \cup E^2_{\eps,\truncParam}.
    \] 
    Hence, to prove \ref{enum: card sets goodEps minus veryGoodEps} it is enough to show that almost surely
    \[
    \lim_{R\uparrow +\infty}\lim_{\eps\downarrow0}\eps^n \# E_{\eps,\truncParam}^{k}, \quad \text{for }\; k=1,2.
    \]
    Since $(2\truncParam)^{n-sp} \rho_i^{n-sp} \eps^{sp} > 1$ for any $i \in F_{\eps,\truncParam}$, Lemma~\ref{lmm: limit of eps^n sum rho_i^n-sp 1_(R,infty)(rho_i)} yields
    \begin{equation}\label{e:FepsR}
        \lim_{\eps \downarrow 0}\eps^n \# F^\omega_{\eps,\truncParam} \lesssim \truncParam^{n-sp} \lim_{\eps \downarrow 0}\eps^{sp} \eps^n \sum_{i \in \insideDomainEps^\omega} \rho_i(\omega)^{n-sp} = 0.
    \end{equation}
    Moreover, since $\{\ballContainingObstacleEpsI\}_{i \in \goodEps}$ are pairwise disjoint we have
    \[
    \eps^n \# E_{\eps,\truncParam}^k \lesssim \truncParam^n \L^n (\cup_{i \in E_{\eps,\truncParam}^k} \ballContainingObstacleEpsI)\quad
    \text{for }\; k=1,2.
    \]
    We also observe that for every $i \in E_{\eps,\truncParam}^1$ there exists $j \in F_{\eps,\truncParam}$, $j\neq i$, such that $\ballContainingObstacleEpsI \cap \ballDoubleCriticalRadiusEpsJ \neq \emptyset$, and hence $\ballContainingObstacleEpsI \subset B_{2(\lambda_\eps \rho_j + \frac{\eps}{\truncParam})}(\eps x_j)$. Therefore, appealing to Lemma~\ref{lmm:lq norm} with $q=n$ and to \eqref{e:FepsR}
    \begin{equation*}
        \begin{split}
            &\lim_{\eps\downarrow0}\eps^n \# E_{\eps,\truncParam}^{1,\omega} \lesssim \truncParam^n \lim_{\eps \downarrow0}\sum_{i \in \insideDomainEps^\omega} (\lambda_\eps \rho_i(\omega))^n + \lim_{\eps \downarrow 0}\eps^n \# F^\omega_{\eps,\truncParam} = 0.
        \end{split}
    \end{equation*}
    Similarly, again invoking Lemma~\ref{lmm:lq norm} with $q=n$, in view of Lemma~\ref{lmm: eps^n card(inside minus thinnedInside) -> 0} we get
    \begin{equation*}
        \begin{split}
            &\lim_{\truncParam\uparrow\infty}\lim_{\eps \downarrow 0}\eps^n \#E_{\eps,\truncParam}^{2,\omega} \lesssim \lim_{\truncParam\uparrow\infty} \lim_{\eps \downarrow 0} \truncParam^n  \sum_{i \in \insideDomainEps^\omega} (\lambda_\eps \rho_i(\omega))^n + \lim_{\truncParam\uparrow\infty}\lim_{\eps \downarrow 0}\eps^n \#(\insideDomainEps^\omega\setminus I_\eps^{2/R,\omega}) =0,
        \end{split}
    \end{equation*}
    thus \ref{enum: card sets goodEps minus veryGoodEps} is achieved.
    
    Finally, we notice that the convergence statement in \ref{enum: card sets notVeryGoodEps} readily follows from \ref{enum: card sets insideDomainEps minus thinnedInsideDomainEps}-\ref{enum: card sets goodEps minus veryGoodEps}, observing that
    
    \begin{equation*}
        \notVeryGoodEps = (\insideDomainEps \setminus \thinnedInsideDomainEps) \cup (\thinnedInsideDomainEps \setminus \goodEps) \cup (\goodEps \setminus \veryGoodEps).
    \end{equation*}
\end{proof}

Thanks to the subadditivity of the fractional capacity, on account of Lemma~\ref{lmm: eps^n card(inside minus thinnedInside) -> 0} we are now able to prove that the relative capacity of the union of the \emph{not very good} obstacles with respect to a \emph{safety layer} (made by the union of balls with the same center and double radius) vanishes \emph{almost surely} when $\eps \to 0$ and $R\to +\infty$.    
\begin{lmm}
    \label{lmm: NVG have vanishing relative capacity}
    \sureEvent  
    \begin{equation*}
        \lim_{\truncParam \uparrow \infty} \limsup_{\eps \downarrow 0} C_{s,p}\left(\bigcup_{i \in \notVeryGoodEpsOmega }  \ballCriticalRadiusEpsIomega , \bigcup_{i \in \notVeryGoodEpsOmega }  \ballDoubleCriticalRadiusEpsIomega \right) = 0.
    \end{equation*}
\end{lmm}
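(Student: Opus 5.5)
The plan is to bound the relative capacity by the capacity of an explicit competitor, using the subadditivity of the capacity in the form of a ``max'' construction. After that the claim reduces to the counting estimates of Lemma~\ref{lmm: eps^n card(inside minus thinnedInside) -> 0} and Lemma~\ref{lmm: eps^n card(F_eps,R) -> 0 implies eps^n sum rho_i^n-sp -> 0}. Here $\CSP(T,A)$ is understood as $\CSPK(T,A)$ for the standard kernel, with the ball $B_R$ replaced by the open set $A$. That is, it is the infimum of $|u|^p_{\Wsp(\R^n)}$ over $u$ vanishing outside $\overline A$ with $\tilde u\ge1$ q.e.\ on $T$.

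\textbf{Step 1: the competitor.} Fix $\omega$ in the intersection of the events of probability one given by Lemmas~\ref{lmm: limit of eps^n sum rho_i^n-sp 1_(R,infty)(rho_i)}, \ref{lmm: eps^n card(F_eps,R) -> 0 implies eps^n sum rho_i^n-sp -> 0} and \ref{lmm: eps^n card(inside minus thinnedInside) -> 0}. For $i\in\notVeryGoodEps$ let $\varphi_i:=\varphi(\cdot-\eps x_i)/(\lambda_\eps\rho_i))$ be a rescaled copy of a fixed function $\varphi\in\Wsp(\R^n)$ with $\varphi=1$ on $B_1$, $\varphi=0$ off $B_2$, $0\le\varphi\le1$. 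For instance, take a Lipschitz cutoff between $B_1$ and $B_2$, or an almost minimizer of $\CSP(B_1,B_2)$. Indices with $\rho_i=0$ are discarded, since their obstacles are empty. Then set
\[
u_\eps:=\max_{i\in\notVeryGoodEps}\varphi_i .
\]
Only finitely many indices occur, because $\insideDomainEps$ is finite for $\omega\in\eventSimpleGround$ ($\domain$ is bounded and $\PP^\omega$ is boundedly finite). Hence $u_\eps\in\Wsp(\R^n)$. Moreover, $u_\eps=1$ on $\bigcup_{i\in\notVeryGoodEps}\ballCriticalRadiusEpsIomega$ and $u_\eps=0$ outside $\bigcup_{i\in\notVeryGoodEps}\ballDoubleCriticalRadiusEpsIomega$, so $u_\eps$ is admissible.

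\textbf{Step 2: energy estimate.} The pointwise inequality $|\max_i a_i-\max_i b_i|\le\max_i|a_i-b_i|$ gives $|u_\eps(x)-u_\eps(y)|^p\le\sum_i|\varphi_i(x)-\varphi_i(y)|^p$. Integrating against the kernel and using the $(n-sp)$-homogeneity of the seminorm under dilations, we obtain
\[
\CSP\Big(\bigcup_{i\in\notVeryGoodEps}\ballCriticalRadiusEpsIomega,\bigcup_{i\in\notVeryGoodEps}\ballDoubleCriticalRadiusEpsIomega\Big)\le|\varphi|^p_{\Wsp(\R^n)}\sum_{i\in\notVeryGoodEps}(\lambda_\eps\rho_i)^{n-sp}=|\varphi|^p_{\Wsp(\R^n)}\,\eps^n\sum_{i\in\notVeryGoodEps}\rho_i^{n-sp},
\]
where the last equality uses $\lambda_\eps^{n-sp}=\eps^n$.

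\textbf{Step 3: conclusion.} Apply Lemma~\ref{lmm: eps^n card(F_eps,R) -> 0 implies eps^n sum rho_i^n-sp -> 0} with $F_{\eps,\truncParam}:=\notVeryGoodEps\subset\insideDomainEps$. Its hypothesis \eqref{e:card Feps} is exactly Lemma~\ref{lmm: eps^n card(inside minus thinnedInside) -> 0}\ref{enum: card sets notVeryGoodEps}. The lemma then gives $\lim_{\truncParam}\limsup_\eps\eps^n\sum_{\notVeryGoodEps}\rho_i^{n-sp}=0$, which proves the claim. (Lemma~\ref{lmm: eps^n card(inside minus thinnedInside) -> 0}\ref{enum: card sets notVeryGoodEps} is stated with an inner $\lim$; if only a $\limsup$ is available, the proof of Lemma~\ref{lmm: eps^n card(F_eps,R) -> 0 implies eps^n sum rho_i^n-sp -> 0} goes through verbatim.)

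The only delicate point is Step 2. There one must make sure the relative capacity is taken with the \emph{global} seminorm on $\R^n$, so that the max-subadditivity applies to the full double integral without any locality defect. One must also check that the overlaps among the balls of $\notVeryGoodEps$, which is precisely where clusters sit, cause no harm. They do not, since the pointwise max bound needs no disjointness.
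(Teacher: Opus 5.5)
Your proof is correct and follows the paper's argument. The paper bounds the relative capacity by $\CSP(B_1,B_2)\,\eps^n\sum_{i\in\notVeryGoodEps}\rho_i^{n-sp}$ by citing subadditivity and scaling of the relative capacity; your max-of-rescaled-cutoffs competitor is just a hands-on proof of that same subadditivity. The paper then concludes exactly as you do, combining Lemma~\ref{lmm: eps^n card(F_eps,R) -> 0 implies eps^n sum rho_i^n-sp -> 0} with Lemma~\ref{lmm: eps^n card(inside minus thinnedInside) -> 0}\ref{enum: card sets notVeryGoodEps}, including the same harmless $\lim$ versus $\limsup$ issue you flagged.
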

\begin{proof}
    Let $\sampleSpace'\in \sigmaAlgebra$ be the intersection of the events of probability one given by Lemma~\ref{lmm: eps^n card(F_eps,R) -> 0 implies eps^n sum rho_i^n-sp -> 0} and Lemma~\ref{lmm: eps^n card(inside minus thinnedInside) -> 0} and let $\omega \in \sampleSpace'$. We can readily deduce that  
   \begin{equation}\label{e:NVG have vanishing capacity}
        \lim_{\truncParam \uparrow \infty} \limsup_{\eps \downarrow 0} \eps^n \sum_{i \in \notVeryGoodEpsOmega} \rho_i(\omega)^{n-sp} = 0.
    \end{equation}
    By the subadditivity and the scaling property of the fractional capacity (cf.\ \cite[Theorem~2.4 (a), (e)]{shi-xiao-relative-fractional-capacities}) we get
    \begin{equation*}
        \begin{split}
            &\lim_{\truncParam \uparrow \infty} \limsup_{\eps \downarrow 0} C_{s,p}\left(\bigcup_{i \in \notVeryGoodEpsOmega  }  \ballCriticalRadiusEpsIomega , \bigcup_{i \in \notVeryGoodEpsOmega  }  B_{2 \lambda_\eps \rho_i(\omega)}(\eps x_i(\omega)) \right) \\
            &\leq \lim_{\truncParam \uparrow \infty} \limsup_{\eps \downarrow 0} \sum_{i \in \notVeryGoodEpsOmega } C_{s,p}\left(\ballCriticalRadiusEpsIomega, \ballDoubleCriticalRadiusEpsIomega\right) \\
            &= C_{s,p}(B_1, B_2) \lim_{\truncParam \uparrow \infty} \limsup_{\eps \downarrow 0} \eps^n \sum_{i \in \notVeryGoodEpsOmega }  \rho_i(\omega)^{n-sp} = 0,
        \end{split}
    \end{equation*}
    where to establish the very last inequality we used \eqref{e:NVG have vanishing capacity}.
\end{proof}
\subsection{A Joining Lemma for nonlocal functionals}
For the readers' convenience in this subsection we recall the statement of a joining lemma for \emph{nonlocal} functionals originally proven in \cite{focardi-aperiodic-fractional}. This result is an adaptation to the nonlocal setting of the Ansini and Braides Joining Lemma for \emph{local} nonlinear energy functionals  \cite{ansini-braides}. In our case, where clustering obstacles can occur with positive probability, the Joining Lemma will be applied
only in a neighborhood of the so-called good obstacles. 

\medskip

The following result is preliminary to the Joining Lemma. 

\begin{lmm}
    \label{lmm: unscaled joining lemma}
Suppose that $G$ is a finite subset of $\N$ and $\{x_i\}_{i \in G}$ is a collection of points such that, for some $r_G > 0$, $\frac{1}{2} \min_{l\neq i \in G} |x_l - x_i| \geq r_G$ and $\cup_{i \in G} B_{r_G}(x_i) \subset \domain$. Let $\rho \in (0, \frac{1}{2}r_G)$. For $i \in G$, let $A_i' = x_i + B_{m^{-1}\rho} \setminus \overline{B}_{m^{-2} \rho}$ and $A_i = x_i + B_{\rho} \setminus \overline{B}_{m^{-3}\rho}$. Let $\varphi$ be a cutoff function between $B_{m^{-1}\rho} \setminus \overline{B}_{m^{-2} \rho}$ and $B_{\rho} \setminus \overline{B}_{m^{-3}\rho}$ and $\varphi_i(\cdot) = \varphi(\cdot - x_i)$. 
Then there exists $c = c(n,p,s) > 0$ such that for any $u \in \Wsp(\domain)$ and $\{z_i\}_{i \in G} \subset \R$, the function $w: \domain \to \R^n$ defined by 
\begin{equation*}
    w(x) := \sum_{i \in G} \varphi_i(x) z_i + \left(1-\sum_{i \in G} \varphi_i(x)\right) u(x) \quad \forall x \in \domain
\end{equation*} 
is such that $w \in \Wsp(\domain)$, $w=u$ on $\domain \setminus \overline{A}$, where $A:= \cup_{i \in G} A_i$, and, for any $\L^n$-measurable set $E \subset \domain\times \domain$, 
\begin{equation*}
    |\Dsp(w,E) - \Dsp(u,E)| 
    \leq c \left( \Dsp(u,\domain\times A) + m^{2p} \rho^{-sp} \sum_{i \in G} \int_{A_i} |u-z_i|^p \right).
\end{equation*}
\end{lmm}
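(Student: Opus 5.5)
The plan is to write $\Psi:=\sum_{i\in G}\varphi_i$, which takes values in $[0,1]$ and is supported in $\overline{A}$ because the $A_i$ are pairwise disjoint ($A_i\subset B_\rho(x_i)$ with $\rho<r_G/2$). We then have
\[
w-u=\sum_{i\in G}\varphi_i\,(z_i-u),
\]
so $w=u$ off $\overline A$ at once. Everything reduces to comparing the pair increments
\[
w(x)-w(y)=\big(1-\Psi(x)\big)\big(u(x)-u(y)\big)+\big(v(x)-v(y)\big),\qquad v:=\sum_i\varphi_i (z_i-u(y))\ \text{evaluated suitably}.
\]
Concretely, I would use the identity
\[
w(x)-w(y)=(1-\Psi(x))(u(x)-u(y))+\sum_i(\varphi_i(x)-\varphi_i(y))(z_i-u(y)),
\]
and its symmetric counterpart with $x,y$ exchanged.

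We have $|a+b|^p\le |a|^p+c(|a|^{p-1}|b|+|b|^p)$ and $|a|\le|u(x)-u(y)|$. Hence
\[
\big||w(x)-w(y)|^p-|u(x)-u(y)|^p\big|\lesssim |u(x)-u(y)|^p\,\IF_{\{(x,y)\in A\times\domain\cup\domain\times A\}}+|b(x,y)|^p ,
\]
after absorbing the cross term by Young's inequality. The first term uses that the integrand only changes when $x$ or $y$ lies in $\overline A$, since $\varphi_i(x)=\varphi_i(y)=0$ otherwise. Integrating over $E$ and using the symmetry of the kernel, the first contribution is bounded by $2\Dsp(u,\domain\times A)$, and $\partial A$ is null.

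The remaining task, which is the main one, is to estimate
\[
\int_{\domain\times\domain}\frac{|\sum_i(\varphi_i(x)-\varphi_i(y))(z_i-u(y))|^p}{|x-y|^{n+sp}}.
\]
For fixed $(x,y)$, at most two indices contribute, one with $x\in A_i$ and one with $y\in A_l$. By symmetry, I reduce to terms of the form $|\varphi_i(x)-\varphi_i(y)|^p|z_i-u(y)|^p$. I split $|z_i-u(y)|\le|z_i-u(x)|+|u(x)-u(y)|$ when $x\in A_i$, and use $|\varphi_i(x)-\varphi_i(y)|\le 1$ for the second piece, which again gives $\Dsp(u,A\times\domain)$. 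For the first piece I use
\[
\int_{\R^n}\frac{|\varphi_i(x)-\varphi_i(y)|^p}{|x-y|^{n+sp}}\,\dd y\lesssim \lip(\varphi)^{sp}\lesssim_{n,s,p}\big(m^{-1}\rho-m^{-2}\rho\big)^{-sp}\lesssim m^{2p}\rho^{-sp}.
\]
This follows by splitting into $|x-y|<\lip(\varphi)^{-1}$ (Lipschitz bound) and its complement (bound $1$), together with $\dist(A'_i,\partial A_i)\gtrsim m^{-2}\rho$ for $m\ge2$; note that $sp<p$ and $m\ge1$, so $m^{2sp}\le m^{2p}$. This yields $m^{2p}\rho^{-sp}\int_{A_i}|u-z_i|^p$.

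The delicate case is $x\notin\bigcup_i A_i$ and $y\in A_l$. Here I write $z_l-u(y)$ directly, since $y\in A_l$, so that the same kernel-integral bound applies with the roles of the variables swapped. I must check that the cross terms between $i\ne l$ obey the same estimate, since the $A_i$ are disjoint. Finally, $w\in\Wsp(\domain)$ follows from these bounds together with $\|w\|_{L^p}\le\|u\|_{L^p}+\sum_i|z_i|\,|A_i|^{1/p}$.
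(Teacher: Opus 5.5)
Your argument is correct and is essentially the standard proof that the paper defers to (the proof of Lemma~3.8 in \cite{focardi-aperiodic-fractional}). The identity $w(x)-w(y)=(1-\Psi(x))(u(x)-u(y))+\sum_{i}(\varphi_i(x)-\varphi_i(y))(z_i-u(y))$, the pairwise disjointness of the $A_i$ (so at most two indices contribute at each $(x,y)$), and the kernel bound $\int_{\R^n}|\varphi(x)-\varphi(y)|^p|x-y|^{-n-sp}\,\dd{y}\lesssim_{n,s,p}\lip(\varphi)^{sp}$ use exactly the two properties the paper says are needed, namely separation and $\cup_i A_i\subset\domain$.

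Two small fixes are needed. First, the gap that controls $\lip(\varphi)$ is $\dist(A_i',\partial A_i)=m^{-2}\rho(1-m^{-1})$, not $m^{-1}\rho-m^{-2}\rho$; your final bound $\lip(\varphi)^{sp}\lesssim m^{2sp}\rho^{-sp}\le m^{2p}\rho^{-sp}$ is still right. Second, to control $|\Dsp(w,E)-\Dsp(u,E)|$ you should use the two-sided inequality $\bigl||a+b|^p-|a|^p\bigr|\le c(|a|^{p-1}|b|+|b|^p)$ rather than the one-sided one you state.
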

\begin{proof}
    The proof follows by arguing as in \cite[Proof of Lemma~3.8]{focardi-aperiodic-fractional} up to replacing the Delone set $\Lambda$ with $\{x_i\}_{i \in G}$ and $\mathcal{I}_\Lambda(\domain)$ with $G$. In fact in \cite[Proof of Lemma~3.8]{focardi-aperiodic-fractional} the property of  $\Lambda$ which is used is its discreteness (cf. \cite[Definition 2.2(i)]{focardi-aperiodic-fractional}) together with the fact that $\cup_{i \in \mathcal{I}_\Lambda(\domain)} A_i \subset \domain$ and these two properties are satisfied, by assumption, also in our case.  
\end{proof}

\begin{rmk}
Let $\omega \in \eventSimpleGround$ and let $\truncParam \in \N$. We notice that \eqref{eq: definition of good indices} and \eqref{eq: definition of very good indices} respectively imply that the sets $\goodEpsOmega$ and $\veryGoodEpsOmega$ (and correspondingly $\{x_i(\omega)\}_{i \in \goodEpsOmega}$ and $\{x_i(\omega)\}_{i \in \veryGoodEpsOmega}$) satisfy the assumptions of Lemma~\ref{lmm: unscaled joining lemma}. 
\end{rmk}

Let $G\subset \N$ and $\{x_i\}_{i \in G}$ be as in Lemma~\ref{lmm: unscaled joining lemma}.
For $i \in G$, $m \in \N$, and $h \in \N$ define (see Figure~\ref{fig: joining lemma}):
\begin{equation}
    \label{eq: definition of ball containing annulus}
    B_\eps^{i,h} = B_{m^{-3h} \eps \truncParam^{-1}}(\eps x_i), 
\end{equation}
\begin{equation}
    \label{eq: definition of annulus}
    C_\eps^{i,h} = B_{m^{-3h-1}\eps \truncParam^{-1}}(\eps x_i) \setminus \overline{B}_{m^{-3h-2}\eps \truncParam^{-1}}(\eps x_i),
\end{equation}
\begin{equation}
    \label{eq: definition of ball bounding annulus}
    \tilde{B}_\eps^{i,h} = B_{m^{-3h-1}\eps \truncParam^{-1}}(\eps x_i).
\end{equation}
\begin{figure}
    \begin{tikzpicture}[scale=0.85]
        \coordinate (A) at (0,0); 
        \coordinate (B) at (2,0);
        \coordinate (C) at (1,0.3);
        \coordinate (D) at (0,0.8);
        \coordinate (E) at (1.35,1);
        \coordinate (F) at (4,0);
        \draw[fill=MatplotlibC0!20] (A) circle (2);
        \draw[fill=white] (A) circle (1.3);
        \draw[fill=MatplotlibC0,] (A) circle (0.4); 
        \node at (A) {\color{white}\footnotesize$\obstacleEpsI$};
        \node at (E) {\footnotesize$C_{\eps}^{i,h}$};
    \end{tikzpicture}
    \caption{The annulus $C_{\eps}^{i,h}$ surrounding a good obstacle $\obstacleEpsI$.}
    \label{fig: joining lemma}
\end{figure}  
\begin{lmm} 
    \label{lmm: joining lemma}
    Fix $m,\joiningLemmaEnergyDiffParam,\truncParam \in \N$ and a sequence $\eps_j \downarrow 0$. Suppose that $u_j \to u$ in $L^p(\domain)$ and $\sup_{j \in \N} |u_j|_{\Wsp(\domain)} < \infty$. For every $j \in \N$, there exists $h_j \in \{1,\ldots,\joiningLemmaEnergyDiffParam\}$ and a function $w_j \in \Wsp(\domain)$ such that 
    \begin{equation*}
        w_j = u_j \quad \text{on } \domain \setminus \bigcup_{i \in G} ( \ballContainingAnnulusJI \setminus B_j^{i, h_j+1}), 
    \end{equation*}
    \begin{equation*}
        w_j = (u_j)_{\annulusJI} \quad \text{on } \annulusJI, 
    \end{equation*}
    and, for some $c = c(n,p,s,m,\truncParam)>0$, for every $\L^n$-measurable set $E \subset \domain \times \domain$, 
    \begin{equation}\label{e:en-estimate-joining-lemma}
        |\Dsp(u_j,E) - \Dsp(w_j,E)| \leq \frac{c}{\joiningLemmaEnergyDiffParam} |u_j|^p_{\Wsp(\domain)}.
    \end{equation}
    Moreover, $w_j \to u$ in $L^p(\domain)$ and, if $u_j \in L^\infty(\domain)$, then $\norm{w_j}_{L^\infty(\domain)} \leq \norm{u_j}_{L^\infty(\domain)}$.
\end{lmm}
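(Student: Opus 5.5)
The plan is the classical De Giorgi slicing and averaging argument, built on Lemma~\ref{lmm: unscaled joining lemma}. Everything is done after rescaling by $\eps_j$: the points are $\eps_j x_i$ for $i\in G$ (with $G=G_{j,\truncParam}$ or the very good set), and they are $2\eps_j/\truncParam$-separated with $B_{\eps_j/\truncParam}(\eps_j x_i)\subset\domain$.

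Fix $j$. For every $h\in\{1,\dots,\joiningLemmaEnergyDiffParam\}$ I apply Lemma~\ref{lmm: unscaled joining lemma} with the following data:
\begin{itemize}
\item radius $\rho_h:=m^{-3h}\eps_j\truncParam^{-1}$;
\item outer annulus $A_i^h=B_j^{i,h}\setminus\overline{B}_j^{i,h+1}$;
\item inner annulus $A_i'^h=C_j^{i,h}$;
\item constants $z_i=(u_j)_{C_j^{i,h}}$.
\end{itemize}
This gives a function $w_j^h$ with two properties. First, $w_j^h=u_j$ outside $\bigcup_i(B_j^{i,h}\setminus B_j^{i,h+1})$. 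Second, $w_j^h=(u_j)_{C_j^{i,h}}$ on $C_j^{i,h}$, because the cutoff equals one there. For every measurable $E$ it also gives
\[
|\Dsp(w_j^h,E)-\Dsp(u_j,E)|\le c\Big(\Dsp(u_j,\domain\times A^h)+m^{2p}\rho_h^{-sp}\sum_{i\in G}\int_{A_i^h}|u_j-(u_j)_{C_j^{i,h}}|^p\Big).
\]

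The key step is to bound the second term by a local Gagliardo energy. I use a scaled fractional Poincar\'e inequality on the annulus $A_i^h$, comparing the average over $C^{i,h}_j\subset A_i^h$ with the function on $A_i^h$. The annuli $A_i^h$ are dilates by $\rho_h$ of the fixed annulus $B_1\setminus\overline B_{m^{-3}}$, so scaling gives
\[
\rho_h^{-sp}\int_{A_i^h}|u_j-(u_j)_{C_j^{i,h}}|^p\le c(n,s,p,m)\,\Dsp(u_j,A_i^h\times A_i^h).
\]
Hence the error for index $h$ is at most $c\,\Dsp(u_j,\domain\times A^h)$, where $A^h=\bigcup_i A_i^h$.

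The sets $A^h$, $h=1,\dots,\joiningLemmaEnergyDiffParam$, are pairwise disjoint, since the radii decrease geometrically by $m^{-3}$. Therefore
\[
\sum_{h=1}^{\joiningLemmaEnergyDiffParam}\Dsp(u_j,\domain\times A^h)\le|u_j|^p_{\Wsp(\domain)}.
\]
I choose $h_j$ minimizing $\Dsp(u_j,\domain\times A^h)$. Its value is then at most $\joiningLemmaEnergyDiffParam^{-1}|u_j|^p_{\Wsp(\domain)}$, which gives \eqref{e:en-estimate-joining-lemma}, and I set $w_j:=w_j^{h_j}$.

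For the $L^\infty$ bound, $w_j$ is pointwise a convex combination of $u_j(x)$ and averages of $u_j$. Each average lies in $[-\|u_j\|_\infty,\|u_j\|_\infty]$, so $\|w_j\|_{L^\infty(\domain)}\le\|u_j\|_{L^\infty(\domain)}$.

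The step needing the most care is $w_j\to u$ in $L^p$, which I reduce to $\|w_j-u_j\|_{L^p}\to0$. We have $w_j-u_j=\sum_i\varphi_i(z_i-u_j)$, so
\[
\|w_j-u_j\|_p^p\le\sum_i\int_{A_i^{h_j}}|u_j-z_i|^p\le c\,\rho_{h_j}^{sp}\,|u_j|^p_{\Wsp(\domain)}.
\]
Since $\rho_{h_j}\le\eps_j$ and the energies are equibounded, the right-hand side is $O(\eps_j^{sp})\to0$. This uses the same Poincar\'e step again.

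The remaining point to verify is that the constant in the annular Poincar\'e inequality depends only on $n,s,p,m$. This holds because every annulus is a dilate of one fixed connected Lipschitz domain, and the fractional Poincar\'e inequality is scale-invariant with the factor $\rho^{sp}$.
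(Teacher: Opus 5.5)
Your proposal is correct and follows the same route as the paper, which proves the lemma by applying Lemma~\ref{lmm: unscaled joining lemma} for each $h\in\{1,\dots,L\}$ with $z_i=(u_j)_{C_j^{i,h}}$, bounding the error term by the scaled fractional Poincar\'e--Wirtinger inequality, and choosing $h_j$ by averaging over the $L$ pairwise disjoint families of annuli (the argument of \cite[Proof of Lemma~3.9]{focardi-aperiodic-fractional}). Two minor remarks: the condition $\rho_h<r_G/2$ in Lemma~\ref{lmm: unscaled joining lemma} needs $m\ge 2$; and the annular Poincar\'e inequality holds on any bounded set $A$ by Jensen's inequality with constant $\mathrm{diam}(A)^{n+sp}/\L^n(A)$, so connectedness or Lipschitz regularity of the annuli is not needed.
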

\begin{proof}
    The result follows from an application of Lemma~\ref{lmm: unscaled joining lemma} and Poincaré-Wirtinger Inequality in fractional Sobolev spaces, replicating the argument contained in \cite[Proof of Lemma~3.9]{focardi-aperiodic-fractional}.
\end{proof}

\subsection{Capacitary term as the limit of random sums}\label{sec: capacity term as limit of random sums}
The results proven in this subsection will allow us to reconstruct the capacitary term in the limit functional $\functional$ as the limits of some suitable \textit{random sums}. 

We start proving the following auxiliary lemma. 

\begin{lmm}
    \label{lmm: approximation of integral}
    \sureEventWithFollowingProperty For every $v \in C_c^0(\domain)$ and every $\omega \in \sampleSpace'$
    \begin{equation}\label{e: approximation of integral}
        \lim_{\truncParam \uparrow \infty}
         \lim_{\eps \downarrow 0} \eps^n \sum_{i \in \thinnedInsideDomainEpsOmega} |v(\eps x_i(\omega))|^p  = \IG \norm{v}_{L^p(\domain)}^p 
    \end{equation}
and 
    \begin{equation}\label{e: approximation of integral with radii}
        \lim_{\truncParam \uparrow \infty}
         \lim_{\eps \downarrow 0} \eps^n \sum_{i \in \thinnedInsideDomainEpsOmega} |v(\eps x_i(\omega))|^p \rho_i(\omega)^{n-sp} = \IG \norm{v}_{L^p(\domain)}^p \int_{\R_+} \rho^{n-sp} \, \dd{\MD}(\rho).
    \end{equation}
\end{lmm}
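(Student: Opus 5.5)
The plan is to read the sums in \eqref{e: approximation of integral} and \eqref{e: approximation of integral with radii} as integrals against the rescaled thinned measures $\PP^{2/\truncParam}_{g,\eps}$ and $\PP^{2/\truncParam}_\eps$, then apply the ergodic theorems in Lemma~\ref{lmm: ergodic thm pp measure convergence for any thinned process N^2/R}. The last step is to let $\truncParam\uparrow\infty$ using \eqref{eq: Palm limit delta}.

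Let $\sampleSpace'$ be the event of Lemma~\ref{lmm: ergodic thm pp measure convergence for any thinned process N^2/R}, intersected with the events $\sampleSpace_h$ for the countably many functions $h_{\truncParam}(\rho)=\rho^{n-sp}$ used with each thinned process. Since $h$ does not depend on $\truncParam$, and $\rho^{n-sp}\in L^1$ by \eqref{eq: integrability assumption on radii}, this is a countable intersection. Also check that $\rho^{n-sp}\in L^1(\R_+,\probLawPP_{(0,\cdot)}(Y_{2/\truncParam})\MD)$, which holds because the density is at most $1$; this makes Proposition~\ref{prop: ergodic theorem mpp convergence of measures} applicable to the thinned process, whose mark distribution is proportional to $\probLawPP_{(0,\cdot)}(Y_{2/\truncParam})\MD$. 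Fix $\omega\in\sampleSpace'$ and $v\in C_c^0(\domain)$, and set $f:=|v|^p\in C_c^0(\R^n)$, extended by zero.

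Since $\supp f\subset\domain$, we have
\[
\eps^n\sum_{i\in \thinnedInsideDomainEpsOmega}|v(\eps x_i)|^p=\int_{\R^n} f\,\dd{\PP^{2/\truncParam}_{g,\eps}},
\]
and similarly with the weight $\rho^{n-sp}$ against $\PP^{2/\truncParam}_\eps$. Here $\PP^{2/\truncParam}_{g,\eps}$ and $\PP^{2/\truncParam}_\eps$ are the rescalings \eqref{eq: rescaled measures} of the thinned process, and the indices outside $\eps^{-1}\domain$ contribute nothing. This identification is exact, so no boundary issue arises.

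By Lemma~\ref{lmm: ergodic thm pp measure convergence for any thinned process N^2/R} and \eqref{eq: expectation measure thinned process}, the $\eps\downarrow0$ limits are
\[
\IG\,\|v\|^p_{L^p(\domain)}\int_{\R_+}\probLawPP_{(0,\rho)}(Y_{2/\truncParam})\,\dd{\MD}(\rho)
\quad\text{and}\quad
\IG\,\|v\|^p_{L^p(\domain)}\int_{\R_+}\rho^{n-sp}\probLawPP_{(0,\rho)}(Y_{2/\truncParam})\,\dd{\MD}(\rho).
\]
Finally, as $\truncParam\uparrow\infty$ we have $\probLawPP_{(0,\rho)}(Y_{2/\truncParam})\to1$ for every $\rho$ by \eqref{eq: Palm limit delta}, with $0\le\probLawPP_{(0,\rho)}(Y_{2/\truncParam})\le1$. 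Dominated convergence, with dominating functions $1$ and $\rho^{n-sp}$ (both in $L^1(\MD)$), gives the two claims.

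The main point requiring care is that $\sampleSpace'$ must not depend on $v$. For this, use that Propositions~\ref{prop: ergodic theorem ground pp convergence of measures} and \ref{prop: ergodic theorem mpp convergence of measures} hold for every $f\in C_c^0$ on a single full-measure event, which is how they are stated. Then the only countable family to control is the one indexed by $\truncParam$.
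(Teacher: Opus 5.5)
Your proposal is correct and follows essentially the same route as the paper. You identify the sums as integrals against the rescaled thinned measures, apply Lemma~\ref{lmm: ergodic thm pp measure convergence for any thinned process N^2/R} together with the Palm representation \eqref{eq: expectation measure thinned process} of the thinned expectation measure, and then let $\truncParam\uparrow\infty$ by dominated convergence using \eqref{eq: Palm limit delta}. Your extra bookkeeping (intersecting with the events $\sampleSpace_h$ for $h(\rho)=\rho^{n-sp}$, and checking integrability against the thinned mark distribution) only makes explicit what the paper's proof leaves implicit.
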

\begin{proof}
    Let $\Omega' \in \sigmaAlgebra$ be given by Lemma~\ref{lmm: ergodic thm pp measure convergence for any thinned process N^2/R} and let $\omega \in \sampleSpace'$. In view of Lemma~\ref{lmm: ergodic thm pp measure convergence for any thinned process N^2/R}-\ref{enum: ergodic thm pp convergence of measures thinning}
     for every $\truncParam\in \mathbb N$ we have
    \begin{equation*}
       \lim_{\eps \downarrow0} \eps^n \sum_{i \in \thinnedInsideDomainEpsOmega} |v(\eps x_i(\omega))|^p = \IG \norm{v}_{L^p(\domain)}^p \int_{\R_+} \probLawPP_{(0,\rho)}(Y_\frac{2}{\truncParam}) \, \dd{\MD}(\rho), 
    \end{equation*}
    while Lemma~\ref{lmm: ergodic thm pp measure convergence for any thinned process N^2/R}-\ref{enum: ergodic thm mpp convergence of measures thinning} applied with $h(\rho) = \rho^{n-sp}$
    yields that for every $\truncParam\in \N$ 
    \begin{equation*}
        \lim_{\eps \downarrow 0} \eps^n \sum_{i \in \thinnedInsideDomainEpsOmega} |v(\eps x_i(\omega))|^p \rho_i(\omega)^{n-sp} = \IG \norm{v}_{L^p(\domain)}^p\int_{\R^+} \rho^{n-sp} \, \probLawPP_{(0,\rho)}(Y_\frac{2}{\truncParam}) \, \dd{\MD}(\rho).
    \end{equation*}
    Therefore, both \eqref{e: approximation of integral} and \eqref{e: approximation of integral with radii} follow by letting $\truncParam \to +\infty$ and invoking the Dominated Convergence Theorem.
\end{proof}

The following proposition will be crucial in the proof of the $\Gamma$-convergence result, Theorem~\ref{thm: main result}.
\begin{prop} 
    \label{lmm: integral approximation G_j,R}
    \sureEventWithFollowingProperty
    Let $\eps_j \downarrow 0$ and $u_j \to u$ in $L^p(\domain)$ as $j\to\infty$. Let $\truncParam, \joiningLemmaEnergyDiffParam, m \in \N$ and $h_j : \sampleSpace \to \{1, \ldots, \joiningLemmaEnergyDiffParam\}$. Then, 
    for every $\omega \in \sampleSpace'$ we have 
    \begin{equation}\label{e: integral approximation G_j,R}
        \lim_{\truncParam \to\infty}\sup_{L \in \mathbb N}
        \limsup_{j\to\infty}\Big|
        \eps_j^n \sum_{i \in \goodJOmega} |(u_j)_{\annulusJIomega}|^p -\IG \norm{u}_{L^p(\domain)}^p\Big|=0,
    \end{equation}
    and
    \begin{equation}\label{e: integral approximation G_j,R with radii}
        \lim_{\truncParam \to\infty}\sup_{L\in \mathbb N}
        \limsup_{j\to\infty}\Big|\eps_j^n \sum_{i \in \goodJOmega} |(u_j)_{\annulusJIomega}|^p \rho_i(\omega)^{n-sp} - \IG \norm{u}_{L^p(\domain)}^p \int_{\R_+} \rho^{n-sp} \, \dd{\MD}(\rho)\Big|=0.
    \end{equation}
    \end{prop}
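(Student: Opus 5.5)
The plan is to reduce \eqref{e: integral approximation G_j,R} and \eqref{e: integral approximation G_j,R with radii} to Lemma~\ref{lmm: approximation of integral} by replacing the annular averages $(u_j)_{\annulusJI}$ with point values $v(\eps_j x_i)$ of a fixed $v\in C_c^0(\domain)$. The error is then controlled uniformly in $L$ and in the random choice of $h_j$. Let $\sampleSpace'$ be the intersection of the events given by Lemmas~\ref{lmm: ergodic thm pp measure convergence for any thinned process N^2/R}--\ref{lmm: eps^n card(inside minus thinnedInside) -> 0} and \ref{lmm: approximation of integral}, and fix $\omega\in\sampleSpace'$. We first handle the weight $1$, and then the weight $\rho_i^{n-sp}$.

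\textbf{Step 1: freezing $u_j$ to $u$ and $u$ to $v$.} Since $|a|^p-|b|^p$ is bounded by $p|a-b|(|a|+|b|)^{p-1}$, it suffices to compare $\ell^p$-type sums. For $i\in\goodJ$ the balls $\ballContainingAnnulusJI\subset B_{\eps_j/\truncParam}(\eps_j x_i)$ are pairwise disjoint and contained in $\domain$. Moreover $\L^n(\annulusJI)\simeq_{n,m}m^{-3nh_j}(\eps_j/\truncParam)^n\ge c(m,L,\truncParam)\eps_j^n$, so Jensen's inequality gives
\[
\eps_j^n\sum_{i\in\goodJ}|(u_j-u)_{\annulusJI}|^p\le \eps_j^n\sum_{i}\fint_{\annulusJI}|u_j-u|^p\le c(m,L,\truncParam)\|u_j-u\|^p_{L^p(\domain)}\to0
\]
for each fixed $L$. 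This dependence on $L$ is harmless, because the $\limsup_j$ is taken before the $\sup_L$. Similarly, given $\eta>0$, we choose $v\in C_c^0(\domain)$ with $\|u-v\|_{L^p}<\eta$. The same bound shows that the contribution of $u-v$ is at most $c(m,L,\truncParam)\eta^p$. This is \emph{not} uniform in $L$, and it is the main obstacle.

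\textbf{Step 2: the main obstacle, uniformity in $L$.} To overcome it I would not compare $(u-v)$ on annuli directly. Instead I would use the $W^{s,p}$ bound: by Poincar\'e--Wirtinger on $B_{\eps_j/\truncParam}(\eps_j x_i)$ and a chain of dyadic averages from $\annulusJI$ to the whole ball $B_{\eps_j/\truncParam}(\eps_j x_i)$, one gets $|(u_j)_{\annulusJI}-(u_j)_{B_{\eps_j/\truncParam}(\eps_jx_i)}|^p\lesssim_{m,L}(\eps_j/\truncParam)^{sp-n}|u_j|^p_{W^{s,p}(B_{\eps_j/\truncParam}(\eps_jx_i))}$. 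After multiplying by $\eps_j^n$ and summing over disjoint balls, this is $\lesssim \eps_j^{sp}\sup_j|u_j|^p_{\Wsp}\to0$ for each fixed $L$ (if the sequence lacks an $\Wsp$ bound, I would first truncate and mollify). After this reduction only ball averages on $B_{\eps_j/\truncParam}(\eps_j x_i)$ remain, and these are independent of $h_j$ and $L$. For them, Jensen gives the constant $c(\truncParam)$ only, and the $v$-approximation error becomes $\lesssim \truncParam^n\eta^p$, uniformly in $L$. We let $\eta\to0$ before $\truncParam\to\infty$. Finally, $|(v)_{B_{\eps_j/\truncParam}(\eps_jx_i)}-v(\eps_jx_i)|\le\omega_v(\eps_j/\truncParam)\to0$ uniformly, and $\eps_j^n\#\goodJ$ is bounded by Proposition~\ref{prop: ergodic theorem ground pp convergence of measures}.

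\textbf{Step 3: passing from $\goodJ$ to $\thinnedInsideDomainJ$ and concluding.} We have
\[
\eps_j^n\sum_{i\in\thinnedInsideDomainJ\setminus\goodJ}|v(\eps_jx_i)|^p\le\|v\|_\infty^p\,\eps_j^n\#(\thinnedInsideDomainJ\setminus\goodJ),
\]
which vanishes as $j\to\infty$ and then $\truncParam\to\infty$ by Lemma~\ref{lmm: eps^n card(inside minus thinnedInside) -> 0}\ref{enum: card sets thinnedInsideDomainEps minus goodEps}. Then \eqref{e: approximation of integral} gives the limit $\IG\|v\|^p_{L^p}$, and letting $\eta\to0$ yields \eqref{e: integral approximation G_j,R}. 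For \eqref{e: integral approximation G_j,R with radii}, the weight satisfies $\rho_i\le\truncParam$ on $\goodJ$. Steps 1--2 therefore go through with an extra factor $\truncParam^{n-sp}$, which is killed for fixed $\truncParam$ because the errors vanish as $j\to\infty$. The only $j$-independent error is the $v$-approximation. There I would split $\rho_i^{n-sp}=\rho_i^{n-sp}\IF_{[0,M]}(\rho_i)+\rho_i^{n-sp}\IF_{(M,\infty)}(\rho_i)$, bound the second part using Lemma~\ref{lmm: limit of eps^n sum rho_i^n-sp 1_(R,infty)(rho_i)} together with $\|u_j\|_\infty$-type bounds (available after truncation), and bound the first part by $M^{n-sp}\truncParam^n\eta^p$. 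The index-set difference is controlled by Lemma~\ref{lmm: eps^n card(F_eps,R) -> 0 implies eps^n sum rho_i^n-sp -> 0}, and the limit is then given by \eqref{e: approximation of integral with radii}.
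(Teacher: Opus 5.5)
Your skeleton is the paper's. You use Jensen/Poincar\'e--Wirtinger to replace $(u_j)_{C_j^{i,h_j}}$ by averages of $u$, density to pass to a continuous $v$, and uniform continuity to reach $v(\varepsilon_j x_i)$. The cardinality estimates of Lemma~\ref{lmm: eps^n card(inside minus thinnedInside) -> 0}\ref{enum: card sets thinnedInsideDomainEps minus goodEps} (with Lemma~\ref{lmm: eps^n card(F_eps,R) -> 0 implies eps^n sum rho_i^n-sp -> 0} for the weight $\rho_i^{n-sp}$) take you from $G_{j,R}$ to $I_j^{2/R}$, and Lemma~\ref{lmm: approximation of integral} concludes.

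However, the step you call the main obstacle is not one. The parameter $\eta$ does not occur in the quantity $\limsup_j\bigl|\varepsilon_j^n\sum_{i\in G_{j,R}}|(u_j)_{C_j^{i,h_j}}|^p-m_g\|u\|_{L^p(U)}^p\bigr|$. So for fixed $R$ and $L$ you may let $\eta\to0$ \emph{after} $\limsup_j$. The factor $c(m,L,R)$ then disappears exactly as it did in the step $u_j\to u$, and $\sup_L$ only requires the error surviving this limit to be independent of $L$. (The paper's constants ``$\lesssim_{n,m,R}$'' silently contain a factor of order $m^{3nL}$ too, equally harmlessly.) Your Step~2 is therefore correct but superfluous. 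It also needs $\sup_j|u_j|_{W^{s,p}(U)}<\infty$, which is not among the hypotheses (the paper's proof uses it as well), and ``truncate and mollify'' would not supply it. Jensen directly on the annuli needs only $L^p$ convergence.

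The point that does need care is the surviving error, and there your orders of limits contradict each other. To kill $R^n\eta^p$ you send $\eta\to0$ at fixed $R$. To kill $\|v\|_{L^\infty}^p\,\varepsilon_j^n\#(I_j^{2/R}\setminus G_{j,R})$ you send $R\to\infty$ at fixed $v$, and only afterwards $\eta\to0$. In the consistent order ($j$, then $\eta$, then $R$) the surviving error contains $\limsup_{\eta\to0}\|v_\eta\|_{L^\infty}^p\cdot\limsup_j\varepsilon_j^n\#(I_j^{2/R}\setminus G_{j,R})$. This is in general $+\infty$ when $u\notin L^\infty(U)$.

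If $u\in L^\infty(U)$, the repair is to choose $v\in C_c(U)$ with $\|v\|_{L^\infty}\le\|u\|_{L^\infty}$ (truncate any approximant). This also disposes of your $M$-splitting in the weighted case, which is superfluous anyway since $\rho_i\le R$ on $G_{j,R}$. The ``$\|u_j\|_{L^\infty}$ after truncation'' it invokes would reintroduce the same conflict.

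For unbounded $u$, as in Proposition~\ref{prop: gamma-liminf}, you must bound the index-set difference through $\|v\|_{L^p}$ alone. For large $j$, the indices with $\mathrm{dist}(\varepsilon_j x_i,\partial U)\le\varepsilon_j/R$ do not see the compact support of $v$. The indices with $\rho_i>R$ are handled by Lemma~\ref{lmm: ergodic thm pp measure convergence for any thinned process N^2/R}\ref{enum: ergodic thm mpp convergence of measures thinning} with the extra mark weights $\mathds{1}_{(R,\infty)}(\rho)$ and $\rho^{n-sp}\mathds{1}_{(R,\infty)}(\rho)$, $R\in\N$, at the cost of countably many more null sets. This gives the bounds $m_g\|v\|_{L^p}^p\,\pi((R,\infty))$ and $m_g\|v\|_{L^p}^p\int_R^\infty\rho^{n-sp}\,\mathrm{d}\pi(\rho)$, uniform along $v\to u$. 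Alternatively, reduce the lower bound to bounded $u$ by truncating $u_j$, which lowers the energy and preserves the obstacle constraint.

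The paper's own proof is equally loose here. It takes $u\in C^\infty_c(U)$ ``without loss of generality'' and then uses $\|u\|_{L^\infty(U)}$ in precisely this step.
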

    
\begin{proof}
    \intersectionOfEventsGivenBy Lemmas \ref{lmm: eps^n card(F_eps,R) -> 0 implies eps^n sum rho_i^n-sp -> 0}, \ref{lmm: eps^n card(inside minus thinnedInside) -> 0}, and \ref{lmm: approximation of integral}, and $\omega \in \sampleSpace'$. To simplify the notation, in what follows we are going to omit the dependence on $\omega$. 
     
     We start proving the convergence in \eqref{e: integral approximation G_j,R}. 
     
     Recalling \eqref{eq: definition of ball containing annulus}-\eqref{eq: definition of ball bounding annulus} and appealing to Jensen's Inequality and Poincaré-Wirtinger Inequality \cite[(2.10) in Remark~2.7]{focardi-aperiodic-fractional} we get
    \begin{equation*}
        \begin{split}
            &\eps_j^n \sum_{i \in \goodJ} |(u_j)_{\annulusJI} - (u_j)_{\ballBoundingAnnulusJI}|^p  \leq \eps_j^n \sum_{i \in \goodJ} \fint_{\ballBoundingAnnulusJI} |u_j - (u_j)_{\annulusJI}|^p  \leq c \, \eps_j^{sp} \sup_{j \in \N} |u_j|^p_{\Wsp(\domain)}  , 
        \end{split}
    \end{equation*}
    for some constant $c = c(n,s,p,m)>0$.
    Similarly, we have
    \begin{equation*}
        \begin{split}
            \eps_j^n \sum_{i \in \goodJ} |(u_j)_{\ballBoundingAnnulusJI} - (u)_{\ballBoundingAnnulusJI}|^p  \leq \eps_j^n \sum_{i \in \goodJ} \fint_{\ballBoundingAnnulusJI} |u_j-u|^p \lesssim_{n,m, \truncParam} \norm{u_j-u}_{L^p(\domain)}^p
        \end{split}
    \end{equation*}    
    and therefore 
    \begin{equation*}
        \lim_{j\to \infty}  \eps_j^n \sum_{i \in \goodJ} |(u_j)_{\annulusJI} - (u)_{\ballBoundingAnnulusJI} |^p = 0.
    \end{equation*}
   Let $\{v_k\}_{k \in \N} \subset C^\infty_c(\domain)$ be such such that $v_k \to u$ in $L^p(\domain)$ as $k \to \infty$; arguing as above it is immediate to deduce that 
    \begin{equation*}
       \lim_{k \to \infty}  \eps_j^n \sum_{i \in \goodJ} |(u)_{\ballBoundingAnnulusJI} - (v_k)_{\ballBoundingAnnulusJI}|^p \lesssim_{n,m,\truncParam} \lim_{k \to \infty}  \norm{u-v_k}_{L^p(\domain)}^p = 0.
    \end{equation*}
    Hence, without loss of generality, we can assume that $u \in C^\infty_c(\domain)$. Let $\eta>0$ be arbitrary and fixed; since $u$ is uniformly continuous, for sufficiently large $j$ we have $|u(x)-u(\eps_j x_i)| < \eta$ for every $x \in \ballBoundingAnnulusJI$ and $i \in \goodJ$, Thus, we obtain 
    \begin{equation*}
        \begin{split}
            \limsup_{j\to \infty}\eps_j^n \sum_{i \in \goodJ} |(u)_{\ballBoundingAnnulusJI} - u(\eps_j x_i)|^p  \leq \eta^p \limsup_{j \to \infty} \eps_j^n \# \insideDomainJ =  \eta^p \IG \L^n(\domain)
        \end{split}
    \end{equation*}
    and the arbitrariness of $\eta>0$ yields  
    \begin{equation}\label{e:u-nel-centro}
        \lim_{j\to\infty }\eps_j^n \sum_{i \in \goodJ} |(u)_{\ballBoundingAnnulusJI} - u(\eps_j x_i)|^p = 0.
    \end{equation}
    Then, \eqref{e: integral approximation G_j,R} follows by \eqref{e: approximation of integral} in Lemma~\ref{lmm: approximation of integral} also observing that, by Lemma~\ref{lmm: eps^n card(inside minus thinnedInside) -> 0}-\ref{enum: card sets thinnedInsideDomainEps minus goodEps}, we have
    \begin{equation*}
        \lim_{\truncParam\to\infty}\limsup_{j \to \infty} \eps_j^n \sum_{i \in \insideDomainJ^{2/\truncParam} \setminus \goodJ} |u(\eps_j x_i)|^p \leq \norm{u}_{L^\infty(\domain)}^p \lim_{\truncParam\to\infty}\limsup_{j \to \infty}  \eps_j^n \#(\thinnedInsideDomainJ \setminus \goodJ) = 0\,.
    \end{equation*}
    We now turn to the proof of \eqref{e: integral approximation G_j,R with radii}.
    
    Since $\rho_i \leq \truncParam$ whenever $i \in G_{j, \truncParam}$, arguing as above readily gives 
    \begin{equation*}
        \lim_{j \to \infty} \eps_j^n \sum_{i \in G_{j, \truncParam}} |(u_j)_{\annulusJI} - (u)_{\ballBoundingAnnulusJI}|^p \rho_i^{n-sp} = 0.
    \end{equation*}
    Finally, by combining \eqref{e:u-nel-centro}, \eqref{e: approximation of integral} in Lemma~\ref{e: approximation of integral with radii}, Lemma~\ref{lmm: eps^n card(F_eps,R) -> 0 implies eps^n sum rho_i^n-sp -> 0}, and Lemma~\ref{lmm: eps^n card(inside minus thinnedInside) -> 0}-\ref{enum: card sets thinnedInsideDomainEps minus goodEps}
    we deduce
    \begin{equation*}
        \lim_{\truncParam \to \infty}\limsup_{j\to\infty} \eps_j^n \sum_{i \in \insideDomainJ^{2/\truncParam} \setminus \goodJ} |u(\eps_j x_i)|^p \rho_i^{n-sp} \leq \norm{u}_{L^\infty(\domain)}^p \lim_{\truncParam \to \infty}\limsup_{j \to \infty} \eps_j^n \sum_{i \in \thinnedInsideDomainJ \setminus \goodJ} \rho_i^{n-sp} = 0
    \end{equation*}
    and hence the claim. 
\end{proof}

\begin{rmk}\label{rmk:random_shapes}
For later use we observe that \eqref{e: integral approximation G_j,R with radii} holds also true if the sequence of random variables $\{\rho_i\}$ is replaced by $\{\gamma_i\}$ as in \ref{h: geometric assumption on obstacle of random shape} and \ref{h: stationarity of random capacities}. Namely, there exists $\Omega'\in \sigmaAlgebra$ with $\mathbb P(\Omega')=1$ such that for every $\omega \in \Omega'$ we have
\begin{equation}\label{e: integral approximation G_j,R with gamma_i}
        \lim_{\truncParam \to\infty}\sup_{L\in \mathbb N}
        \limsup_{j\to\infty}\bigg|\eps_j^n \sum_{i \in \goodJOmega} |(u_j)_{\annulusJIomega}|^p \gamma_i(\omega) - \norm{u}_{L^p(\domain)}^p \E\bigg[\sum_{x_i\in Q} \gamma_i \bigg]\bigg|=0\,.
    \end{equation}
\end{rmk}

\section{Proof of the main result}

This section is primarily devoted to the proof of the almost sure $\Gamma$-convergence result stated in Theorem~\ref{thm: main result}. The argument proceeds in two main steps, establishing separately the lower bound and the upper bound inequalities. At the end of this section we outline the proof of Theorem~\ref{thm: main result random shapes}, highlighting the points at which it differs from the proof of Theorem~\ref{thm: main result}.

\medskip

We start proving the lower-bound inequality.   


\begin{prop}\label{prop: gamma-liminf}
There exists $\sampleSpace' \in \sigmaAlgebra$ with $\mathbb{P}(\sampleSpace')= 1$ such that for every $\omega \in \sampleSpace'$, $\eps_j \downarrow 0$, $u\in L^p(U)$, and $u_j \to u$ in $L^p(\domain)$ we have 
\begin{equation}\label{e:lower-bound}
  \functional(u) \leq  \liminf_{j \to \infty} \functional_j^\omega (u_j).
\end{equation}
\end{prop}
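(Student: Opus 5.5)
We take $\sampleSpace'$ to be the intersection of the probability-one events given by Lemmas~\ref{lmm: eps^n card(F_eps,R) -> 0 implies eps^n sum rho_i^n-sp -> 0}, \ref{lmm: eps^n card(inside minus thinnedInside) -> 0}, \ref{lmm:lq norm} and Proposition~\ref{lmm: integral approximation G_j,R}, and we fix $\omega\in\sampleSpace'$.

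\emph{Preliminary reductions.} We may assume $\liminf_j\functional_j^\omega(u_j)<\infty$; after passing to a subsequence the liminf is a limit and $\sup_j|u_j|^p_{\Wsp(\domain)}<\infty$, so $u\in\Wsp(\domain)$. By a standard truncation argument (replace $u_j$ by $(u_j\wedge T)\vee(-T)$, which lowers the energy, keeps the pinning condition, and converges to the truncation of $u$), we may also assume $\sup_j\|u_j\|_{L^\infty}\le T$. At the end we let $T\to\infty$ using monotone convergence.

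\emph{Localization near the good obstacles.} Fix $m,\joiningLemmaEnergyDiffParam,\truncParam\in\N$ and apply the Joining Lemma~\ref{lmm: joining lemma} with $G=\goodJOmega$. This gives $h_j\in\{1,\dots,\joiningLemmaEnergyDiffParam\}$ and functions $w_j$ such that $w_j\to u$, $w_j$ is constant equal to $z_i^j:=(u_j)_{\annulusJI}$ on $\annulusJI$, $w_j=u_j$ on $\ballBoundingAnnulusJI\setminus\annulusJI$ (hence $\tilde w_j=0$ q.e.\ on $\obstacleJI$ for large $j$, since $\lambda_j\rho_i\le\lambda_j R\ll m^{-3L-2}\eps_j/R$), and the energy changes by at most $c/\joiningLemmaEnergyDiffParam$ on every set. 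We then split $\domain\times\domain$ into the set $D_j:=\bigcup_{i\in\goodJ}\ballBoundingAnnulusJI\times\ballBoundingAnnulusJI$ and its complement. On the complement, $|w_j|^p_{\Wsp}$ restricted there has liminf at least $|u|^p_{\Wsp(\domain)}$. This follows from lower semicontinuity (Fatou) applied to $\IF_{D_j^c}$: since $\L^{2n}(D_j)\to0$ (the balls are $\lesssim\eps_j^{-n}$ many with radius $\lesssim\eps_j$), $\IF_{D_j^c}\to1$ a.e. On each diagonal block, $v:=(z_i^j-w_j)/z_i^j$ (when $z_i^j\neq0$) is admissible up to rescaling for $\cspK(\obstacleJI,\ballBoundingAnnulusJI;r)$ with $r$ the inner radius of $\annulusJI$. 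Extending by the constant, this gives
\[
\Dsp(w_j,\ballBoundingAnnulusJI\times\ballBoundingAnnulusJI)\ge |z_i^j|^p\,\csp\bigl(B_{\lambda_j\rho_i},B_{R_2};r\bigr)
=|z_i^j|^p\eps_j^{n}\rho_i^{n-sp}\,\csp\bigl(B_1,B_{R_2/\lambda_j\rho_i};r/\lambda_j\rho_i\bigr).
\]
Because $\lambda_j\ll\eps_j$ and $\rho_i\le\truncParam$, both ratios $r/\lambda_j\rho_i$ and $R_2/r=m$ diverge uniformly, after taking $m\to\infty$. Using \eqref{e:cap2}–\eqref{e:cap3} of Lemma~\ref{l:loccap}, the last capacity is $\ge\csp(B_1)(1-o(1))$ uniformly in $i$. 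Summing over $i\in\goodJ$ and invoking \eqref{e: integral approximation G_j,R with radii} of Proposition~\ref{lmm: integral approximation G_j,R}, the diagonal contribution has liminf at least $\gamma\|u\|^p_{L^p}$ up to errors that vanish as $\truncParam\to\infty$, uniformly in $L$.

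\emph{Main obstacle.} The main difficulty is that $D_j$ and $D_j^c$ overlap with the rescaling of $\cspK(\cdot,B_R;r)$: the relative capacity must be computed on $\ballBoundingAnnulusJI$, while $w_j$ is constant only on the annulus. To handle this, we use the form of \eqref{e:cap2} with $R/r=m$, which gives an error of order $m^{-sp}$ that vanishes as $m\to\infty$. The other delicate point is that the diagonal blocks and the off-diagonal part must be added without double counting. This is handled by the decomposition $\domain\times\domain=D_j\sqcup D_j^c$ itself, since the sets are disjoint. Combining everything, $\liminf_j\functional_j(u_j)\ge\liminf_j|w_j|^p_{\Wsp}-c/\joiningLemmaEnergyDiffParam\ge|u|^p_{\Wsp}+\gamma\|u\|_p^p-o(1)$. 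We conclude by letting $j\to\infty$, then $\joiningLemmaEnergyDiffParam\to\infty$, $m\to\infty$, $\truncParam\to\infty$, and finally $T\to\infty$.
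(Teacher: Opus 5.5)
Your proposal is correct and is essentially the paper's proof: the Joining Lemma applied on the good obstacles, Fatou's Lemma away from the diagonal blocks, a lower bound on each block $\tilde B_j^{i,h_j}\times\tilde B_j^{i,h_j}$ by the relative capacity via \eqref{e:cap2}, and Proposition~\ref{lmm: integral approximation G_j,R} to recover $\gamma\|u\|^p_{L^p(\domain)}$. The differences are cosmetic: you split with $D_j\sqcup D_j^c$ instead of $\Delta_\delta$ with $\delta\to0$, your $L^\infty$ truncation is unnecessary, and rescaling each obstacle to $B_1$ makes the error from \eqref{e:cap2} multiplicative of order $(m-1)^{-sp}$, which avoids the paper's coupling $m_\truncParam=\lfloor \truncParam^{(n-sp+1)/sp}\rfloor$; note only that $w_j=u_j$ holds on $B_j^{i,h_j+1}$ rather than on all of $\tilde B_j^{i,h_j}\setminus C_j^{i,h_j}$, which still gives $\tilde w_j=0$ q.e.\ on $\obstacleJI$ for large $j$.
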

\begin{proof}
    Let $\Omega' \in \sigmaAlgebra$ be as in Proposition~\ref{lmm: integral approximation G_j,R} and $\omega \in \sampleSpace'$. 
    We assume that the right-hand side in \eqref{e:lower-bound} is finite otherwise the liminf-inequality is trivially satisfied. Up to possibly extracting a subsequence, we can additionally assume that the liminf in \eqref{e:lower-bound} is actually a limit. As a consequence we immediately deduce that $\sup_{j \in \N} |u_j|_{W^{s,p}(\domain)} < +\infty$ and $\tilde{u}_j = 0$ $\csp$-q.e. on $\setOfObstaclesJOmega \cap \domain$ for every $j \in \N$.
    
    Fix $\truncParam, \joiningLemmaEnergyDiffParam \in \N$, $\delta > 0$ and define $m_\truncParam := \lfloor \truncParam^\frac{n-sp+1}{sp} \rfloor$.
    Let $\{w_j^\omega\}_{j \in \N}$ be the sequence given by Lemma~\ref{lmm: joining lemma} applied with $G=\goodEpsOmega$ and
    $m$ replaced by $m_R$. In view of \eqref{e:en-estimate-joining-lemma} we can find a constant $c_{n,s,p,\truncParam}>0$ such that 
    \begin{equation}\label{e:lower-bound-modified}
        \left(1+\frac{c_{n,s,p,R}}{\joiningLemmaEnergyDiffParam}\right) \liminf_{j \to \infty} \functional_j^\omega(u_j) \geq \liminf_{j \to \infty} \functional_j^\omega(w_j^\omega).
    \end{equation}
    Up to a possible further subsequence, we may assume that the liminf in the right-hand side of \eqref{e:lower-bound-modified} is a limit and that $w_j^\omega \to u$ $\L^n$-a.e. in $\domain$.
    We observe that $\cup_{i \in \goodJOmega} (\ballContainingAnnulusJIomega \times \ballContainingAnnulusJIomega) \subset \diagonalDelta$, for sufficiently large $j$. Therefore, we have
    \begin{equation*}
        \functional_j^\omega(w_j^\omega) \geq \int_{(\domain \times \domain) \setminus \diagonalDelta} \frac{|w_j^\omega(x) - w_j^\omega(y)|^p}{|x-y|^{n+sp}} \, \dd{x} \, \dd{y} + \sum_{i \in \goodJOmega} |w_j^\omega|^p_{W^{s,p}(\ballContainingAnnulusJIomega)},
    \end{equation*}
    so that Fatou's Lemma entails
    \begin{equation}\label{e:energy-after-Fatou}
        \liminf_{j \to \infty} \functional_j^\omega(w_j^\omega) \geq  \locdefect{(\domain\times \domain) \setminus \diagonalDelta}{u}{x}{y}  + \liminf_{j \to \infty}\sum_{i \in \goodJOmega} |w_j^\omega|^p_{W^{s,p}(\ballContainingAnnulusJIomega)}.
     \end{equation}
     Recalling the definition of $\tilde{B}_{j}^{i,h_j}$ in \eqref{eq: definition of ball bounding annulus} we observe that 
     \begin{equation*}
        \begin{split}
             &\sum_{i \in \goodJOmega} |w_j^\omega|^p_{W^{s,p}(\ballContainingAnnulusJIomega)} \\
             &\geq  \sum_{i \in \goodJOmega} \inf\{\wspseminorm{w}{\tilde{B}_{j}^{i,h_j^\omega}}: w \in \wsp{\R^n}, \,  w = 0 \text{ on } \annulusJIomega, \, \tilde{w}=(u_j)_{\annulusJIomega}   \, \cspqeon \obstacleJIOmega \} \\
             &= \sum_{i \in \goodJOmega} |(u_j)_{\annulusJIomega}|^p \csp(\obstacleJIOmega, \ballBoundingAnnulusJIomega, m_\truncParam^{-3h_j^\omega -2} \eps_j \truncParam^{-1}) \\
             &= \sum_{i \in \goodJOmega} \lambda_j^{n-sp}|(u_j)_{\annulusJIomega}|^p \csp(B_{\rho_i(\omega)}, B_{m_\truncParam^{-3h_j^\omega-1}\eps_j \lambda_j^{-1}\truncParam^{-1}}, m_\truncParam^{-3h_j^\omega -2} \eps_j \lambda_j^{-1}\truncParam^{-1}). 
        \end{split}
     \end{equation*}
    Since $B_{\rho_i(\omega)} \subset B_\truncParam$ for every $i \in \goodJOmega$, appealing to 
    \eqref{e:cap2} in Lemma~\ref{l:loccap}
    we get
     \begin{multline*}
        \sup_{i \in \goodJOmega} (\csp(B_{\rho_i(\omega)}) - \csp(B_{\rho_i(\omega)}, B_{m_\truncParam^{-3h_j^\omega-1}\eps_j \lambda_j^{-1}\truncParam^{-1}},m_\truncParam^{-3h_j^\omega-2}\eps_j \lambda_j^{-1}\truncParam^{-1})) \\ \leq \frac{c_{n,s,p}}{(m_\truncParam-1)^{sp}} C_{s,p}(B_\truncParam,B_{m_\truncParam^{-3h_j^\omega -2} \eps_j \lambda_j^{-1}\truncParam^{-1}}), 
     \end{multline*}
     for some $c_{n,s,p}>0$; therefore 
     \begin{multline*}
        \liminf_{j \to \infty} \sum_{i \in \goodJOmega} |w_j^\omega|^p_{W^{s,p}(\ballContainingAnnulusJIomega)} \geq \csp(B_1) \liminf_{j \to \infty} \eps_j^n \sum_{i \in \goodJOmega} |(u_j)_{\annulusJIomega}|^p \rho_i(\omega)^{n-sp}\\ - \frac{c_{n,s,p}}{(m_\truncParam-1)^{sp}}\limsup_{j \to \infty} C_{s,p}(B_\truncParam,B_{m_\truncParam^{-3h_j^\omega -2} \eps_j \lambda_j^{-1}\truncParam^{-1}}) \eps_j^n \sum_{i \in \goodJOmega} |(u_j)_{\annulusJIomega}|^p . 
     \end{multline*}
     Thus, we find 
     \begin{equation*}
        \begin{split}
            &
            \lim_{\truncParam \to \infty}\lim_{\joiningLemmaEnergyDiffParam \to \infty}\liminf_{j \to \infty}
            \sum_{i \in \goodJOmega} |w_j^\omega|^p_{W^{s,p}(\ballContainingAnnulusJIomega)} \\
            &\geq \csp(B_1) \lim_{\truncParam \to \infty}\lim_{\joiningLemmaEnergyDiffParam \to \infty} \liminf_{j \to \infty} \eps_j^n \sum_{i \in \goodJOmega} |(u_j)_{\annulusJIomega}|^p \rho_i(\omega)^{n-sp}\\ 
            &- \lim_{\truncParam \to \infty}\frac{c_{n,s,p}}{(m_\truncParam-1)^{sp}}\lim_{\joiningLemmaEnergyDiffParam \to \infty} \limsup_{j \to \infty} C_{s,p}(B_\truncParam,B_{m_\truncParam^{-3h_j^\omega -2} \eps_j \lambda_j^{-1}\truncParam^{-1}}) \eps_j^n \sum_{i \in \goodJOmega} |(u_j)_{\annulusJIomega}|^p .         
        \end{split}
     \end{equation*}
     By 
     \eqref{e:cap1} in Lemma~\ref{l:loccap} and by the scaling property of the fractional capacity we have 
     \begin{equation*}
        \lim_{j \to \infty} C_{s,p}(B_\truncParam,B_{m_\truncParam^{-3h_j^\omega -2} \eps_j \lambda_j^{-1}\truncParam^{-1}}) = \csp(B_\truncParam) = \truncParam^{n-sp} \csp(B_1),
     \end{equation*}
     hence recalling the definition of $m_\truncParam$ we get
     \begin{equation*}
       \limTruncParamJoiningLemmaParamJtoInfty
       \frac{c_{n,s,p}}{(m_\truncParam-1)^{sp}} C_{s,p}(B_\truncParam,B_{m_\truncParam^{-3h_j^\omega -2} \eps_j \lambda_j^{-1}\truncParam^{-1}})  = 0.
     \end{equation*}
     Then, Proposition~\ref{lmm: integral approximation G_j,R} yields 
     \begin{equation*}
        \begin{split}
            \lim_{\truncParam \to \infty}\lim_{\joiningLemmaEnergyDiffParam \to \infty}\liminf_{j \to \infty}
            \sum_{i \in \goodJOmega} |w_j^\omega|^p_{W^{s,p}(\ballContainingAnnulusJIomega)} \geq \IG \csp(B_1) \norm{u}_{L^p(\domain)}^p \int_{\R_+} \rho^{n-sp} \, \dd{\MD}(\rho) = \capacityConstantErgodic \norm{u}_{L^p(\domain)}^p, 
        \end{split}
     \end{equation*}
     thus 
     \begin{equation}\label{e:lower-bound-capacity}
        \begin{split}
            \liminf_{j \to \infty} \functional_j^\omega(u_j) &= 
            \lim_{\truncParam \to \infty}\lim_{\joiningLemmaEnergyDiffParam \to \infty}\liminf_{j \to \infty}
            \left(1+\frac{c_{n,s,p,R}}{\joiningLemmaEnergyDiffParam}\right) \functional_j^\omega(u_j) \\ 
            &\geq \locdefect{(\domain\times \domain) \setminus \diagonalDelta}{u}{x}{y} + \capacityConstantErgodic \norm{u}_{L^p(\domain)}^p.
        \end{split}
     \end{equation}
     Finally, the conclusion follows by gathering \eqref{e:lower-bound-modified}-\eqref{e:lower-bound-capacity} and by passing to limit as $\delta \to 0$ thanks to the Dominated Convergence Theorem. 
\end{proof}
In the next proposition we prove the so-called limsup-inequality. 

\begin{prop}
    \label{prop: gamma-limsup}
There exists $\sampleSpace' \in \sigmaAlgebra$ with $\mathbb{P}(\sampleSpace')= 1$ such that for every $\omega \in \sampleSpace'$, $u \in L^p(\domain)$, and $\eps_j \downarrow 0$, there exists $u_j \to u$ in $L^p(\domain)$ such that 
\begin{equation}\label{e:upper-bound}
    \limsup_{j \to \infty} \functional_j^\omega (u_j) \leq \functional(u).
\end{equation}
\end{prop}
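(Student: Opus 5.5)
Fix $\omega$ in the intersection of the full-measure events given by Lemmas~\ref{lmm: eps^n card(F_eps,R) -> 0 implies eps^n sum rho_i^n-sp -> 0}--\ref{lmm: NVG have vanishing relative capacity} and Proposition~\ref{lmm: integral approximation G_j,R}; we omit $\omega$ below. We first reduce to $u\in C_c^\infty(\domain)$. Such functions are dense in $\Wsp(\domain)$ for Lipschitz $\domain$ with respect to the energy $\functional$, since the $\gamma\|u\|_p^p$ term is continuous. Hence, by lower semicontinuity of the $\Gamma$-limsup and a diagonal argument, it suffices to produce recovery sequences for smooth compactly supported $u$, which are in particular bounded and Lipschitz.

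Fix $\truncParam\in\N$ and $r>\truncParam$ large, and put $r_j:=r\lambda_j$. For every very good index $i\in\veryGoodJ$ let $\xi_r^{i}$ be an almost minimizer for $C_{s,p}(B_{\rho_i},B_r)$ in the sense of \eqref{e:almost-min}, rescaled to $\xi^i_j(x):=\xi_r^{i}((x-\eps_j x_i)/\lambda_j)$, which is supported in $\overline B_{r_j}(\eps_j x_i)$. Since $r_j\ll\eps_j/\truncParam$, these supports lie inside the disjoint balls $\ballContainingObstacleEpsI$. Truncating $0\le\xi\le1$, these balls also avoid the safety layer of the not very good obstacles by \eqref{eq: very good enlarged obstacles and not very good obstacles with double radius are disjoint}. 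For the not very good obstacles, Lemma~\ref{lmm: NVG have vanishing relative capacity} provides $\zeta_j\in\Wsp(\R^n)$, $0\le\zeta_j\le1$, with $\tilde\zeta_j=1$ q.e.\ on $\bigcup_{i\in\notVeryGoodJ}B_{\lambda_j\rho_i}(\eps_jx_i)$, supported in $\bigcup_{i\in\notVeryGoodJ}B_{2\lambda_j\rho_i}(\eps_jx_i)$, and with $\limsup_j|\zeta_j|^p_{\Wsp(\R^n)}\le\eta_\truncParam\to0$. We then set
\[
u_j:=u\,\Big(1-\sum_{i\in\veryGoodJ}\xi^i_j\Big)(1-\zeta_j).
\]
This function vanishes q.e.\ on $\setOfObstaclesJ\cap\domain$. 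Moreover $u_j\to u$ in $L^p$, because the measure of the supports vanishes by Lemma~\ref{lmm:lq norm} with $q=n$ and $\|u_j\|_\infty\le\|u\|_\infty$.

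For the energy estimate, we split $\domain\times\domain$ into $\diagonalDelta$ and its complement. The term involving $\zeta_j$ is handled by convexity (the splitting $|a+b|^p\le(1+\theta)^{p-1}|a|^p+(1+\theta^{-1})^{p-1}|b|^p$) together with the bound $|u\zeta_j(x)-u\zeta_j(y)|\lesssim\|u\|_\infty|\zeta_j(x)-\zeta_j(y)|+\lip(u)|x-y|\zeta_j(y)$. Its contribution is $O(\eta_\truncParam)$ plus a term controlled by the $L^p$ norm of $\zeta_j$, which is $o(1)$. For the very good part, we write $u\sum\xi^i_j=\sum(u(\eps_jx_i)+O(r_j))\xi^i_j$ and expand the energy. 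The self-interaction of each bump yields $|u(\eps_jx_i)|^p\lambda_j^{n-sp}(C_{s,p}(B_{\rho_i},B_r)+1/r)$. Cross terms between distinct bumps and bump--$u$ interactions at distance larger than $\eps_j/\truncParam$ are estimated through \eqref{eq: locality defect on B(r) x B(r)^c}, rescaled, and \eqref{e:stima Adams}. Summing over $i\in\veryGoodJ$ and using $\sum_i\lambda_j^n\rho_i^{n}\to0$, their total vanishes as $j\to\infty$ and then $r\to\infty$. Using \eqref{e:cap1} uniformly for $\rho_i\le\truncParam$, then Lemma~\ref{lmm: approximation of integral} (specifically \eqref{e: approximation of integral with radii}) with $v=u$, and Lemma~\ref{lmm: eps^n card(inside minus thinnedInside) -> 0} to pass from $\thinnedInsideDomainJ$ to $\veryGoodJ$, we obtain
\[
\limsup_{r}\limsup_j \functional_j(u_j)\le |u|^p_{\Wsp(\domain)}+\csp(B_1)\IG\|u\|_p^p\int_0^\truncParam\rho^{n-sp}\,\dd\MD+C\eta_\truncParam.
\]
Letting $\truncParam\to\infty$ and then diagonalizing in $(r,\truncParam)$ completes the argument.

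The main obstacle is controlling the nonlocal interactions: between the bumps and the unperturbed function $u$ away from the obstacles, and between bumps around different obstacles. Since the number of bumps is about $\eps_j^{-n}$, these must be summed carefully. The plan is to bound each pairwise interaction by the locality defect $\DspK(\xi^T_r,B_{R(r)}\times B_{R(r)}^c)$ from Lemma~\ref{l:loccap}, choosing $R(r)$ with $\lambda_jR(r)\le\eps_j/\truncParam$. The remaining long-range sums are controlled by \eqref{e:stima Adams} combined with $\|\xi^i_j\|_{L^p}^p\lesssim\lambda_j^{n}r^{sp}C_{s,p}$, via the Poincaré inequality on $B_{r_j}$, which gives a total of order $\eps_j^{n}\#\insideDomainJ\cdot (r\lambda_j\truncParam/\eps_j)^{sp}\to0$.
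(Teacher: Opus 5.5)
Your construction $u(1-\sum_{i\in\veryGoodJ}\xi^i_j)(1-\zeta_j)$ is a genuinely different route from the paper's and can be made to work. However, your reduction step is false as stated, and your energy estimate needs to be made precise.

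\textbf{The density claim.} $C_c^\infty(\domain)$ is \emph{not} dense in $\Wsp(\domain)$ with respect to $\functional$ when $sp>1$, and this case is allowed since only $sp<n$ is assumed. The regional functional carries no boundary condition, whereas the closure of $C_c^\infty(\domain)$ in $\Wsp(\domain)$ consists of functions with zero trace on $\partial\domain$. For instance, $u\equiv1$ cannot be approximated in $L^p$ by functions in $C_c^\infty(\domain)$ with converging Gagliardo seminorms: such a sequence would converge weakly in $\Wsp(\domain)$, and the trace is weakly continuous.

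The repair is to take $u$ bounded and Lipschitz on $\domain$, as the paper does with $W^{1,\infty}(\domain)\cap L^p(\domain)$. Your construction never uses compact support. The balls $B_{\eps_j/\truncParam}(\eps_jx_i)$, $i\in\veryGoodJ$, lie in $\domain$ by \eqref{eq: definition of good indices}, and only $\|u\|_\infty$ and $\lip(u)$ enter. The one place to adjust is Lemma~\ref{lmm: approximation of integral}, which is stated for $v\in C_c^0(\domain)$. For the needed upper bound, extend $u$ continuously, dominate $\IF_\domain$ by a continuous cutoff, and use Proposition~\ref{prop: ergodic theorem mpp convergence of measures} with $h(\rho)=\rho^{n-sp}$, adding its full-measure event to your $\sampleSpace'$.

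\textbf{The energy splitting.} For $p\neq2$ you cannot ``expand the energy'' of $u-u\Xi$, where $\Xi:=\sum_{i\in\veryGoodJ}\xi^i_j$. Any global convexity splitting puts $(1+\theta^{-1})^{p-1}$ in front of either the capacitary term or $|u|^p_{\Wsp(\domain)}$. The splitting must depend on the region:
\begin{enumerate}
\item For pairs in the same ball, i.e.\ in $B_{\eps_j/\truncParam}(\eps_jx_i)^2$, give $(1+\theta)^{p-1}$ to $|u(y)|^p|\xi^i_j(x)-\xi^i_j(y)|^p$. There the $u$-increments contribute in total only $O(\lip(u)^p(\eps_j/\truncParam)^{(1-s)p})$.
\item For all other pairs, give $(1+\theta)^{p-1}$ to $|u(x)-u(y)|^p$. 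The bumps then contribute only tails of order $(\truncParam r\lambda_j/\eps_j)^{sp}$, as in your final estimate.
\end{enumerate}
You also need $\Xi\,\zeta_j=0$, which follows from \eqref{eq: very good enlarged obstacles and not very good obstacles with double radius are disjoint} and gives $u_j=u-u\Xi-u\zeta_j$. Take limits in the order $j\to\infty$, $r\to\infty$, $\truncParam\to\infty$, $\theta\to0$. With these, your argument closes.

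\textbf{Comparison with the paper.} The paper never multiplies $u$ by the profiles. It applies the Joining Lemma~\ref{lmm: joining lemma} to $u_j\equiv u$ with $m=2$, so that the function equals the constant $(u)_{\annulusJI}$ on an annulus around each very good obstacle. It then pastes in $(1-\xi_\joiningLemmaEnergyDiffParam(\frac{\cdot-\eps_jx_i}{\lambda_j\rho_i}))(u)_{\annulusJI}$, using a single profile for $B_1$.

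The paper splits the energy into the complement of $\diagonalDelta$, handled by dominated convergence, and five families of interactions in $\diagonalDelta$. The $u$-energy near the diagonal is absorbed into $\Dsp(u,\domain^2\cap\diagonalDelta)\to0$ as $\delta\to0$, so no $\theta$-splitting is needed. The price is an extra counting argument for $|(u)_{\annulusJI}-(u)_{\annulusJK}|^p$ between distinct very good balls, and the benefit is reuse of the liminf machinery.

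Your route is more elementary. It replaces the Joining Lemma and the counting with the Lipschitz continuity of $u$ at scale $\eps_j/\truncParam$ and the crude tail bound coming from $\lambda_j\ll\eps_j$. Your per-obstacle profiles for $B_{\rho_i}$ also adapt directly to the random shapes of Theorem~\ref{thm: main result random shapes}.
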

\begin{proof}
    \intersectionOfEventsGivenBy Lemmas \ref{lmm: limit of eps^n sum rho_i^n-sp 1_(R,infty)(rho_i)}, \ref{lmm:lq norm}, \ref{lmm: NVG have vanishing relative capacity}, and Proposition~\ref{lmm: integral approximation G_j,R} and let $\omega \in \sampleSpace'$.
    To simplify the notation, in the rest of the proof we are going to omit the explicit dependence on $\omega$ of all the relevant quantities.

  We observe that by a standard continuity and density argument it suffices to prove \eqref{e:upper-bound} for target functions $u \in W^{1,\infty}(U) \cap L^p(U)$ (cf.\ \cite[Proposition~1.28 and Remark~1.29]{braides-beginners}).

    Let $\joiningLemmaEnergyDiffParam, \truncParam \in \N$ be fixed and let $\{w_j\}_{j \in \N}$ be given by Lemma~\ref{lmm: joining lemma} applied with $G=\veryGoodJ$, $u_j \equiv u$,  and $m = 2$.
    Let then $\xi_\joiningLemmaEnergyDiffParam \in W^{s,p}(\Rn; [0,1])$ be such that $\xi_\joiningLemmaEnergyDiffParam = 0$ on $\R^n \setminus B_\joiningLemmaEnergyDiffParam$,  $\tilde{\xi}_\joiningLemmaEnergyDiffParam = 1$ $\cspqe$ on $B_1$, and 
    \begin{equation}\label{e:xiL}
|\xi_\joiningLemmaEnergyDiffParam|^p_{\Wsp(\Rn)} \leq \CSP(B_1, B_\joiningLemmaEnergyDiffParam) + \frac{1}{\joiningLemmaEnergyDiffParam}.
    \end{equation}
    Recalling that $\ballBoundingAnnulusJI=B_{2^{-3h_j-1}\eps_jR^{-1}}(\eps_jx_i)$  (cf. \eqref{eq: definition of ball bounding annulus}), we define 
    \begin{equation*}
        v_j(x):= \begin{cases}
            w_j(x) \quad &\text{if } x \in \domain\setminus \bigcup\limits_{i \in \veryGoodJ} \ballBoundingAnnulusJI, \\
            \left(1-\xi_\joiningLemmaEnergyDiffParam\left(\frac{x-\eps_j x_i}{\lambda_j \rho_i}\right)\right) (u)_{\annulusJI} \quad &\text{if } x \in \bigcup\limits_{i \in \veryGoodJ} \ballBoundingAnnulusJI.
        \end{cases}
    \end{equation*}
    For $i \in \veryGoodJ$ we have $\rho_i \leq \truncParam$, hence $\joiningLemmaEnergyDiffParam \lambda_j \rho_i < 2^{-3h_j-1} \eps_j \truncParam^{-1}$, for sufficiently large $j \in \N$. In particular, this implies that $\obstacleJI \subset \ballBoundingAnnulusJI$ for every $i \in \veryGoodJ$, thus $v_j = 0$ on $\cup_{i \in \veryGoodJ} \obstacleJI$.
    Moreover, since $w_j = (u)_{\annulusJI}$ on $\partial \ballBoundingAnnulusJI$, we immediately deduce that $v_j \in W^{s,p}(\domain)$.

    Now, let $\phi_{j,\truncParam} \in W^{s,p}(\Rn; [0,1])$ be such that 
    \begin{align}\label{e:phijR}
        &\tilde{\phi}_{j,\truncParam} = 0 \quad \cspqeon \bigcup_{i \in \notVeryGoodJ} \ballCriticalRadiusJI, \notag\\
        &\tilde{\phi}_{j,\truncParam} = 1 \quad \cspqeon \Rn \setminus \bigcup_{i \in \notVeryGoodJ} \ballDoubleCriticalRadiusJI, \notag \\
        &|\phi_{j,\truncParam}|^p_{W^{s,p}(\Rn)} \leq 2 \CSP\left(\bigcup_{i \in \notVeryGoodJ} \ballCriticalRadiusJI, \bigcup_{i \in \notVeryGoodJ} \ballDoubleCriticalRadiusJI\right).
    \end{align}

    We define $u_j := v_j \phi_{j,\truncParam}$, then, by construction, $u_j = 0$ on $\setOfObstaclesJ$ and $u_j \in W^{s,p}(U)$. 

    We recall property \eqref{eq: very good enlarged obstacles and not very good obstacles with double radius are disjoint} and define 
        \begin{equation}\label{e:Uj}
            \domain_j:= \domain \setminus ( \tilde{B}_{\veryGoodJ}  \cup  D_{\notVeryGoodJ})
        \end{equation}
        with
        \begin{equation}\label{e:BVG}
            \tilde{B}_{\veryGoodJ} := \bigcup_{i \in \veryGoodJ} \ballBoundingAnnulusJI,    
        \end{equation}
        and
        \begin{equation}\label{e:DNVG}
            D_{\notVeryGoodJ}:=
            \bigcup_{i \in \notVeryGoodJ} \ballDoubleCriticalRadiusJI
            \cap \domain\,.
        \end{equation}
        We observe that for every $\joiningLemmaEnergyDiffParam\in\mathbb N$ we have
        \begin{equation}\label{e: gamma-limsup, u_j converges to u in L^p}
            \lim_{j\to\infty} \sup_{\truncParam\in \N}\norm{u_j-u}_{L^p(\domain)} = 0.
        \end{equation}
        Indeed we have
        \begin{equation}
            \label{eq: gamma-limsup, int_U |u_j-u|^p}
            \int_\domain |u_j - u|^p \leq \int_{\domain_j} |w_j- u|^p + \sum_{i \in \veryGoodJ} \int_{\ballBoundingAnnulusJI} |u_j -u|^p + \sum_{i \in \notVeryGoodJ} \int_{\ballDoubleCriticalRadiusJI} |u_j -u|^p. 
        \end{equation}
        The first term in \eqref{eq: gamma-limsup, int_U |u_j-u|^p} approaches zero as $j\to\infty$ since $w_j \to u$ in $L^p(\domain)$. On the other hand, using that $u_j=v_j= \left(1-\xi_\joiningLemmaEnergyDiffParam\left(\frac{x-\eps_j x_i}{\lambda_j \rho_i}\right)\right) (u)_{\annulusJI}$ on $\bigcup\limits_{i \in \veryGoodJ}\ballBoundingAnnulusJI$ together with the very definition of $\xi_\joiningLemmaEnergyDiffParam$,
        the second term in \eqref{eq: gamma-limsup, int_U |u_j-u|^p} can be estimated as follows: 
        \begin{equation*}
            \begin{split}
                &\sum_{i \in \veryGoodJ} \int_{\ballBoundingAnnulusJI} |u_j - u|^p \\
                &=\sum_{i \in \veryGoodJ} \int_{\obstacle_j^i} |u|^p +  \sum_{i \in \veryGoodJ} \int_{B_{\joiningLemmaEnergyDiffParam \lambda_j \rho_i}(\eps_j x_i) \setminus \obstacle_j^i} |u_j -u|^p + \sum_{i \in \veryGoodJ} \int_{\ballBoundingAnnulusJI \setminus B_{\joiningLemmaEnergyDiffParam \lambda_j \rho_i}(\eps_j x_i)} |u - (u)_{\annulusJI}|^p \\
                &\lesssim_{n,\joiningLemmaEnergyDiffParam} \norm{u}_{L^\infty(\domain)}^p \sum_{i \in \veryGoodJ} (\lambda_j \rho_i)^n +  \sum_{i \in \veryGoodJ}\int_{\ballBoundingAnnulusJI} |u - (u)_{\annulusJI}|^p
            \end{split}
        \end{equation*}
        Furthermore, by the Poincaré-Wirtinger Inequality for fractional Sobolev spaces (see e.g. \cite[Theorem~2.6 and Remark~2.7]{focardi-aperiodic-fractional}) and since $\{\ballBoundingAnnulusJI\}_{i \in \veryGoodJ}$ are pairwise disjoint, we get
        \begin{equation*}
            \begin{split}
                \sum_{i \in \veryGoodJ}\int_{\ballBoundingAnnulusJI} |u - (u)_{\annulusJI}|^p \lesssim_{n,s,p} (2^{-3h_j-1}\eps_j \truncParam^{-1})^{sp} \sum_{i \in \veryGoodJ}|u|^p_{\Wsp(\ballBoundingAnnulusJI)} \leq \eps_j^{sp} |u|^p_{\Wsp(\domain)},
            \end{split}
        \end{equation*}
        which implies, by Lemma~\ref{lmm:lq norm}, 
        that the second term in \eqref{eq: gamma-limsup, int_U |u_j-u|^p} vanishes as $j\to \infty$.
        Regarding the third term in \eqref{eq: gamma-limsup, int_U |u_j-u|^p}, we have 
        \begin{equation*}
            \begin{split}
                \sum_{i \in \notVeryGoodJ} \int_{\ballDoubleCriticalRadiusJI} |u_j -u|^p \lesssim_{n,p} \norm{u}_{L^\infty(\domain)}^p 
                \sum_{i \in \notVeryGoodJ} (\lambda_j \rho_i)^n
                \lesssim_{n,p}\norm{u}_{L^\infty(\domain)}^p \sum_{i \in \insideDomainJ} (\lambda_j \rho_i)^n\,, 
            \end{split}
        \end{equation*}
        hence the conclusion follows at once by Lemma~\ref{lmm:lq norm} with $q=n$.
    
    The rest of the proof is devoted to show that 
        \begin{equation}\label{e:approx-upper-bound}
            \limsup_{j\to\infty}\functional_j(u_j) \leq \functional(u)+\errordRL, 
        \end{equation}
        where $\errordRL=\errordRL(\delta, R,L)>0$ is such that 
        \[
        \displaystyle{\lim_{\delta\to 0}\lim_{\truncParam \to \infty}\lim_{\joiningLemmaEnergyDiffParam\to\infty}\errordRL=0}.
        \]
    Since the computations leading to \eqref{e:approx-upper-bound} are rather involved, we perform them in a number of consecutive steps. 
    
    \begin{claim} In this first step we preliminary reformulate \eqref{e:approx-upper-bound}.
    \end{claim}
        By \eqref{e: gamma-limsup, u_j converges to u in L^p} and the Lebesgue Dominated Convergence Theorem, for every $\truncParam,\,\joiningLemmaEnergyDiffParam \in \mathbb N$, and $\delta>0$, we have
        \begin{equation*}
           \lim_{j\to\infty}\Dsp(u_j, (\domain \times \domain) \setminus \diagonalDelta) = \Dsp(u,(\domain \times \domain)\setminus \diagonalDelta).
        \end{equation*}
        Therefore, we are only left to estimate the energy of $u_j$ in a neighborhood of the diagonal $\Delta$. To this end, since $u_j = w_j$ on $\domain_j$, by Lemma~\ref{lmm: joining lemma} we deduce that
        \begin{equation}
            \label{eq: gamma-limsup, locality defect of u_j and w_j close to the diagonal}
            \begin{split}
                &\Dsp(u_j, (\domain_j \times \domain_j) \cap \diagonalDelta) = \Dsp(w_j, (\domain_j \times \domain_j) \cap \diagonalDelta) \\
                &\leq \Dsp(w_j, (\domain \times \domain) \cap \diagonalDelta) \leq \frac{c_{n,s,p,\truncParam}}{\joiningLemmaEnergyDiffParam} |u|_{\Wsp(\domain)}^p + \Dsp(u,(\domain \times \domain) \cap \diagonalDelta),
            \end{split}
        \end{equation}
        for some $c_{n,s,p,\truncParam} > 0$.
        Hence, by the Dominated Convergence Theorem  
        \begin{equation}\label{e:claim522 1}
            \limsup_{j\to\infty}
            \Dsp(u_j, (\domain_j \times \domain_j) \cap \diagonalDelta)\leq\errordRL\,.
        \end{equation}
        Therefore, in view of \eqref{e:claim522 1}, the inequality in \eqref{e:approx-upper-bound} reduces to proving that 
        \begin{equation}\label{e:claim522 2}
            \limsup_{j\to\infty}(
            \Dsp(u_j, (\domain \times (\domain \setminus \domain_j)) \cap \diagonalDelta) + \Dsp(u_j, ((\domain\setminus\domain_j)\times \domain_j) \cap \diagonalDelta)) 
            \leq \functional(u) +\errordRL.
        \end{equation}
        To establish \eqref{e:claim522 2} we use the definition of $\domain_j$ to get
        \begin{equation*}
            \domain = \tilde{B}_{\veryGoodJ} \cup D_{\notVeryGoodJ} \cup \domain_j,
        \end{equation*}
        where the sets in the above union are pairwise disjoint and defined in \eqref{e:Uj}, \eqref{e:BVG}, and \eqref{e:DNVG}, respectively.
        In particular, we have
        \begin{align*}
            &\domain \times (\domain \setminus \domain_j) = (\domain \setminus \domain_j)^2 \cup (\domain_j \times (\domain \setminus \domain_j)), \\
            &(\domain \setminus \domain_j)^2 = (\tilde{B}_{\veryGoodJ} \cup D_{\notVeryGoodJ})^2, \\
            &(\domain \setminus \domain_j) \times \domain_j = (\tilde{B}_{\veryGoodJ} \cup D_{\notVeryGoodJ}) \times \domain_j.
        \end{align*}
        Since $\Dsp(\cdot, A\times B) = \Dsp(\cdot, B\times A)$, proving \eqref{e:claim522 2} amounts to showing that 
        \begin{multline}
            \label{eq: gamma-limsup, locality defect inequality to prove}
            \limsup_{j\to\infty}\Big( \Dsp(u_j, \tilde{B}_{\veryGoodJ}^2 \cap \diagonalDelta) +2 \Dsp(u_j, (\tilde{B}_{\veryGoodJ} \times D_{\notVeryGoodJ})\cap \diagonalDelta)  \\ + \Dsp(u_j, D_{\notVeryGoodJ}^2 \cap \diagonalDelta) + \Dsp(u_j, (\tilde{B}_{\veryGoodJ} \times \domain_j) \cap \diagonalDelta)  \\+ \Dsp(u_j, (D_{\notVeryGoodJ} \times \domain_j) \cap \diagonalDelta) \Big) \leq \functional(u)+\errordRL. 
        \end{multline}
        In the next steps we will prove \eqref{eq: gamma-limsup, locality defect inequality to prove} by separately treating the different terms in its left-hand side.
        \begin{claim} We show that
            \begin{equation}\label{claim:main contribution}
                  \limsup_{j\to\infty}\Dsp(u_j, \tilde{B}_{\veryGoodJ}^2 \cap \diagonalDelta) \leq \capacityConstantErgodic\norm{u}_{L^p(\domain)}^p
                  +\errordRL\,.
            \end{equation}
        \end{claim}
            We have
            \begin{equation}
                \label{eq: gamma-limsup, VG^2, sum}
                \Dsp(u_j, \tilde{B}_{\veryGoodJ}^2 \cap \diagonalDelta) \leq \sum_{i \in \veryGoodJ} |u_j|^p_{\Wsp(\ballBoundingAnnulusJI)} + \sum_{\substack{i,k \in \veryGoodJ \\ i \neq k}} \Dsp(u_j, (\ballBoundingAnnulusJI \times \ballBoundingAnnulusJK) \cap \diagonalDelta).
            \end{equation}
            We now start by estimating the first term on the right hand side of \eqref{eq: gamma-limsup, VG^2, sum}. To this end let $x,y \in \ballBoundingAnnulusJI$; by definition of $u_j$, we have 
            \begin{equation*}
                |u_j(x)-u_j(y)| =  \lvert(u)_{\annulusJI}\rvert\left\lvert\xi_\joiningLemmaEnergyDiffParam\left(\frac{x-\eps_j x_i}{\lambda_j \rho_i}\right) - \xi_\joiningLemmaEnergyDiffParam\left(\frac{y-\eps_j x_i}{\lambda_j \rho_i}\right) \right\rvert
            \end{equation*}
            and hence, by a change of variables, 
            estimate \eqref{e:cap1} in Lemma~\ref{l:loccap},
            Proposition~\ref{lmm: integral approximation G_j,R}, and \eqref{e:xiL}
            \begin{align}\label{e:claim523 1}
                    \limsup_{j\to\infty}&
                    \sum_{i \in \veryGoodJ} |u_j|^p_{\Wsp(\ballBoundingAnnulusJI)} \leq \limsup_{j\to\infty}
                    \eps_j^n \sum_{i \in \goodJ} \rho_i^{n-sp} |(u)_{\annulusJI}|^p |\xi_\joiningLemmaEnergyDiffParam|^p_{\Wsp(\Rn)}\notag \\
                    &\leq \limsup_{j\to\infty}
                    \left(\CSP(B_1, B_\joiningLemmaEnergyDiffParam) + \frac{1}{\joiningLemmaEnergyDiffParam}\right) \eps_j^n \sum_{i \in \goodJ} \rho_i^{n-sp} |(u)_{\annulusJI}|^p \notag\\
                    &\leq \csp(B_1) \IG \norm{u}_{L^p(\domain)}^p \int_{\R_+} \rho^{n-sp} \, \dd{\MD}(\rho) +\errordRL= \capacityConstantErgodic\norm{u}_{L^p(\domain)}^p+\errordRL.
            \end{align}            
Then, it remains to estimate the second term in \eqref{eq: gamma-limsup, VG^2, sum}. For $(x,y) \in (\ballBoundingAnnulusJI \times \ballBoundingAnnulusJK) \cap \diagonalDelta$, with $i,k \in \veryGoodJ$ and $i\neq k$, we have 
            \begin{multline}
                \label{eq: gamma-limsup, VG^2, different balls}
                |u_j(x) - u_j(y)|^p \lesssim_p |(u)_{\annulusJI} - (u)_{\annulusJK}|^p \\+ \left|\xi_\joiningLemmaEnergyDiffParam\left(\frac{x-\eps_j x_i}{\lambda_j \rho_i}\right)\right|^p |(u)_{\annulusJI}|^p + \left|\xi_\joiningLemmaEnergyDiffParam \left(\frac{y-\eps_jx_k}{\lambda_j \rho_k}\right)\right|^p |(u)_{\annulusJK}|^p. 
            \end{multline}
            Moreover, since $i,k \in \veryGoodJ \subset \goodJ$, by the triangle inequality, for sufficiently large $j$, we get
            \begin{equation*}
                2 \frac{\eps_j}{\truncParam} \leq \eps_j|x_i - x_k| \leq \frac{\eps_j}{\truncParam} + \delta < 2 \delta,
            \end{equation*}
            and
            \begin{equation}
                \label{eq: gamma-limsup, VG^2, relation between |x-y| and |x_i - x_k|}
                \frac{1}{2} \eps_j |x_i - x_k| \leq |x-y| \leq \frac{3}{2} \eps_j |x_i - x_k|.
            \end{equation}
            Then, recalling that $u \in \lip(\domain)$, we obtain
            \begin{equation*}
                |(u)_{\annulusJI} - (u)_{\annulusJK}|^p \lesssim_p \lip(u)^p \eps_j^p |x_i - x_k|^p
            \end{equation*}
            and 
            \begin{equation*}
                \begin{split}
                    &\sum_{\substack{i,k \in \veryGoodJ \\ i \neq k}}\int_{(\ballBoundingAnnulusJI \times \ballBoundingAnnulusJK)\cap \diagonalDelta} \frac{|(u)_{\annulusJI} - (u)_{\annulusJK}|^p}{|x-y|^{n+sp}} \, \dd{x} \, \dd{y} \\
                    &\lesssim_{n,p} \lip(u)^p \eps_j^{-n-(s-1)p}  \truncParam^{-2n} \eps_j^{2n} \sum_{\substack{i,k \in \veryGoodJ \\ i \neq k, \, \eps_j|x_i - x_k| < 2 \delta}}|x_i - x_k|^{-n-(s-1)p}.
                \end{split}
            \end{equation*}
            By a counting argument (see also \cite[(2.7)]{focardi-aperiodic-fractional}), it is easy to see that for every $i\in \goodJ$ we have
            \begin{equation*}
                \#\left\{ k \in \goodJ : k \neq i, \frac{h}{\truncParam} < |x_i - x_k| \leq \frac{h+1}{\truncParam} \right\} \lesssim_n h^{n-1}.
            \end{equation*}
            If $n+(s-1)p \geq 0$, we can estimate the above sum using the fact that $\sum_{h=2}^{l} h^{-(1+a)} \leq \frac{l^{-a}}{-a}$ for any $l \in \N$ and $a<0$. Namely, we have
            \begin{equation*}
                \begin{split}
                    &\sum_{\substack{i,k \in \veryGoodJ \\ i \neq k, \, \eps_j|x_i - x_k| < 2 \delta}}|x_i - x_k|^{-n-(s-1)p} = \sum_{i \in \veryGoodJ} \sum_{\substack{k \in \veryGoodJ \\ k\neq i, \, \eps_j|x_i - x_k| < 2\delta}}|x_i - x_k|^{-n-(s-1)p} \\
                    &= \sum_{i \in \veryGoodJ} \sum_{h=2}^{\lfloor 2 \delta \truncParam / \eps_j \rfloor} \sum_{\substack{k \in \veryGoodJ \\ h < \truncParam |x_i - x_k| \leq h+1 }} |x_i - x_k|^{-n-(s-1)p} \\
                    &\lesssim_n \truncParam^{n+(s-1)p} \sum_{i \in \veryGoodJ} \sum_{h=2}^{\lfloor 2 \delta \truncParam / \eps_j \rfloor} h^{-(1+(s-1)p)} \lesssim_{n,s,p} \truncParam^{n+(s-1)p} \#\veryGoodJ \delta^{(1-s)p} \truncParam^{(1-s)p} \eps_j^{(s-1)p}.
                \end{split}
            \end{equation*}
            Since $\{B_\frac{\eps_j}{\truncParam}(\eps_j x_i)\}_{i \in VG_{j, \truncParam}}$ are pairwise disjoint, we have $\#\veryGoodJ \lesssim_n \L^n(\domain) (\frac{\eps_j}{\truncParam})^{-n}$, which gives 
            \begin{equation*}
                \sum_{\substack{i,k \in \veryGoodJ \\ i \neq k, \, \eps_j|x_i - x_k| < 2 \delta}}|x_i - x_k|^{-n-(s-1)p} \lesssim_{n,s,p} \truncParam^{2n} \eps_j^{-n+(s-1)p} \delta^{(1-s)p}
            \end{equation*}
            and 
            \begin{equation}\label{e:claim523 2}
                \sum_{\substack{i,k \in \veryGoodJ \\ i \neq k}}\int_{(\ballBoundingAnnulusJI \times \ballBoundingAnnulusJK)\cap \diagonalDelta} \frac{|(u)_{\annulusJI} - (u)_{\annulusJK}|^p}{|x-y|^{n+sp}} \, \dd{x} \, \dd{y} 
                \lesssim_{n,s,p} \lip(u)^p 
                \delta^{(1-s)p}\,. 
            \end{equation}
            If, on the other hand, $n+(s-1)p < 0$, we get both
            \begin{equation*}
                \begin{split}
                    &\sum_{\substack{i,k \in \veryGoodJ \\ i \neq k, \, \eps_j|x_i - x_k| < 2 \delta}}|x_i - x_k|^{-n-(s-1)p} \\
                    &\leq \#\veryGoodJ^2 \left(\frac{2\delta}{\eps_j}\right)^{-n-(s-1)p} \lesssim_{n,s,p} \truncParam^{2n} \eps_j^{-2n} \eps_j^{n+(s-1)p} \delta^{-n-(s-1)p}
                \end{split}
            \end{equation*}
            and 
            \begin{equation}\label{e:claim523 3}
                \sum_{\substack{i,k \in \veryGoodJ \\ i \neq k}}\int_{(\ballBoundingAnnulusJI \times \ballBoundingAnnulusJK)\cap \diagonalDelta} \frac{|(u)_{\annulusJI} - (u)_{\annulusJK}|^p}{|x-y|^{n+sp}} \, \dd{x} \, \dd{y} 
                \lesssim_{n,s,p} \lip(u)^p 
                \delta^{-n-(s-1)p}\,. 
            \end{equation}
            We now deal with the second term in \eqref{eq: gamma-limsup, VG^2, different balls} (the third one being analogous). By \eqref{eq: gamma-limsup, VG^2, relation between |x-y| and |x_i - x_k|} a change of variables gives 
            \begin{equation*}
                \begin{split}
                    &\int_{\ballBoundingAnnulusJI \times \ballBoundingAnnulusJK} \frac{|\xi_\joiningLemmaEnergyDiffParam(\lambda_j^{-1}\rho_i^{-1}(x-\eps_jx_i))|^p}{|x-y|^{n+sp}} \, \dd{x} \, \dd{y} \\
                    &= \int_{\ballBoundingAnnulusJI} |\xi_\joiningLemmaEnergyDiffParam(\lambda_j^{-1}\rho_i^{-1}(x-\eps_jx_i))|^p \left(\int_{\ballBoundingAnnulusJK} |x-y|^{-(n+sp)} \, \dd{y} \right) \, \dd{x} \\
                    &\lesssim_n \truncParam^{-n} \eps_j^n \eps_j^{-n-sp} |x_i - x_k|^{-n-sp} \int_{B_{\joiningLemmaEnergyDiffParam \lambda_j \rho_i} (\eps_j x_i)} |\xi_\joiningLemmaEnergyDiffParam(\lambda_j^{-1}\rho_i^{-1}(x-\eps_jx_i))|^p \, \dd{x} \\
                    &= \truncParam^{-n} \eps_j^{-sp} |x_i - x_k|^{-n-sp} (\lambda_j \rho_i)^n \int_{B_\joiningLemmaEnergyDiffParam} |\xi_\joiningLemmaEnergyDiffParam|^p\,,
                \end{split}
            \end{equation*}
            and, as a consequence, 
            \begin{equation}
                \label{eq: gamma-limsup norm p of xi_L}
                \begin{split}
                    &\limsup_{j\to\infty}\sum_{\substack{i,k \in \veryGoodJ \\ i \neq k}}\int_{(\ballBoundingAnnulusJI \times \ballBoundingAnnulusJK) \cap \diagonalDelta} \frac{|\xi_\joiningLemmaEnergyDiffParam(\lambda_j^{-1}\rho_i^{-1}(x-\eps_jx_i))|^p}{|x-y|^{n+sp}} \, \dd{x} \, \dd{y} \\
                    &\leq \limsup_{j\to\infty} \norm{\xi_\joiningLemmaEnergyDiffParam}_{L^p(B_\joiningLemmaEnergyDiffParam)}^p  \lambda_j^n  \eps_j^{-sp} \sum_{\substack{i,k \in \veryGoodJ \\ i \neq k, \, \eps_j|x_i - x_k| < 2 \delta}} |x_i - x_k|^{-n-sp} \\
                    &= \limsup_{j\to\infty}\norm{\xi_\joiningLemmaEnergyDiffParam}_{L^p(B_\joiningLemmaEnergyDiffParam)}^p  \lambda_j^n  \eps_j^{-sp}\sum_{i \in \veryGoodJ} \sum_{h=2}^{\lfloor 2 \delta \truncParam / \eps_j \rfloor} \sum_{\substack{k \in \veryGoodJ \\ h < \truncParam |x_i - x_k| \leq h+1 }}  |x_i - x_k|^{-n-sp} \\
                    &\lesssim_n \limsup_{j\to\infty} \norm{\xi_\joiningLemmaEnergyDiffParam}_{L^p(B_\joiningLemmaEnergyDiffParam)}^p  \lambda_j^n  \eps_j^{-sp} \truncParam^{n+sp} \#\veryGoodJ \sum_{h=2}^{\infty} h^{-n-sp} h^{n-1} \\
                    &\lesssim_n \limsup_{j\to\infty} \norm{\xi_\joiningLemmaEnergyDiffParam}_{L^p(B_\joiningLemmaEnergyDiffParam)}^p  \lambda_j^n  \eps_j^{-sp} \truncParam^{2n+sp} \eps_j^{-n}\sum_{h=2}^{\infty} h^{-1-sp} \lesssim  \norm{\xi_\joiningLemmaEnergyDiffParam}_{L^p(B_\joiningLemmaEnergyDiffParam)}^p \truncParam^{2n+sp}\lim_{j\to\infty} \eps_j^\frac{(sp)^2}{n-sp} = 0.
                \end{split} 
            \end{equation}
            Then \eqref{claim:main contribution} follows at once from \eqref{e:claim523 1}, \eqref{e:claim523 2}, \eqref{e:claim523 3}, and the very last estimate.
    
        \begin{claim} We prove that
            \begin{equation}\label{claim:2nd term}
                \limsup_{j\to\infty}\Dsp(u_j, (\tilde{B}_{\veryGoodJ} \times D_{\notVeryGoodJ})\cap \diagonalDelta)  \leq \errordRL.
            \end{equation}                
        \end{claim}
            Since $\{\ballBoundingAnnulusJI\}_{i \in \veryGoodJ}$ are pairwise disjoint, we deduce 
            \[
            \Dsp(u_j, (\tilde{B}_{\veryGoodJ} \times D_{\notVeryGoodJ})\cap \diagonalDelta) = \sum_{i \in \veryGoodJ} \Dsp(u_j, (\ballBoundingAnnulusJI \times D_{\notVeryGoodJ})\cap \diagonalDelta).
            \]
            Moreover, for every $(x,y) \in (\ballBoundingAnnulusJI \times D_{\notVeryGoodJ})\cap \diagonalDelta$ we have
            \begin{equation*}
                \begin{split}
                    &u_j(x)-u_j(y) = \left(1-\xi_\joiningLemmaEnergyDiffParam\left(\frac{x-\eps_jx_i}{\lambda_j \rho_i}\right)\right) (u)_{\annulusJI} - w_j(y)\phi_{j,\truncParam}(y) \\
                    &=(u)_{\annulusJI} - w_j(x) - (u)_{\annulusJI} \xi_\joiningLemmaEnergyDiffParam\left(\frac{x-\eps_jx_i}{\lambda_j\rho_i}\right) + w_j(x) - w_j(y) + w_j(y) (1-\phi_{j,\truncParam}(y))
                \end{split}
            \end{equation*}
            and hence 
            \begin{equation}
                \label{eq: gamma-limsup, VG x NVG, terms to estimate}
                \begin{split}
                    |u_j(x)-u_j(y)|^p &\lesssim_p |w_j(x) - (u)_{\annulusJI}|^p + \norm{u}_{L^\infty(\domain)}^p \left\lvert \xi_\joiningLemmaEnergyDiffParam\left(\frac{x-\eps_j x_i}{\lambda_j \rho_i}\right) \right\rvert^p \\&+ |w_j(x)-w_j(y)|^p + \norm{u}_{L^\infty(\domain)}^p |1-\phi_{j,\truncParam}(y)|^p.
                \end{split}
            \end{equation}
            Then the estimate in \eqref{e:stima Adams} and the fact that $w_j\equiv (u)_{\annulusJI}$ on $\annulusJI$ by definition give for sufficiently large $j \in \N$,
            \begin{equation}
                \label{eq: gamma-limsup, VG x NVG, w_j - (u)}
                \begin{split}
                    \sum_{i \in \veryGoodJ}& \int_{(\ballBoundingAnnulusJI \times D_{\notVeryGoodJ}) \cap \diagonalDelta} \frac{|w_j(x) - (u)_{\annulusJI}|^p}{|x-y|^{n+sp}} \, \dd{x} \, \dd{y} \\
                    &\lesssim_{n,s,p} \sum_{i \in \veryGoodJ} 
                   \int_{B_{2^{-3h_j-2}\eps_jR^{-1}}(\eps_jx_i)}\frac{|w_j(x) - (u)_{\annulusJI}|^p}{\dist^{sp}(x,\partial\ballBoundingAnnulusJI)} \, \dd{x} \\ 
                  &\lesssim_{n,s,p}\Big(\frac{2^{3h_j+2}R}{\eps_j}\Big)^{sp} \sum_{i \in \veryGoodJ} \int_{B_{2^{-3h_j-2}\eps_jR^{-1}}(\eps_jx_i)}|w_j(x) - (u)_{\annulusJI}|^p \, \dd{x} \\ 
                    &\lesssim_{n,s,p} \sum_{i \in \veryGoodJ} |w_j|^p_{\Wsp(\ballBoundingAnnulusJI)} \leq \Dsp(w_j, (\domain\times \domain) \cap \diagonalDelta),
                \end{split}
            \end{equation}
             where in the last but one inequality we have used the scaled
            Poincaré-Wirtinger Inequality \cite[(2.10) in Remark~2.7]{focardi-aperiodic-fractional}.
            Note that the last term in the above inequality \eqref{eq: gamma-limsup, VG x NVG, w_j - (u)} has been already estimated in \eqref{eq: gamma-limsup, locality defect of u_j and w_j close to the diagonal}.
            
            Concerning the second term in \eqref{eq: gamma-limsup, VG x NVG, terms to estimate}, since $i\in \veryGoodJ$, a change of variables yields   
            \begin{equation}
                \label{eq: gamma-limsup, VG x NVG, xi_L}
                \begin{split}
                    &\sum_{i \in \veryGoodJ} \int_{(\ballBoundingAnnulusJI \times D_{\notVeryGoodJ}) \cap \diagonalDelta} |x-y|^{-(n+sp)} \left\lvert \xi_\joiningLemmaEnergyDiffParam\left(\frac{x-\eps_jx_i}{\lambda_j \rho_i}\right) \right\rvert^p \, \dd{x} \, \dd{y} \\
                    &\leq \sum_{i \in \veryGoodJ} \int_{\ballBoundingAnnulusJI \times (\domain \setminus \ballBoundingAnnulusJI)} |x-y|^{-(n+sp)} \left\lvert \xi_\joiningLemmaEnergyDiffParam\left(\frac{x-\eps_jx_i}{\lambda_j \rho_i}\right) \right\rvert^p \, \dd{x} \, \dd{y} \\
                    &\leq\sum_{i \in \veryGoodJ} (\lambda_j \rho_i)^{n-sp} \Dsp(\xi_\joiningLemmaEnergyDiffParam, B_{2^{-3h_j -1} \eps_j \truncParam^{-1} \lambda_j^{-1} \rho_i^{-1}} \times (\R^n \setminus B_{2^{-3h_j -1} \eps_j \truncParam^{-1} \lambda_j^{-1} \rho_i^{-1}})).
                \end{split}
            \end{equation}
            In the last inequality we have used that, 
            since $\rho_i \leq \truncParam$ for any $i \in \veryGoodJ$, $2^{-3h_j -1} \eps_j \truncParam^{-1} \lambda_j^{-1} \rho_i^{-1} \geq 2^{-3\joiningLemmaEnergyDiffParam -1}\truncParam^{-2}  \eps_j \lambda_j^{-1} \geq \joiningLemmaEnergyDiffParam^2 > \joiningLemmaEnergyDiffParam$ for sufficiently large $j \in \N$, and that   $\xi_\joiningLemmaEnergyDiffParam = 0$ on $\R^n \setminus B_\joiningLemmaEnergyDiffParam$.
            Therefore, by Lemma~\ref{lmm: limit of eps^n sum rho_i^n-sp 1_(R,infty)(rho_i)} and by 
             \eqref{eq: locality defect on B(r) x B(r)^c} in Lemma~\ref{l:loccap}
            we have
            \begin{align}\label{e:claim524 2}
                \limsup_{j \to \infty}& \sum_{i \in \veryGoodJ} \int_{(\ballBoundingAnnulusJI \times D_{\notVeryGoodJ}) \cap \diagonalDelta} |x-y|^{-(n+sp)} \left\lvert \xi_\joiningLemmaEnergyDiffParam\left(\frac{x-\eps_jx_i}{\lambda_j \rho_i}\right) \right\rvert^p \, \dd{x} \, \dd{y} \notag \\
                &\leq 
                \Dsp(\xi_\joiningLemmaEnergyDiffParam, B_{\joiningLemmaEnergyDiffParam^2} \times (\Rn \setminus B_{\joiningLemmaEnergyDiffParam^2})) \lim_{j \to \infty}  \eps_j^n \sum_{i \in \insideDomainJ} \rho_i^{n-sp} \notag\\
                &= \Dsp(\xi_\joiningLemmaEnergyDiffParam, B_{\joiningLemmaEnergyDiffParam^2} \times (\Rn \setminus B_{\joiningLemmaEnergyDiffParam^2})) \IG \L^n(\domain) \int_{\R_+} \rho^{n-sp} \, \dd{\MD}(\rho) \leq\errordRL.
            \end{align}
            The third term in \eqref{eq: gamma-limsup, VG x NVG, terms to estimate} gives us
            \begin{equation}
                \label{eq: gamma-limsup, VG x NVG, w_j(x) - w_j(y)}
                \Dsp(w_j, (\tilde{B}_{\veryGoodJ} \times D_{\notVeryGoodJ}) \cap \diagonalDelta) \leq \Dsp(w_j, (\domain \times \domain) \cap \diagonalDelta),
            \end{equation}
            which was already estimated in \eqref{eq: gamma-limsup, locality defect of u_j and w_j close to the diagonal}. Regarding the last term in \eqref{eq: gamma-limsup, VG x NVG, terms to estimate}, since $\phi_{j,\truncParam} = 1$ on $\Rn \setminus D_{\notVeryGoodJ}$, by Lemma~\ref{lmm: NVG have vanishing relative capacity} and \eqref{e:phijR}, we have
            \begin{equation}
                \label{eq: gamma-limsup, VG x NVG, phi_j,R}
                \begin{split}
                    &\limsup_{j\to\infty}\sum_{i \in \veryGoodJ} \int_{(\ballBoundingAnnulusJI \times D_{\notVeryGoodJ}) \cap \diagonalDelta} \frac{|1-\phi_{j,\truncParam}(y)|^p}{|x-y|^{n-sp}} \, \dd{x} \, \dd{y} \\
                    &=\limsup_{j\to\infty}
                    \sum_{i \in \veryGoodJ} \int_{(\ballBoundingAnnulusJI \times D_{\notVeryGoodJ}) \cap \diagonalDelta} \frac{|(1-\phi_{j,\truncParam}(x))-(1-\phi_{j,\truncParam}(y))|^p}{|x-y|^{n-sp}} \, \dd{x} \, \dd{y} \\
                    &\leq\limsup_{j\to\infty}|\phi_{j,\truncParam}|^p_{\Wsp(\Rn)} =0.
                \end{split}
            \end{equation}
            Then \eqref{claim:2nd term} follows by gathering
            \eqref{eq: gamma-limsup, VG x NVG, w_j - (u)},
            \eqref{e:claim524 2},
            \eqref{eq: gamma-limsup, VG x NVG, w_j(x) - w_j(y)}, and 
            \eqref{eq: gamma-limsup, VG x NVG, phi_j,R}.
        
            \begin{claim} We have
            \begin{equation}\label{claim:3rd term}
                \limsup_{j\to\infty}\Dsp(u_j, D_{\notVeryGoodJ}^2 \cap \diagonalDelta) \leq\errordRL.
            \end{equation}
        \end{claim}
    
            Since $u_j= u \phi_{j,\truncParam}$ on $D_{\notVeryGoodJ}^2 \cap \diagonalDelta$, we obtain,
            \begin{equation*}
                \begin{split}
                    \Dsp(u_j, D_{\notVeryGoodJ}^2 \cap \diagonalDelta) &\lesssim_p \norm{u}_{L^\infty(\domain)}^p \Dsp(\phi_{j,\truncParam}, D_{\notVeryGoodJ}^2 \cap \diagonalDelta) + \Dsp(u,D_{\notVeryGoodJ}^2 \cap \diagonalDelta) \\
                    &\leq \norm{u}_{L^\infty(U)}^p |\phi_{j,\truncParam}|^p_{\Wsp(\Rn)} + \Dsp(u,U^2 \cap \diagonalDelta).
                \end{split}
            \end{equation*}
            Then \eqref{claim:3rd term} follows by Lemma~\ref{lmm: NVG have vanishing relative capacity} (cf. also \eqref{e:phijR}).
    
        \begin{claim} We show that
            \begin{equation}\label{claim:4th term}
                \limsup_{j\to\infty}\Dsp(u_j, (\tilde{B}_{\veryGoodJ} \times \domain_j) \cap \diagonalDelta)  \leq\errordRL\,.
            \end{equation}
        \end{claim}
    
            For $(x,y) \in (\tilde{B}_{\veryGoodJ} \times \domain_j) \cap \diagonalDelta$, we have 
            \begin{equation*}
                \begin{split}
                    &u_j(x) - u_j(y) = \left(1-\xi_\joiningLemmaEnergyDiffParam\left(\frac{x-\eps_j x_i}{\lambda_j \rho_i}\right)\right) (u)_{\annulusJI} - w_j(y) \\
                    &=(u)_{\annulusJI} - w_j(x) - (u)_{\annulusJI} \xi_\joiningLemmaEnergyDiffParam\left(\frac{x-\eps_j x_i}{\lambda_j \rho_i}\right) + w_j(x) - w_j(y),
                \end{split}
            \end{equation*}
            and hence
            \begin{equation*}
                |u_j(x) - u_j(y)|^p \lesssim_p |w_j(x) - (u)_{\annulusJI}|^p + \norm{u}_{L^\infty(\domain)}^p \left\lvert \xi_\joiningLemmaEnergyDiffParam\left(\frac{x-\eps_j x_i}{\lambda_j \rho_i}\right) \right\rvert^p + |w_j(x)-w_j(y)|^p
            \end{equation*}
            We then observe that the terms above give contributions which were already estimated in \eqref{eq: gamma-limsup, VG x NVG, w_j - (u)}, \eqref{eq: gamma-limsup, VG x NVG, xi_L}, and \eqref{eq: gamma-limsup, VG x NVG, w_j(x) - w_j(y)}, respectively.
        \begin{claim} We prove that
            \begin{equation}\label{claim:5th term}
                \limsup_{j\to\infty}\Dsp(u_j, (D_{\notVeryGoodJ} \times \domain_j) \cap \diagonalDelta) \leq\errordRL\,.
            \end{equation}
        \end{claim}
            For $(x,y) \in ( D_{\notVeryGoodJ} \times \domain_j) \cap \diagonalDelta$,
            \begin{equation*}
                u_j(x) - u_j(y) = w_j(x) \phi_{j,\truncParam}(x) - w_j(y) = w_j(x) - w_j(y) - w_j(x) (1-\phi_{j,\truncParam}(x))
            \end{equation*}  
            and hence 
            \begin{equation*}
                |u_j(x) - u_j(y)|^p \lesssim_p |w_j(x) - w_j(y)|^p + \norm{u}_{L^\infty(\domain)}^p |1-\phi_{j,\truncParam}(x)|^p.
            \end{equation*}
            The contributions of the above terms were already analyzed in \eqref{eq: gamma-limsup, VG x NVG, w_j(x) - w_j(y)} and \eqref{eq: gamma-limsup, VG x NVG, phi_j,R}, respectively.

        Therefore gathering \eqref{claim:main contribution}-\eqref{claim:5th term} we deduce
        the estimate in \eqref{e:claim522 2} and therefore 
        \eqref{e:approx-upper-bound} as previously noticed.
\end{proof}

We conclude this section with the proof of the $\Gamma$-convergence result in the case of randomly shaped obstacles and anisotropic, translation-invariant, $-(n+sp)$-homogeneous measurable kernels, stated in Theorem~\ref{thm: main result random shapes}. 
Since the arguments closely follow those in the proof Theorem~\ref{thm: main result}, mainly resting on Lemma~\ref{l:loccap}, we restrict ourselves to highlighting the necessary modifications. 

\begin{proof}[Proof of Theorem~\ref{thm: main result random shapes}]
    We start by observing that, thanks to \ref{h: geometric assumption on obstacle of random shape}, in the proof we can resort to the same sets of indices introduced in Section \ref{subsec:set-of-indices}, namely $\goodEps$ and $\veryGoodEps$ (see \eqref{eq: definition of good indices}, \eqref{eq: definition of very good indices}).

    For the $\Gamma$-$\liminf$ inequality, we proceed as in the proof of Proposition~\ref{prop: gamma-liminf}, up to the estimate of the capacitary contribution. Using the same notation therein, we get 
    \begin{equation*}
        \begin{split}
             &\sum_{i \in \goodJOmega} \mathcal{K}(w_j^\omega,\ballContainingAnnulusJIomega)  \geq \sum_{i \in \goodJOmega} |(u_j)_{\annulusJIomega}|^p \cspK(S_j^{i,\omega}, \ballBoundingAnnulusJIomega, m_\truncParam^{-3h_j^\omega -2} \eps_j \truncParam^{-1}) \\
             &= \sum_{i \in \goodJOmega} \lambda_j^{n-sp}|(u_j)_{\annulusJIomega}|^p \cspK(\lambda_j^{-1} (S_j^{i,\omega} - \eps_j x_i(\omega)), B_{m_\truncParam^{-3h_j^\omega-1}\eps_j \lambda_j^{-1}\truncParam^{-1}}, m_\truncParam^{-3h_j^\omega -2} \eps_j \lambda_j^{-1}\truncParam^{-1}). 
        \end{split}
     \end{equation*}
     In view of assumption \ref{h: geometric assumption on obstacle of random shape},  $\lambda_j^{-1} (S_j^{i,\omega} - \eps_j x_i(\omega)) \subset B_{\rho_i(\omega)} \subset B_\truncParam$  for every $i \in \goodJOmega$.
     Therefore, arguing as in the proof of Proposition~\ref{prop: gamma-liminf}, we get, in view of \eqref{e:cap2} in Lemma~\ref{l:loccap}, 
     \begin{equation*}
        \begin{split}
            \sum_{i \in \goodJOmega} \mathcal{K}(w_j^\omega,\ballContainingAnnulusJIomega) &\geq \sum_{i \in \goodJOmega}|(u_j)_{\annulusJIomega}|^p \cspK(S_j^{i,\omega}) \\
            &- \frac{c_{n,s,p}}{(m_\truncParam-1)^{sp}} \CSPK(B_\truncParam,B_{m_\truncParam^{-3h_j^\omega -2} \eps_j \lambda_j^{-1}\truncParam^{-1}}) \eps_j^n \sum_{i \in \goodJOmega} |(u_j)_{\annulusJIomega}|^p. 
        \end{split}
     \end{equation*}
    Then, invoking formula \eqref{e:expectation gammai vs pi} and
    Remark~\ref{rmk:random_shapes}, we may conclude that
    \begin{equation*}
        \begin{split}
            &\lim_{\truncParam \to \infty}\lim_{\joiningLemmaEnergyDiffParam \to \infty}\liminf_{j \to \infty} \sum_{i \in \goodJOmega} \mathcal{K}(w_j^\omega,\ballContainingAnnulusJIomega) \geq  \lim_{\truncParam \to \infty}\lim_{\joiningLemmaEnergyDiffParam \to \infty}\liminf_{j \to \infty} \eps_j^n \sum_{i \in \goodJOmega}|(u_j)_{\annulusJIomega}|^p  \gamma_i(\omega) \\
            &= \norm{u}_{L^p(\domain)}^p \E\bigg[\sum_{x_i\in Q} \gamma_i \bigg] =  \tilde\gamma\norm{u}_{L^p(\domain)}^p.
        \end{split}
    \end{equation*}

    For the $\Gamma$-$\limsup$ inequality, we proceed as in the proof of Proposition~\ref{prop: gamma-limsup} with the following changes.
    For any $i \in \veryGoodJ$, we let $\xi_{\joiningLemmaEnergyDiffParam,i,j} \in W^{s,p}(\Rn; [0,1])$ be such that $\xi_{\joiningLemmaEnergyDiffParam,i,j} = 0$ on $\R^n \setminus B_\joiningLemmaEnergyDiffParam$,  $\tilde{\xi}_{\joiningLemmaEnergyDiffParam,i,j}= 1$ $\cspqe$ on $(\lambda_j \rho_i)^{-1} (S_j^i - \eps_j x_i) \subset B_1$, and 
    \begin{equation}
        \mathcal{K}(\xi_{\joiningLemmaEnergyDiffParam,i,j},\Rn) \leq \CSPK((\lambda_j \rho_i)^{-1} (S_j^i - \eps_j x_i), B_\joiningLemmaEnergyDiffParam) + \frac{1}{\joiningLemmaEnergyDiffParam}.
    \end{equation}
    Using the same notation as in the proof of Proposition~\ref{prop: gamma-limsup}, by \eqref{e:cap1} in Lemma~\ref{l:loccap} we get 
    \begin{equation}
        \label{eq: gamma-limsup capacity estimate random shape}
        \begin{split}
            \CSPK((\lambda_j \rho_i)^{-1} (S_j^i - \eps_j x_i), B_\joiningLemmaEnergyDiffParam)  &\leq \cspK((\lambda_j \rho_i)^{-1} (S_j^i - \eps_j x_i)) + \errordRL \\
            &= (\lambda_j \rho_i)^{sp-n} \cspK(S_j^i) + \errordRL = \rho_i^{sp-n} \gamma_i + \errordRL.
        \end{split}
    \end{equation}
    Recalling that $\ballBoundingAnnulusJI=B_{2^{-3h_j-1}\eps_jR^{-1}}(\eps_jx_i)$  (cf. \eqref{eq: definition of ball bounding annulus}), we define 
    \begin{equation*}
        v_j(x):= \begin{cases}
            w_j(x) \quad &\text{if } x \in \domain\setminus \bigcup\limits_{i \in \veryGoodJ} \ballBoundingAnnulusJI, \\
            \left(1-\xi_{\joiningLemmaEnergyDiffParam,i,j}\left(\frac{x-\eps_j x_i}{\lambda_j \rho_i}\right)\right) (u)_{\annulusJI} \quad &\text{if } x \in \bigcup\limits_{i \in \veryGoodJ} \ballBoundingAnnulusJI.
        \end{cases}
    \end{equation*}
    Using the functions $\phi_{j,\truncParam}$ introduced in the proof of Proposition~\ref{prop: gamma-limsup}, the definition of $u_j = v_j \phi_{j,\truncParam}$ remains unchanged, as well as its convergence properties. 
    
    To reconstruct the capacitary term, we argue as in \eqref{claim:main contribution}, with the following changes. By a change of variables and \eqref{eq: gamma-limsup capacity estimate random shape}, we have
    \begin{align}
        \limsup_{j\to\infty}&
        \sum_{i \in \veryGoodJ} \mathcal{K}(u_j,\ballBoundingAnnulusJI) 
        \leq \limsup_{j\to\infty}
        \eps_j^n \sum_{i \in \goodJ} \rho_i^{n-sp} |(u)_{\annulusJI}|^p \mathcal{K}(\xi_{\joiningLemmaEnergyDiffParam,i,j},\Rn)\notag \\
        &\leq \limsup_{j\to\infty}
        \eps_j^n \sum_{i \in \goodJ}  |(u)_{\annulusJI}|^p \gamma_i \notag\\
        &\leq \norm{u}_{L^p(\domain)}^p \E\bigg[\sum_{x_i\in Q} \gamma_i \bigg] +\errordRL =  \tilde\gamma\norm{u}_{L^p(\domain)}^p + \errordRL,
    \end{align}
    where the last inequality follows in view of formula
    \eqref{e:expectation gammai vs pi} and Remark~\ref{rmk:random_shapes}.
    
    Moreover, since $\xi_{\joiningLemmaEnergyDiffParam,i,j}$ takes values in $[0,1]$, $\norm{\xi_{\joiningLemmaEnergyDiffParam,i,j}}_{L^p(B_\joiningLemmaEnergyDiffParam)}^p \leq \L^n(B_\joiningLemmaEnergyDiffParam)$, and hence \eqref{eq: gamma-limsup norm p of xi_L} holds true.
Furthermore, in \eqref{e:claim524 2}, by the definition of $\xi_{\joiningLemmaEnergyDiffParam,i,j}$ and since $(\lambda_j \rho_i)^{-1} (S_j^i - \eps_j x_i) \subset B_1$, \eqref{e:Kernels}  and 
    \eqref{eq: locality defect on B(r) x B(r)^c} in Lemma~\ref{l:loccap} imply
    \begin{align}
        \limsup_{j \to \infty}& \sum_{i \in \veryGoodJ} \int_{(\ballBoundingAnnulusJI \times D_{\notVeryGoodJ}) \cap \diagonalDelta} |x-y|^{-(n+sp)} \left\lvert \xi_{\joiningLemmaEnergyDiffParam,i,j}\left(\frac{x-\eps_jx_i}{\lambda_j \rho_i}\right) \right\rvert^p \, \dd{x} \, \dd{y} \notag \\
        &\leq 
         \lim_{j \to \infty}  \eps_j^n \sum_{i \in \insideDomainJ} \Dsp(\xi_{\joiningLemmaEnergyDiffParam,i,j}, B_{\joiningLemmaEnergyDiffParam^2} \times (\Rn \setminus B_{\joiningLemmaEnergyDiffParam^2})) \rho_i^{n-sp} \notag\\
        &= \errordRL \, \IG \L^n(\domain) \int_{\R_+} \rho^{n-sp} \, \dd{\MD}(\rho) \leq\errordRL.
    \end{align}
    Hence, we conclude as in Proposition~\ref{prop: gamma-limsup}.
\end{proof}
\appendix
\section{}
\label{section: appendix}

For the readers' convenience, in this appendix we prove Proposition~\ref{prop: ergodic theorem mpp convergence of measures} and Proposition~\ref{prop: ergodic theorem, integral on eps^-1 U of h(rho)}.

\begin{proof}[Proof of Proposition~\ref{prop: ergodic theorem mpp convergence of measures}]
    The proof follows the ideas of \cite[B.1]{faggionato2020stochastichomogenizationamorphousmedia}. We recall that a sequence $\{A_j\}_{j \in \N}$ of bounded Borel sets in $\R^n$ is called a \textit{convex averaging sequence} \cite[Definition 12.2.I]{daley-vere-jones-2} if $A_j \subset A_{j+1}$, $A_j$ is convex for any $j \in \N$, and $r(A_j) \to \infty$ as $j\to\infty$, where $r(A)$ is the supremum of all radii $r\geq0$ such that $A$ contains a ball of radius $r$. By the Ergodic Theorem for stationary and ergodic m.p.p.\ \cite[Theorem~12.2.IV and Corollary~12.2.V]{daley-vere-jones-2}, for any convex averaging sequence $\{A_j\}_{j \in \N}$ in $\R^n$ and any $h \in L^1(\MD)$
    \begin{equation*}
        \lim_{j \to \infty} \frac{1}{\L^n(A_j)}\int_{A_j \times \R_+} h(\rho) \, \dd{\PP}(x,\rho) = \IG \int_{\R_+} h(\rho) \, \dd{\MD}(\rho) \quad \prob\text{-a.e.}
    \end{equation*}
        We now show that for any $h \in L^1(\MD)$ there exists $\sampleSpace_h \in \sigmaAlgebra$ with $\prob(\sampleSpace_h) = 1$ such that for any $\omega \in \sampleSpace_h$, if $A = \prod_{i=1}^n (a_i, b_i]$ with $a_i < b_i$ and $a_i, b_i \in \Q$ for $i=1,\ldots, n$, then 
        \begin{equation}
            \label{eq: proof of ergodic theorem convergence of measures, limit to prove}
            \lim_{t \to +\infty} \frac{1}{t^n} \int_{(tA)\times \R_+} h(\rho) \, \dd{\PP^\omega}(x,\rho) = \IG \L^n(A) \int_{\R_+} h(\rho) \, \dd{\MD}(\rho).
        \end{equation}
        To this end we define 
        \begin{equation*}
            F^\omega(t,A):= \frac{1}{t^n} \int_{(tA)\times \R_+} h(\rho) \, \dd{\PP^\omega}(x,\rho), \quad   G(A) :=\IG \L^n(A) \int_{\R_+} h(\rho) \, \dd{\MD}(\rho).
        \end{equation*}
        Without loss of generality, we assume that $h \geq 0$. If $A = \prod_{i=1}^n (0,b_i]$ with $b_i \geq 0$, then $\{j A \}_{j \in \N}$ is a convex averaging sequence and $\prob$-a.e. $\lim_{j \to \infty} F(j,A) = G(A)$.
        Moreover, for any $t \geq 0$, we have $\floor{t} \leq t < \floor{t}+1$ and 
        \begin{equation*}
            \frac{\floor{t}^n}{(\floor{t}+1)^n} F(\floor{t},A) \leq F(t,A) \leq \frac{(\floor{t}+1)^n}{\floor{t}^n} F(\floor{t}+1,A), 
        \end{equation*}
        which implies that $\prob$-a.e. $\lim_{t \to +\infty} F(t,A) = G(A)$. If \eqref{eq: proof of ergodic theorem convergence of measures, limit to prove} holds $\prob$-a.e. for $A = A_1$ and $A= A_2$ then it clearly holds $\prob$-a.e. for $A = A_1 \setminus A_2$. As a consequence, exploiting the previous case, by the argument contained in \cite[Proof of Lemma~B.1]{faggionato2020stochastichomogenizationamorphousmedia}, it is easy to see that \eqref{eq: proof of ergodic theorem convergence of measures, limit to prove} holds $\prob$-a.e. for $A = \prod_{i=1}^n (a_i,b_i]$ with $a_i < b_i$. So that we can conclude by observing that $\{\prod_{i=1}^n (a_i, b_i]: a_i < b_i, \, a_i,b_i \in \Q, \,  i = 1,\ldots, n\}$ is countable.
    
    Now let $\Omega_h$ be the event of probability one such that \eqref{eq: proof of ergodic theorem convergence of measures, limit to prove} holds, and let $\omega \in \Omega_h$. Moreover, let $f \in C_c^0(\Rn)$ and $\delta >0$. Since $f$ is uniformly continuous, there exists $\sigma >0$ such that $|f(x)-f(y)|<\delta$ whenever $|x-y|<\sigma$. Let $K:=\min\{k \in \N: \supp f \subset (-k,k]^n\}$. Then there are $A_1, \ldots, A_l$ pairwise disjoint sets of the kind described above such that $(-K,K]^n = \cup_{i=1}^l A_i$ and $\rm{diam}(A_i) < \sigma$ for any $i=1,\ldots, l$. Then, by construction, $\sum_{i=1}^l \inf_{A_i} f \, \IF_{A_i} \leq f \leq \sum_{i=1}^l \sup_{A_i} f \, \IF_{A_i}$ and hence
    \begin{equation*}
        \begin{split}
            &\sum_{i=1}^l \inf_{A_i} f \, \eps^n \int_{(\eps^{-1}A_i) \times \R_+} h(\rho) \, \dd{\PP^\omega}(x,\rho) \\
            &\leq \int_{\R^n\times\R_+} f(x) h(\rho) \, \dd{\PP_\eps^\omega}(x,\rho) \leq  \sum_{i=1}^l \sup_{A_i} f \, \eps^n \int_{(\eps^{-1}A_i) \times \R_+} h(\rho) \, \dd{\PP^\omega}(x,\rho).
        \end{split}
    \end{equation*}
    Then, by the choice of $\Omega_h$, we get
    \begin{equation*}
        \begin{split}
            &\IG \int_{\R_+} h(\rho) \, \dd{\MD}(\rho) \left(\int_{\R^n} f(x) \, \dd{x} - \delta (2K)^n \right) \leq \IG \int_{\R_+} h(\rho) \, \dd{\MD}(\rho) \sum_{i=1}^l \inf_{A_i} f \L^n(A_i) \\
            &\leq \liminf_{\eps \downarrow 0}\int_{\R^n\times\R_+} f(x) h(\rho) \, \dd{\PP_\eps^\omega}(x,\rho) \leq \limsup_{\eps \downarrow 0} \int_{\R^n\times\R_+} f(x) h(\rho) \, \dd{\PP_\eps^\omega}(x,\rho) \\
            &\leq \IG \int_{\R_+} h(\rho) \, \dd{\MD}(\rho) \sum_{i=1}^l \sup_{A_i} f \L^n(A_i) \leq \IG \int_{\R_+} h(\rho) \, \dd{\MD}(\rho) \left(\int_{\R^n} f(x) \, \dd{x} + \delta (2K)^n \right).
        \end{split}
    \end{equation*}
    Finally, we conclude by the arbitrariness of $\delta$.
\end{proof}
\begin{proof}[Proof of Proposition~\ref{prop: ergodic theorem, integral on eps^-1 U of h(rho)}] 
    Let $\Omega_h$ be the event of probability one given by Proposition~\ref{prop: ergodic theorem mpp convergence of measures} and let $\omega \in \Omega_h$. For $\delta >0$, let $\domain_\delta := \{x \in \Rn: \dist(x,\partial \domain) < \delta\}$ and $\domain_{-\delta}:= \{x \in \domain: \dist(x,\partial \domain) > \delta\}$. By the Urysohn Lemma~\cite[Proposition~8.18]{folland}, there exist $\varphi_\delta \in C^\infty_c(\Rn; [0,1])$ and $\varphi_{-\delta} \in C^\infty_c(\Rn; [0,1])$ such that $\varphi_\delta = 1$ on $\overline{\domain}$, $\supp \varphi_\delta \subset \domain_\delta$, $\varphi_{-\delta} = 1$ on $\overline{\domain_{-\delta}}$, $\supp \varphi_{-\delta} \subset \domain$. By construction, $\varphi_{-\delta} \leq \IF_\domain \leq \varphi_{\delta}$, thus we have 
    \begin{equation*}
        \begin{split}
            &\int_{\R^n \times \R_+} \varphi_{-\delta}(x) h(\rho) \, \dd{\PP_\eps^\omega}(x,\rho) \\
            &\leq \eps^n \int_{(\eps^{-1}U) \times \R_+} h(\rho) \, \dd{\PP^\omega}(x,\rho) \leq \int_{\R^n \times \R_+} \varphi_\delta(x) h(\rho) \, \dd{\PP_\eps^\omega}(x,\rho)
        \end{split}
    \end{equation*}
    while, by the choice of $\Omega_h$, there holds
    \begin{equation*}
        \begin{split}
            &\IG \int_{\R_+} h(\rho) \, \dd{\MD}(\rho) \L^n(U_{-\delta}) \leq \IG \int_{\R_+} h(\rho) \, \dd{\MD}(\rho) \int_{\R^n} \varphi_{-\delta}(x) \, \dd{x} \\
            &\leq \liminf_{\eps \downarrow 0} \eps^n \int_{(\eps^{-1}U) \times \R_+} h(\rho) \, \dd{\PP^\omega}(x,\rho) \leq \limsup_{\eps \downarrow 0} \eps^n \int_{(\eps^{-1}U) \times \R_+} h(\rho) \, \dd{\PP^\omega}(x,\rho) \\
            &\leq \IG \int_{\R_+} h(\rho) \, \dd{\MD}(\rho) \int_{\R^n} \varphi_{\delta}(x) \, \dd{x} \leq \IG \int_{\R_+} h(\rho) \, \dd{\MD}(\rho) \L^n(U_{\delta}).
        \end{split}
    \end{equation*}
    Since $\partial U$ is Lipschitz, we can conclude by taking the limit as $\delta \to 0$.
\end{proof}

\section*{Acknowledgements}  
F.~Deangelis and C.~I.~Zeppieri were funded by the Deutsche Forschungsgemeinschaft (DFG, German Research Foundation) under Germany's Excellence Strategy EXC 2044/2 - 390685587, Mathematics Münster: Dynamics--Geometry--Structure.

\bibliography{refs}
\bibliographystyle{alpha}
\end{document}